\documentclass[11pt]{article}

\usepackage{amsmath,amsthm,amssymb}
\usepackage[usenames,dvipsnames]{xcolor}
\usepackage{enumerate}
\usepackage{graphicx}
\usepackage{cite}
\usepackage{comment}
\usepackage[margin=1in]{geometry}

\usepackage[pdftitle={Closure of LQG},
  pdfauthor={Andres Contreras-Hip and Ewain Gwynne},
colorlinks=true,linkcolor=NavyBlue,urlcolor=RoyalBlue,citecolor=PineGreen,bookmarks=true,bookmarksopen=true,bookmarksopenlevel=2,unicode=true,linktocpage]{hyperref}

\setcounter{tocdepth}{2}

%%%  Definitions for theorem-like environments.
\theoremstyle{plain}
\newtheorem{thm}{Theorem}[section]
\newtheorem{theorem}[thm]{Theorem}
\newtheorem{cor}[thm]{Corollary}
\newtheorem{lemma}[thm]{Lemma}
\newtheorem{prop}[thm]{Proposition}

\def\@rst #1 #2other{#1}
\newcommand\MR[1]{\relax\ifhmode\unskip\spacefactor3000 \space\fi
  \MRhref{\expandafter\@rst #1 other}{#1}}
\newcommand{\MRhref}[2]{\href{http://www.ams.org/mathscinet-getitem?mr=#1}{MR#2}}

\theoremstyle{definition}
\newtheorem{defn}[thm]{Definition}

%Equation numbers by section
\numberwithin{equation}{section} 

\newcommand{\dsb}{\begin{adjustwidth}{2.5em}{0pt}
\begin{footnotesize}}
\newcommand{\dse}{\end{footnotesize}
\end{adjustwidth}}

\newcommand{\ssb}{\begin{adjustwidth}{2.5em}{0pt}}
\newcommand{\sse}{\end{adjustwidth}}

\newcommand{\aryb}{\begin{eqnarray*}}
\newcommand{\arye}{\end{eqnarray*}}
\def\alb#1\ale{\begin{align*}#1\end{align*}}
\def\allb#1\alle{\begin{align}#1\end{align}}
\newcommand{\eqb}{\begin{equation}}
\newcommand{\eqe}{\end{equation}}
\newcommand{\eqbn}{\begin{equation*}}
\newcommand{\eqen}{\end{equation*}}

\newcommand{\BB}{\mathbb}

\newcommand{\op}{\operatorname}

\newcommand{\ep}{\varepsilon}

\newcommand\p{\partial}
\newcommand\e{\varepsilon}

\newcommand\R{\mathbb{R}}
\newcommand\Z{\mathbb{Z}}
\newcommand\N{\mathbb{N}}

\newcommand\norm[1]{\lVert#1\rVert}

\newcommand\Diam{\mbox{Diam}}
\newcommand\vphi{\varphi}

%Remove spacing before \left and \right
\let\originalleft\left
\let\originalright\right
\renewcommand{\left}{\mathopen{}\mathclose\bgroup\originalleft}
\renewcommand{\right}{\aftergroup\egroup\originalright}

\title{A support theorem for exponential metrics of log-correlated Gaussian fields in arbitrary dimension}
 \date{ }
 \author{ 
\begin{tabular}{c} Andres A. Contreras Hip and Ewain Gwynne\\ \small University of Chicago \end{tabular}  
}

\begin{document}

\maketitle

\newcommand{\Cupper}{{\hyperref[eqn-bilip-def]{\mathfrak C_*}}}

\begin{abstract}
Let $h$ be a log-correlated Gaussian field on $\R^d$, let $\gamma \in (0,\sqrt{2d}),$ let $\mu_h$ be the $\gamma$-Gaussian multiplicative chaos measure, and let $D_h$ be an exponential metric associated with $h$ satisfying certain natural axioms. In the special case when $d=2$, this corresponds to the Liouville quantum gravity (LQG) measure and metric. We show that the closed support of the law of $(D_h,\mu_h)$ includes all length metrics and probability measures on $\R^d$. That is, if $\mathfrak d$ is any length metric on $\R^d$ and $\mathfrak m$ is any probability measure on $\R^d$, then with positive probability $(D_h , \mu_h)$ is close to $(\mathfrak d , \mathfrak m)$ with respect to the uniform distance and the Prokhorov distance. Key ingredients include a scaling limit theorem for a first passage percolation type model associated with $h$, a special version of the white noise decomposition of $h$ in arbitrary dimension, and an approximation property by conformally flat Riemannian metrics in the uniform sense. 
Our results provide a robust tool to show that the LQG measure and metric, and its higher dimensional analogs, satisfy certain properties with positive probability.  
\begin{comment}
This type of result has already been used in the two dimensional case, and so we expect our result to be used in the largely unexplored case of $d \geq 3.$ Given the level of generality of our result, one encounters difficulties which require new ingredients.
\end{comment}
\end{abstract}

\tableofcontents

%\begin{figure}[ht!]
%\begin{center}
%\includegraphics[width=0.75\textwidth]{Pic1b.pdf} 
%\caption{\label{fig1} Type caption here
%}
%\end{center}
%\vspace{-3ex}
%\end{figure}

\section{Introduction}

Liouville quantum gravity (LQG) is a family of random metric measure spaces parametrized by Riemann surfaces, depending on a parameter $\gamma \in (0,2)$, which describe the scaling limits of random planar maps. 
LQG was first introduced (non-rigourously) by Polyakov in \cite{polyakov-qg1} as a class of canonical models of random surfaces. 
One can define LQG surfaces with the topology of any desired orientable surface~\cite{wedges,dkrv-lqg-sphere,drv-torus,remy-annulus,grv-higher-genus}. However, in this paper we will only consider LQG surfaces with the topology of the whole plane.

The main results of this paper describe the closed support of the law of the LQG metric measure space. More precisely, we show that if $\mathfrak d$ is any length metric on $\R^2$ and $\mathfrak m$ is any probability measure on $\R^2$, then with positive probability $(D_h , \mu_h)$ is close to $(\mathfrak d , \mathfrak m)$ with respect to the uniform distance and the Prokhorov distance (Theorems~\ref{flat} and Corollary~\ref{measandmet}).
Our results provide a convenient ``black box" for whenever one needs to show that certain events for LQG hold with positive probability.

Several recent works (see, e.g.,~\cite{dgz-exponential-metric,dhks-discrete-to-cont,dhks-even-dim,cercle-higher-dimension}) have studied analogs of LQG in dimension $d\geq 3$. The results of this paper also extend to this higher-dimensional setting, conditional on some properties of the higher-dimensional analog of the LQG metric whose proofs have not yet been written down.  In this setting, our results raise important questions about what it means for a random Riemannian metric to be ``conformally flat"; see Section~\ref{sec:higher-dim} for more details. 

Roughly speaking, the aforementioned works on higher-dimensional analogs of LQG consider a conformally flat Riemannian metric tensor on $\mathbb R^d$, $d\geq 3$, of the form
\begin{equation}  \label{eqn:higher-dim-metric}
e^{\gamma h} (dx_1^2 + \dots + dx_d^2)  ,
\end{equation}
where $\gamma \in (0,\sqrt{2d})$ and $h$ is a log-correlated Gaussian field on $\mathbb R^d$, or some variant thereof (the works~\cite{dhks-even-dim,cercle-higher-dimension} also consider similar Riemannian metric tensors on $d$-manifolds for even values of $d$). 

As in the two-dimensional case, the Riemannian metric tensor~\eqref{eqn:higher-dim-metric} does not make literal sense since $h$ is a generalized function and is not pointwise defined. However, one can construct the volume form associated with~\eqref{eqn:higher-dim-metric} as a random measure on $\mathbb R^d$ via the general theory of Gaussian multiplicative chaos~\cite{kahane,rhodes-vargas-review,berestycki-gmt-elementary,bp-lqg-notes}. 

The recent paper~\cite{dgz-exponential-metric} proves the tightness of a natural approximation scheme for the Riemannian distance function associated with~\eqref{eqn:higher-dim-metric}. This paper also shows that every subsequential limit is a metric on $\mathbb R^d$ which induces the Euclidean topology. It is expected, but not yet proven, that the subsequential limit is unique and is characterized by a similar list of axioms as in the two-dimensional case~\cite{gm-uniqueness}; see Definition \ref{LQGlike}. Our results for $d\geq 3$ are proven conditional on these axioms.

\subsection{Measure and metric definitions}
\label{sec:lqg-def}
 
We will now discuss the definitions of the measure and metric associated with the log-correlated Gaussian field $h.$

\begin{defn} \label{def:gff}
The \textbf{log-correlated Gaussian field} is the centered Gaussian random generalized function $h^0$ defined on $\R^d$ up to additive constants, with covariance structure
\begin{equation} \label{eqn:cov}
\op{Cov}\left( h^0(z) , h^0(w) \right) = G(z,w) := \log\left(\frac{1}{|z-w|} \right) .
\end{equation} 
That is, if $f$ is a smooth compactly supported test function on ${\R^d}$ with $\int_{\p B_1(0)}f d\sigma =0,$ then $\int_{{\R^d}} f(z) h^0(z)\,dz$ is a centered Gaussian random variable with variance $\int_{\R^d} \int_{\R^d} f(z) f(w) \log\left(\frac{1}{|z-w|} \right)\,dz\,dw$.
We define the field $h$ to be the field $h^0$ normalized so that is unit sphere average is zero, i.e.,
\[
h = h^0 -  \int_{\p B_1(0)} h^0(z) d\sigma(z) .
\]
In dimension $d=2,$ the field $h$ is the whole-plane Gaussian free field (appropriately normalized) and has the explicit covariance structure given by
\begin{equation}
\op{Cov}\left( h(z) , h(w) \right)   := \log\left(\frac{\max\{|z|,1\} \max\{|w|,1\}}{|z-w|} \right) .
\end{equation} 
\end{defn}
For an exposition of the Gaussian free field in $d=2$, see, e.g., \cite{pw-gff-notes,bp-lqg-notes,shef-gff}. For an exposition of log-correlated Gaussian fields in general dimension, see, e.g., \cite{fgf-survey,lgf-survey}.

\begin{comment}
Let $\tilde{h} = h + f$, where $f : {\R^d}\to \R$ is a (possibly random and $h$-dependent) continuous function.
\end{comment}
Formally, for $\gamma \in (0,\sqrt{2d})$, the \textbf{$\gamma$-exponential geometry} associated with $h$ is described by the random Riemannian metric tensor
\begin{equation} \label{eqn:lqg-tensor}
e^{\gamma h} \, (dx_1^2+ \cdots + dx_d^2) , 
\end{equation}
where $dx_1^2+ \cdots + dx_d^2$ denotes the Euclidean metric tensor on ${\R^d}$. This metric tensor does not make literal sense since $h$ is not a random function but instead it is a random generalized function, and hence $e^{\gamma h}$ is not well defined. However, at least in dimension $2,$ we can still define a random metric and measure associated with~\eqref{eqn:lqg-tensor}. To do so, we first approximate $\tilde h$ with well chosen mollifications, then pass to the limit.

One possible construction in dimension $d=2$ is as follows. Let $p_t$ denote the heat kernel in dimension $2,$
\begin{equation}\label{heatkernel}
p_t(z):=\frac{1}{2\pi t}e^{-\frac{\vert z\vert^2}{2t}}.
\end{equation}
Define the mollified log-correlated Gaussian field
\[
h_\e(z) := p_{\e^2/2} \ast h(z) = \int_{\R^d} h(w) p_t(z-w) \,dw ,
\]
where the integral is interpreted in the distributional sense.
We can then define the \textbf{$\gamma$-LQG measure} as the a.s.\ weak limit
\begin{equation} \label{eqn:lqg-measure}
\lim_{\e\to 0} \e^{\gamma^2/2} e^{\gamma h_\e(z)} .
\end{equation}
This construction is a special case of the theory of \textbf{Gaussian multiplicative chaos}~\cite{kahane,rhodes-vargas-review,berestycki-gmt-elementary,bp-lqg-notes}. The measure $\mu_h$ is locally finite, non-atomic, assigns positive mass to every open set, and is mutually singular with respect to Lebesgue measure. This construction is also possible in higher dimensions. We also have analogous properties for the Gaussian multiplicative chaos measure $\mu_h:$
\begin{itemize}
\item[1.] Almost surely, $\mu_h$ is a non-atomic Radon measure.
\item[2.] If $U\subset {\R^d}$ is any deterministic open set, then $\mu_h(U)$ is given by a measurable function of $h\vert_U.$
\item[3.] Almost surely, we have that for any continuous function $f:{\R^d}\to \R,$ $e^{\gamma f}\cdot \mu_h = \mu_{h+f}.$
\item[4.] Almost surely, we have the following statement. Suppose that $\phi$ is a composition of dilations, translations, and rotations. Then
\[
\mu_{h\circ \phi + Q \log \vert \phi'\vert}(A) = \mu_h(\phi(A))
\]
for any measurable $A \subset \R^d.$
\end{itemize}

The $\gamma$-LQG metric is defined similarly. 
Let $d_\gamma > 2$ be the fractal dimension of $\gamma$-LQG. 
Prior to the construction of the LQG metric, this number was shown to arise in various approximations of the LQG metric in~\cite{dzz-heat-kernel,dg-lqg-dim}. 
After the metric was constructed, it was shown that $d_\gamma$ is its Hausdorff dimension~\cite[Corollary 1.7]{gp-kpz}. 
We note that $d_\gamma$ is not known explicitly except that $d_{\sqrt{8/3}}=4$. 
Let
\begin{equation} \label{eqn:xi}
\xi := \frac{\gamma}{d_\gamma} .
\end{equation}

Similarly to~\eqref{eqn:lqg-measure}, we define the approximating metrics
\begin{equation} \label{eqn:lfpp}
D_{h}^\e(z,w) := \inf_{P : z\to w} \int_0^1 e^{\xi h_\e(P(t))} |P'(t)| \,dt
\end{equation}
where the infimum is over all piecewise continuously differentiable paths from $z$ to $w$. It was shown in the series of papers~\cite{dddf-lfpp,local-metrics,lqg-metric-estimates,gm-confluence,gm-uniqueness} that there are normalizing constants $\{\mathfrak a_\e\}_{\e > 0}$ and a random metric $D_{h}$ on ${\R^d}$ such that 
\begin{equation} \label{eqn:lqg-metric}
D_h(z,w) = \lim_{\e \to 0} \mathfrak a_\e^{-1} D_{h}^\ep(z,w) 
\end{equation}
in probability with respect to the topology of uniform convergence on compact subsets of ${\R^d}\times{\R^d}$. The metric $D_h$ is defined to be the \textbf{$\gamma$-LQG metric}. The metric $D_h$ induces the same topology on ${\R^d}$ as the Euclidean metric, but has very different geometric properties. See~\cite{ddg-metric-survey} for a survey of known results about the LQG metric. 

In higher dimensions, the metric measure space $D_h$ associated with the log-correlated Gaussian field $h$ can be defined, but is not yet defined uniquely \cite{dgz-exponential-metric}. However, it is conjectured that such a metric can be defined uniquely, and that it satisfies the following ``strong" axioms. First, we define length metrics.
\begin{defn} \label{def-length}
Let $(X,\mathfrak d)$ be a metric space. 
For a path $P : [a,b]\to X$, we define the \emph{$\mathfrak d$-length} of $P$ by
\eqbn
\ell_{\mathfrak{d}}(P) := \sup_{T} \sum_{i=1}^{\# T} \mathfrak d(P(t_i) , P(t_{i-1})) 
\eqen
where the supremum is over all partitions $T : a= t_0 < \dots < t_{\# T} = b$ of $[a,b]$.  
We say that $\mathfrak d$ is a \textbf{length metric} if for each $x,y\in X$, the distance $\mathfrak d(x,y)$ is equal to the infimum of the $\mathfrak d$-lengths of the $\mathfrak d$-continuous paths from $x$ to $y$.
\end{defn}
Now we will define the Weyl scaling as follows: for any continuous $f:{\R^d}\to\R$ and any length metric $\mathfrak{d}$ on $\mathbb R^d$ inducing the Euclidean topology, we let
\begin{equation}\label{weyl}
(e^f \cdot \mathfrak{d})(x,y) = \inf_P \int_0^{\ell(P)} e^{f(P(t))} dt 
\end{equation}
where $\ell(P)$ is the $\mathfrak{d}$-length of $P$ and the inf is over all paths from $x$ to $y$ parametrized by $\mathfrak{d}$-length. For any open set $U\subseteq {\R^d}$ we also define the internal metric $\mathfrak{d}(\cdot,\cdot ; U)$ by
\[
\mathfrak{d}(x,y ; U) = \inf_{P \subseteq U} \ell_{\mathfrak{d}}(P),
\]
where the infimum is taken over paths $P:[0,1]\to U$ such that $P(0)=x$ and $P(1)=y.$
Then we will assume the metric $D_h$ satisfies the following axioms.

\begin{defn}\label{LQGlike}
We say a map from random generalized random functions $h$ on $\R^d$ to metric measure spaces $(\R^d,D_h,\mu_h)$ satisfies the strong LQG axioms if there exist constants $\xi,\gamma$ such that the following hold:
\begin{itemize}
\item[1.] Almost surely, $D_h$ is a length metric.
\item[2.] Let $U$ be a deterministic open set. Then the internal metric $D_h(\cdot ,\cdot ; U)$ is a measurable function of $h \vert_U.$
\item[3.] Weyl scaling: Recall the definition of Weyl scaling \eqref{weyl}.
Then, a.s., for every continuous function $f$, we have $e^{\xi f} \cdot D_h = D_{h+f}$. 
\item[4.] Coordinate change: let $\lambda>0$ and $z_0\in {\R^d}.$ Then almost surely,
\[
D_h(\lambda x+z_0,\lambda y+z_0) = D_{h(\lambda \cdot + z_0)+Q\log \lambda}(x,y) ,\quad \forall x,y\in \R^d
\]
where $Q:=\frac{\gamma}{2}+\frac{d}{\gamma}.$
\end{itemize}
\end{defn}
Note that the above axioms are satisfied by the LQG metric in dimension $d=2$ (see \cite{gm-uniqueness} and \cite{lqg-metric-estimates}). It is believed that the above axioms also hold for any subsequential limit obtain in \cite{dgz-exponential-metric} for $d \geq 3,$ but no proof of this has been written yet.

We will also assume that
\begin{equation}\label{gammaxiass}
\frac{\gamma}{\xi} >d
\end{equation}
as in the two dimensional case. We expect this to hold, since one expects $\frac{\gamma}{\xi}= \dim(\R^d;D_h)$ and $\dim(\R^d;D_h)>d$ since $D_h$'s geometry should be strictly ``rougher" than Euclidean. We expect this can be proven rigorously with comparison arguments, but we do not do this here, and is the subject of forthcoming work of the first author and Z. Zhuang. 

We will additionally assume that
\begin{equation}\label{extramombound}
\mathbb{E}(\mathrm{diam}_{D_h}([0,1]^d)^{\bar{p}}) < \infty
\end{equation}
for some $\bar{p}>d.$ This also holds for the LQG metric in dimension $2$ (see \cite[Proposition 3.9]{lqg-metric-estimates}). The axioms above are all used throughout the paper, while the bounds \eqref{gammaxiass} and \eqref{extramombound} are technical assumption, only used in Subsection~\ref{sec-particular} and Subsection \ref{fppargument}, respectively.

\subsection{Main results}

Recall that our goal is to determine the closed support of the law of $({\R^d} , D_h,\mu_h)$ with respect to some reasonable topology on the space of metric measure spaces, assuming $h$ is as in Definition \ref{def:gff}.

The LQG metric (and any higher dimensional analogue) is a length metric essentially by construction. The class of length metrics is preserved under various forms of convergence, e.g., uniform convergence~\cite[Exercise 2.4.19]{bbi-metric-geometry(dupe)} and Gromov-Hausdorff convergence~\cite[Theorem 7.5.1]{bbi-metric-geometry(dupe)}. Hence, the LQG metric cannot approximate a metric which is not a length metric. Our first main result implies that this is essentially the only constraint.

We say that $(X,\mathfrak d)$ is \textbf{boundedly compact} is each closed bounded subset of $X$ is $\mathfrak d$-compact.\footnote{The reason for imposing the bounded compactness assumption is to make it so that all of the $\mathfrak d$-geodesics going between points in $[-R,R]^d$ are contained in some ($\mathfrak d$-dependent) compact set. We expect that Theorem~\ref{flat} is true without the boundedly compact assumption, and that this can be obtained by approximating the restriction to $[-R,R]^d$ of a length metric which is not boundedly compact by the corresponding restrictions of boundedly compact length metrics. However, for the sake of brevity, we do not carry this out here.}

\begin{theorem} \label{flat} 
Let $\mathfrak{d}$ be a boundedly compact length metric on $\R^d$ which induces the Euclidean topology and let $\mathfrak{m}$ be a locally finite Borel measure on $\R^d$. Suppose that there exists a map $h \mapsto (\R^d,D_h,\mu_h)$ as in Definition \ref{LQGlike} with $\xi < 1,$ and suppose $h$ is a log-correlated Gaussian field. Let $D_h,\mu_h$ be as Definition \ref{LQGlike}. Then $D_{h}$ and $\mu_{h}$ approximate the metric $\mathfrak{d}$ and the measure $\mathfrak{m}$ respectively with positive probability. More precisely, for any $\e,R>0,$ we have with positive probability that
\[
\left\vert D_{h}(x,y)-\mathfrak{d}(x,y) \right\vert<\e
\]
for all $x,y\in [-R,R]^d$ and at the same time for any Borel set $A\subseteq[-R,R]^d$ we have
\[
\mathfrak{m}(A) \leq \mu_{h}(A_\e)+\e
\]
and
\[
\mu_{h}(A) \leq \mathfrak{m}(A_\e)+\e
\]
where $A_\e:= \{ z\in {\R^d}: \inf_{u\in A} \vert z-u\vert < \e\}.$
\end{theorem}

%We emphasize that there is no constraint on $\mathfrak d$ besides that it is a length metric. For example, we can make $D_{h}$ approximate metrics which are not Riemannian, e.g., the $\ell^1$ or $\ell^\infty$ metrics on ${\R^d}$. We will first prove Theorem~\ref{flat} in the case of a smooth Riemannian metric $\mathfrak d$, then use additional arguments (in the spirit of \cite{burago}) to show that any length metric on ${\R^d}$ can be approximated by smooth Riemannian metrics. 

Perhaps surprisingly, we do not need to assume any relationship between $\mathfrak d$ and $\mathfrak m$ in Theorem~\ref{flat}. 
Even though $D_h$ and $\mu_h$ are closely related to each other (see, e.g., \cite{gs-lqg-minkowski}), there is a positive chance for them to behave quite differently. 
A much weaker result in this direction in the case of $d=2$ appears as~\cite[Proposition 11.9]{bg-harmonic-ball}.

In dimension two, Theorem \ref{flat} can be viewed as a general-purpose theorem for making the LQG measure and metric do things with positive probability. 
In this sense, these results are an LQG analog of ``support theorems" for various stochastic processes, which characterize the elements of the state space that they can approximate with positive probability. Examples of such support theorems include the classical fact that $d$-dimensional Brownian motion can be made to approximate any continuous path in $\mathbb R^d$; and analogous statements for Schramm-Loewner evolution (see, e.g.,~\cite[Section 2]{miller-wu-dim}). 
%We also note that even though the metric $D_h$ is formally ``conformally flat", it can approximate any Riemannian metric, even if it is not itself conformally flat.

We expect that Theorem \ref{flat} and the some of the techniques used in its proof will be useful in future works concerning LQG and its higher dimensional analogs. %(see e.g. \cite{projwithzijie}).
Indeed, when studying LQG, one often needs to show that $D_{h}$ and/or $\mu_{h}$ has some prescribed behavior with positive probability. Prior to this work, this was typically done via ad hoc methods based on the fact that adding a smooth bump function to $h$ changes its law in an absolutely continuous way. See~\cite[Section 4.1]{ddg-metric-survey} for an explanation of this technique. Arguments of this type can sometimes be quite complicated, see, e.g.,~\cite[Sections 11.2 and 11.3]{harmonicballs} or \cite[Section 5]{uniquenessoflqg}. In future work, such arguments could be replaced by applications of Theorem~\ref{flat}.

In the course of proving Theorem \ref{flat} in dimension $d\geq 3$, we also establish several independently interesting facts about log-correlated Gaussian fields which we expect to be useful elsewhere (see e.g. Lemmas \ref{samelaw}, \ref{avggoingto0}, and \ref{CMhigherdim}). 
%\textcolor{green}{Moreover, generalizations of these results to higher dimensions would require our result Theorem \ref{flat}. The base of our argument is similar to the ad hoc arguments used in these papers, however our argument is more technical and other ideas are needed to establish a result in the level of generality we are working in. Our key ingredients are a result about the existence of a limit shape in first passage percolation, and a result about approximation by conformally flat Riemannian metrics.}
%For example, Section 11.3 in \cite{harmonicballs} could be simplified by applying our result to approximate both the LQG metric and the LQG measure. Other examples using the approximation of the LQG metric include Section 5 in \cite{uniquenessoflqg}, Section 3, 5 in \cite{supercriticaluniqueness}, Section 6 in \cite{dimensionoflqg}.

\subsection{Interpretation of results in dimension \texorpdfstring{$d\geq 3$}{d3}}
\label{sec:higher-dim}
 
In dimension $d\geq 3$, Theorem~\ref{flat} is of interest for the same reasons as in the two-dimensional case, as discussed just above. It also raises interesting questions about the interpretation of the random Riemannian metric tensor~\eqref{eqn:higher-dim-metric}, which we will discuss in this section. 

A Riemannian metric tensor $g$ on a domain $U\subset \mathbb R^d$ is \textbf{conformally flat} if there is a continuous function $f : U \to \mathbb R$ such that $g = e^f (dx_1^2 + \dots + dx_d^2)$, where $dx_1^2 + \dots + dx_d^2$ is the Euclidean Riemmanian metric tensor. More generally, a $d$-dimensional Riemannian manifold is \textbf{locally conformally flat} if it admits an atlas where in each chart, the Riemannian metric is conformally flat.  
 It is a classical theorem that every two dimensional Riemannian manifold is locally conformally flat, i.e., it can locally be expressed in isothermal coordinates. However, this is not true in dimension $d \geq 3$. Rather, a $d$-dimensional Riemannian manifold is locally conformally flat if and only if its Cotton tensor (for $d=3$) or its Weyl tensor (for $d\geq 4$) is identically zero. 
 
Theorem~\ref{flat} implies that the random metric and measure associated with~\eqref{eqn:higher-dim-metric} can approximate \emph{any} length metric and locally finite Borel measure on $\mathbb R^d$ with positive probability, even a metric and measure associated with a Riemannian metric tensor which is not conformally flat. This leads one to wonder whether the random Riemannian metric tensor associated with~\eqref{eqn:higher-dim-metric} should be thought of as a random conformally flat metric on $\mathbb R^d$, or as a general random Riemannian metric on $\mathbb R^d$. This is closely related to some fundamental questions about analogs of LQG in higher dimensions, e.g., the following. 
\begin{itemize}
\item Let $n \in \mathbb N$ and let $M_n$ be a uniform sample from the set of triangulations of the $d$-sphere (i.e., simplicial complexes homeomorphic to $\mathbb S^d$) with $n$ $d$-simplices. For $d=2$, it is known that $M_n$ (equipped with its graph distances, appropriately rescaled) converges in the Gromov-Hausdorff sense to $\sqrt{8/3}$-LQG~\cite{legall-uniqueness,miermont-brownian-map,lqg-tbm1,lqg-tbm2} (see also~\cite{hs-cardy-embedding} for a stronger topology of convergence).  For $d\geq 3$, does $M_n$ converge in the scaling limit (e.g., with respect to the Gromov-Hausdorff distance) to a $d$-manifold, equipped with a random Riemannian distance function of the form~\eqref{eqn:higher-dim-metric}? Or is it the case that to get convergence to a random Riemannian distance function of the form~\eqref{eqn:higher-dim-metric}, one needs to instead take a uniform sample from a restricted set of triangulations of $\mathbb S^d$ which are required to satisfy some discrete analog of conformal flatness? See~\cite{dgz-exponential-metric,bg-rw-sphere-packing} for related discussions.
\item Consider the following random Riemannian metric tensor on $\mathbb R^d$: 
\begin{equation} \label{eqn:higher-dim-general}
\sum_{i,j=1}^d e^{\beta h_{i,j}} dx_i dx_j 
\end{equation}  
where the $\beta > 0$ is a parameter and $h_{i,j}$s are log-correlated Gaussian fields on $\mathbb R^d$ (coupled together in some manner). 
Can one make rigorous sense of the distance function associated with~\eqref{eqn:higher-dim-general}? If so, then this distance function superficially seems to be supported on a more general class of metrics than~\eqref{eqn:higher-dim-metric} since~\eqref{eqn:higher-dim-general} is not required to be conformally flat. However, our results show that any length metric on $\mathbb R^d$ can be approximated with positive probability by the Riemannian distance function associated with~\eqref{eqn:higher-dim-general} (conditional on the conjecture that this Riemannian distance function satisfies the strong axioms). So, in what sense does~\eqref{eqn:higher-dim-general} actually give a more general Riemannian metric than~\eqref{eqn:higher-dim-metric}? Could it be that the limit of a natural approximation scheme for the Riemannian distance function associated with~\eqref{eqn:higher-dim-general} is actually just the Riemannian distance function associated with~\eqref{eqn:higher-dim-metric}, where $h$ is some function of $\{h_{i,j}:i,j=1,\dots,d\}$ and $\gamma$ is some function of $\beta$ and $d$?
\end{itemize}
We do not know the answers to the above questions, even conjecturally, but they seem to be worth pursuing further.

\subsection{Applications of Theorem \ref{flat} in the two-dimensional case}
\subsubsection{Application to the LQG sphere}
As a consequence of Theorem \ref{flat}, one can obtain a similar result concerning a special LQG surface called the $\gamma$-LQG sphere, which is represented by a certain special variant of the GFF. 
There are a number of equivalent ways of defining the $\gamma$-LQG sphere. The first definitions appeared in~\cite{wedges,dkrv-lqg-sphere} and were proven to be equivalent in~\cite{ahs-sphere}. The definition we give here is~\cite[Definition 2.2]{ahs-sphere} with $k=3$ and $\alpha_1=\alpha_2=\alpha_3=\gamma$.

\begin{defn} \label{def:sphere}
Let $\gamma\in (0,2)$ and let
\begin{equation} \label{eqn:Q} 
Q := \frac{2}{\gamma} + \frac{\gamma}{2} .
\end{equation} 
The \textbf{Liouville field} is the random generalized function 
\begin{equation} \label{eqn:hL}
h = h^{\mathbb{C}}+\gamma G(0,\cdot)+\gamma G(1,\cdot)-(2Q-\gamma)\log\vert \cdot \vert_+ 
\end{equation}
where $h^\mathbb{C}$ is the whole-plane GFF,  $G(\cdot,\cdot)$ is its covariance kernel as in~\eqref{eqn:cov}, and $\log\vert \cdot\vert_+:= \log (\max\{\vert z\vert,1\}).$ The \textbf{(triply marked, unit area) $\gamma$-LQG sphere} is the LQG surface parametrized by $\mathbb{C}$ represented by the field $h$, where the law of $\BB P_h$ of $h$ is equal to the law $\BB P_{\tilde h}$ of
\begin{equation} \label{eqn:tilde-hL}
\tilde h := h - \frac{1}{\gamma} \log \mu_{h}(\mathbb{C}) 
\end{equation}
weighted by a $\gamma$-dependent constant times $[\mu_{h}(\mathbb{C})]^{4/\gamma^2-2}$, i.e., $\frac{d\BB P_h}{d\BB P_{\tilde h}} = C(\gamma) \times [\mu_{h}(\mathbb{C})]^{4/\gamma^2-2}$.

\end{defn}

We note that if $h$ is as in Definition~\ref{def:sphere}, then $\mu_h(\mathbb{C}) = 1$ (this is because of the subtraction of $\frac{1}{\gamma} \log \mu_{h}(\mathbb{C}) $ in~\eqref{eqn:tilde-hL}). 
Furthermore, one can check that the LQG metric associated with the LQG sphere extends continuously to the one-point compactification $\mathbb{C} \cup \{\infty\}$, so it can be viewed as a metric on the sphere (not just on $\mathbb{C}$). 
One reason why the $\gamma$-LQG sphere is special is that it is the LQG surface which arises as the scaling limit of random planar maps with the sphere topology. See e.g. subsubsection~\ref{sec:rpm} for more details.

We also have the following result for LQG in dimension 2.
\begin{cor}\label{measandmet} 
Let $\mathfrak d$ be a length metric on the sphere $\mathbb S^2 = \BB C \cup \{\infty\}$ which induces the Euclidean topology and let $\mathfrak m$ be a Borel measure on $\mathbb S^2$ with $\mathfrak m(\mathbb S^2 )  =1$. 
Fix $\gamma\in(0,2)$ and let $D_{\tilde{h}}$ and $\mu_{\tilde{h}}$ denote the metric and measure associated with the $\gamma$-LQG quantum sphere, viewed as a metric and measure on $\mathbb S^2$. 
For each $\e  > 0$, it holds with positive probability that
\begin{equation}\label{metriccomparisons}
\left\vert D_{\tilde{h}}(x,y) -\mathfrak{d}(x,y) \right\vert \leq\e, \quad \forall x,y\in \mathbb S^2 
\end{equation}
and at the same time for any Borel set $A \subset \mathbb S^2$,
\begin{equation}\label{measurecomp1}
\mathfrak{m}(A) \leq \mu_{\tilde{h}}(A_\e)+\e
\end{equation}
and
\begin{equation}\label{measurecomp2}
\mu_{h}(A) \leq \mathfrak{m}(A_\e)+\e
\end{equation}
where $A_\e:= \{ z\in \mathbb{C}: \inf_{u\in A} d_S(z,u) < \e\},$ with $d_S$ being the Euclidean metric on $\mathbb{S}^2.$
In particular, with positive probability we have that 
\begin{equation}
 d_{GHP} \left( (\mathbb S^2 , \mathfrak d , \mathfrak m) , (\mathbb S^2 , D_{\tilde{h}} , \mu_{\tilde{h}})      \right) \leq \e .
\end{equation} 
\end{cor}
To obtain Corollary \ref{measandmet}, one can write the quantum sphere field ${\tilde{h}}$ in terms of the Gaussian free field,
\[
\tilde{h} := \bar{h} - \frac{1}{\gamma} \log \mu_{\bar{h}}(\mathbb{C}),
\]
where
\[
\bar{h} = h+\gamma G(0,\cdot)+\gamma G(1,\cdot)-(2Q-\gamma)\log\vert \cdot \vert_+.
\]
Since Theorem \ref{flat} lets us ``fix" the total measure $\mu_h,$ it suffices to consider $\bar{h},$ and show that $(D_{\bar{h}},\mu_{\bar{h}})$ is close to any conformally flat Riemannian metric $e^f \cdot d_0$ with positive probability. For this, we first approximate the function $\gamma G(0,\cdot)+\gamma G(1,\cdot)-(2Q-\gamma)\log\vert \cdot \vert_+$ by a smooth, finite Dirichlet energy function $f_\e.$ This approximation can be chosen so that $f_\e(z) = \gamma G(0,z)+\gamma G(1,z)-(2Q-\gamma)\log\vert z \vert_+$ for $z \in B_{\frac{1}{\e}}(0)\setminus (B_\e(0)\cup B_\e(1)).$ By the Cameron Martin property of the GFF, we know that Corollary \ref{measandmet} holds if we replace $\tilde{h}$ with $h+f_\e.$ Now the final step is to show that both the measure and metric are ``unaffected" by the singularities at $0,1, \infty.$ To do this, we note that if $P$ is any geodesic between two points, then we can replace $P$ by a path with similar LQG length avoiding $\e$ neighborhoods of the singularities. A more detailed version of this argument can be found in the previous Arxiv version of this paper.

\subsubsection{Application to random planar maps}
\label{sec:rpm}

Due to the convergence of uniform random planar maps toward $\sqrt{8/3}$-LQG, Theorem~\ref{measandmet} has implications for the study of random planar maps. 
To explain this, let $M_n$ be sampled uniformly from the set of all quadrangulations of the sphere (planar maps whose faces all have degree 4) with $n \in \BB N$ total faces. 
Let $D_n$ and $\mu_n$ denote the graph distance and the counting measure on vertices of $M_n$, respectively. We will now recall the Gromov-Hausdorff-Prokhorov distance. Suppose that $\mathfrak m_1,\mathfrak m_2$ are Borel measures on a metric space $(X,\mathfrak{d}).$ The Prokhorov distance is given by
\[
d_{\mathrm{P}} (\mathfrak m_1,\mathfrak m_2) :=\inf\{\e>0:\mathfrak m_1(A)\leq \mathfrak m_2(A_\e)+\e,\; \mathfrak m_2(A)\leq \mathfrak m_1(A_\e)+\e \mbox{ for all closed }A\},
\]
where
\[
A_\e:=\{x\in X: \inf_{y\in A}\mathfrak{d}(x,y)<\e\}.
\]
Also, recalling that the Hausdorff distance between pairs of sets $A,B$ in a metric space $(X,\mathfrak{d})$ is defined by
\[
d_H(A,B):= \inf\{r>0:A \subseteq \cup_{x\in B}B_{\mathfrak{d}}(x),\;B \subseteq \cup_{x\in A}B_{\mathfrak{d}}(x)\}.
\]
We define the Gromov-Hausdorff distance between two metric spaces $(X_1,\mathfrak{d}_1)$ and $(X_2,\mathfrak{d}_2)$ as the infimum over all $r>0$ such that there exists a third metric space $(Z,\mathfrak{d})$ and subspaces $X_1', X_2'$ that are isometric to $X_1$ and $X_2$ respectively, and additionally $d_H(X_1',X_2')<r.$ Finally, the Gromov-Hausdorff-Prokhorov distance between two metric measure spaces $(X_1,\mathfrak{d}_1,\mu_1)$ and $(X_2,\mathfrak{d}_2,\mu_2)$ is defined as
\[
d_{GHP}((X_1,\mathfrak{d}_1,\mu_1),(X_2,\mathfrak{d}_2,\mu_2))=\inf_{(Z,\mathfrak{d}),\iota_1,\iota_2}d_H(X_1,X_2)+d_{P}(\iota_1^*\mu_1, \iota_2^*\mu_2),
\]
where the infimum is taken over isometric embeddings $\iota_1:X_1 \to Z$ and $\iota_2:X_2 \to Z.$

It was shown independently by Le Gall~\cite{legall-uniqueness} and Miermont~\cite{miermont-brownian-map}, building on many other works, that the metric measure spaces\footnote{Often this scaling limit result is stated with an additional constant factor in front of $n^{-1/4} D_n$. For convenience we implicitly re-scale the metric on the Brownian map so that this constant is not needed. We do the same for the $\sqrt{8/3}$-LQG metric. This re-scaling does not affect the statement of Corollary~\ref{rpm}.}
 $(M_n , n^{-1/4} D_n , n^{-1} \mu_n)$ converge in law in the Gromov-Hausdorff-Prokhorov sense to a random metric measure space called the \textbf{Brownian map}. See~\cite{legall-sphere-survey} for a survey of this work and~\cite{ab-simple,bjm-uniform,aa-odd-angulation,marzouk-degree-sequence} for extensions to other types of random planar maps with the sphere topology. 

Subsequently, Miller and Sheffield~\cite{brownianliouville1,brownianliouville2,brownianliouville3} constructed a metric associated with $\sqrt{8/3}$-LQG (using a very different construction from the one described above). They then showed that the $\sqrt{8/3}$-LQG sphere, equipped with this metric and its LQG area measure, is isometric to the Brownian map. 
Finally, it was shown in~\cite[Corollary 1.4]{gm-uniqueness} that the Miller-Sheffield metric a.s.\ coincides with the metric from~\eqref{eqn:lqg-metric} for $\gamma=\sqrt{8/3}$ ($\xi = 1/\sqrt 6$). Combining these results shows that if $h$ is the random generalized function associated with the $\sqrt{8/3}$-LQG sphere and $M_n$ is a uniform quadrangulation as above, then 
\begin{equation} \label{eqn:rpm-ghp}
(M_n , n^{-1/4} D_n , n^{-1} \mu_n) \to \left( \BB S^2 , D_h , \mu_h \right)  
\end{equation}
in law with respect to the Gromov-Hausdorff-Prokhorov distance. The following corollary is immediate from Theorem~\ref{measandmet} and~\eqref{eqn:rpm-ghp}.

\begin{cor} \label{rpm}
For $n\in\mathbb N$, let $M_n$ be a uniform quadrangulation of the sphere with $n$ faces, as above. 
Let $(X, \mathfrak d , \mathfrak m)$ be a metric space homeormorphic to the sphere, equipped with a length metric and a probability measure.
For each $\epsilon  > 0$, there exists $p = p(\epsilon , (X,\mathfrak d , \mathfrak m)) > 0$ such that for each sufficiently large $n\in\mathbb N$, 
\begin{equation}
\mathbb P\left[ d_{\mathrm{GHP}}\left( (M_n , n^{-1/4} D_n , n^{-1} \mu_n) ,  (X,\mathfrak d , \mathfrak m) \right) < \epsilon \right] \geq p .
\end{equation}
The same holds for other classes of random planar maps known to converge to the Brownian map, e.g., uniform $k$-angulations for $k\geq 3$~\cite{legall-uniqueness,aa-odd-angulation} and uniform planar maps with unconstrained face degree~\cite{bjm-uniform}.
\end{cor}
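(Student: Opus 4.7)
The plan is simply to combine Theorem \ref{measandmet} (applied at $\gamma = \sqrt{8/3}$) with the random planar map scaling limit \eqref{eqn:rpm-ghp}. First, Theorem \ref{measandmet} produces some $p_1 = p_1(\epsilon, (X, \mathfrak d, \mathfrak m)) > 0$ such that
\begin{equation*}
\mathbb P\bigl[ d_{\mathrm{GHP}}\bigl( (\mathbb S^2, D_h, \mu_h), (X, \mathfrak d, \mathfrak m) \bigr) < \epsilon/2 \bigr] \geq p_1 .
\end{equation*}
The point is that we now want to transfer this positive-probability statement from the $\sqrt{8/3}$-LQG sphere to the random planar map approximations $(M_n, n^{-1/4} D_n, n^{-1} \mu_n)$, which converge in law to it in the GHP sense by \eqref{eqn:rpm-ghp}.

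To do so, I would invoke the continuous mapping theorem: since $d_{\mathrm{GHP}}$ is a bona fide metric on the space of (isometry classes of) compact metric measure spaces, the map $Y \mapsto d_{\mathrm{GHP}}(Y, (X, \mathfrak d, \mathfrak m))$ is $1$-Lipschitz, hence continuous. Combined with \eqref{eqn:rpm-ghp}, this gives
\begin{equation*}
d_{\mathrm{GHP}}\bigl( (M_n, n^{-1/4} D_n, n^{-1} \mu_n), (X, \mathfrak d, \mathfrak m) \bigr) \xrightarrow{d} d_{\mathrm{GHP}}\bigl( (\mathbb S^2, D_h, \mu_h), (X, \mathfrak d, \mathfrak m) \bigr)
\end{equation*}
in law. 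Applying the portmanteau theorem to the open set $(-\infty, \epsilon) \subset \mathbb R$ (noting $\{d_{\mathrm{GHP}}(\cdot, (X, \mathfrak d, \mathfrak m)) < \epsilon/2\} \subset \{d_{\mathrm{GHP}}(\cdot, (X, \mathfrak d, \mathfrak m)) < \epsilon\}$) yields
\begin{equation*}
\liminf_{n \to \infty} \mathbb P\bigl[ d_{\mathrm{GHP}}\bigl( (M_n, n^{-1/4} D_n, n^{-1} \mu_n), (X, \mathfrak d, \mathfrak m) \bigr) < \epsilon \bigr] \geq p_1 ,
\end{equation*}
so for all sufficiently large $n$ one may take $p := p_1 / 2$. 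For the other random planar map models listed (uniform $k$-angulations for $k \geq 3$, unconstrained face-degree maps, etc.), the identical argument applies since each of them is known to satisfy the same GHP scaling limit to $(\mathbb S^2, D_h, \mu_h)$. There is no real obstacle here --- the entire argument is a one-line consequence of Theorem \ref{measandmet} together with the portmanteau theorem, which is presumably why the authors write that the corollary is immediate.
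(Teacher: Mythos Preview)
Your argument is correct and is exactly the standard unpacking of the paper's claim that the corollary ``is immediate from Theorem~\ref{measandmet} and~\eqref{eqn:rpm-ghp}''; the paper gives no further proof. The use of $\epsilon/2$ is harmless but unnecessary, since portmanteau applied directly to the open set $\{d_{\mathrm{GHP}}(\cdot,(X,\mathfrak d,\mathfrak m))<\epsilon\}$ already yields the bound.
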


%Corollary~\ref{rpm} can be thought of as an ``equidistribution" result for uniform planar maps in a similar spirit to the recent work \cite{vasudevan-triangulation}. However, the content of Corollary~\ref{rpm} is quite different from the results of~\cite{vasudevan-triangulation}. The latter paper considers the Teichmueller distance, which only depends on the conformal structure of the triangulation, rather than the Gromov-Hausdorff-Prokhorov distance. In particular, the results of~\cite{vasudevan-triangulation} are vacuous in genus 0, since the sphere has only one possible conformal structure. Furthermore, a crucial point in~\cite{vasudevan-triangulation} is that the bounds are uniform over the genus of the triangulation, whereas our result is only stated for the sphere. It may be possible to extend our results to a higher (fixed) genus setting, but we do not see how to get bounds which are uniform in the genus. 

\subsection{Outline}
  
In Section \ref{flatproof} we prove the following variant of Theorem \ref{flat}:
\begin{theorem}\label{particular}
Let $\e>0,$ and let $f : \R^d\to \R$ be a smooth compactly suported function. Then with positive probability we have that
\[
\sup_{x,y\in [-R,R]^d}\vert D_{h}(x,y)-e^f \cdot d_0(x,y)\vert \leq \e
\]
and at the same time,
\[
\mu_{h}([-R,R]^d) \leq \e.
\]
\end{theorem}
To prove this, we use an analogue of the white noise decomposition for the Gaussian free field to decompose $h$ as $h=h_{0,\beta}+h_{\beta,\infty}$, where $\beta>0$ is small, $h_{0,\beta}$ is a random generalized function, and $h_{\beta,\infty}$ is an independent random smooth function (see~\eqref{eqn:wn-decomp} for a precise statement). The generalized function $h_{0,\beta}$ has very weak long-range correlations and its law is invariant under rotations and translations of ${\R^d}$. The function $h_{\beta,\infty}$ does not enjoy these properties, but it is smooth. Once we do this, we discretize $\R^d$ and use a first passage percolation argument as in \cite{indepperc} followed by scaling arguments for LQG to show that $\beta^{-(\xi Q-1)} D_{h_{0,\beta}}$ is approximately Euclidean and simultaneously $\beta^{-(\gamma Q -d)}\mu_{h_{0,\beta}}$ is approximately the Lebesgue measure with high probability when $\beta$ is small (Propositions \ref{approxeucdist} and \ref{approxeucmeas}). After this, in Lemma \ref{consthinfty} one controls the smooth part corresponding to $h_{\beta,\infty}$ by forcing it to be close to any given function with positive probability. Forcing $h_{\beta,\infty}$ to be approximately $\frac{\xi Q-1}{\xi} \log\beta$ and using Weyl scaling, one obtains that with positive probability, that $D_h$ approximates any conformally flat metric, while $\mu_h([-R,R]^d)$ is bounded by a multiple of $\theta =O(\beta^{\frac{\gamma}{\xi} -d}).$

%One then wishes to use the Cameron-Martin property of the GFF to pass from $h_0$ to $h.$ However, $-\phi+f-\left(Q-\frac{1}{\xi}\right)\log(\beta)$ is not necessarily of finite Dirichlet energy, and is not of total mean $0.$ To get around this obstacle, we approximate by a smooth compactly supported function, and we add a bump function to renormalize to use the Cameron-Martin property. {Maybe not necessary to explain technical points about application of Cameron-Martin property.}

In Section \ref{pfofsphere} we show that the measure can be modified in such a way that we do not change the metric very much. In Proposition \ref{adddeltas} we add bump functions to $h$ to show that we can make $\mu_h$ approximate an arbitrary finite sum of point masses. Then in Proposition \ref{samemetric} we show that the metric is not greatly modified, by showing that geodesic can freely go around the added bumps without adding much length. The bump functions we add will be large on the union of a large collection of small squares, and very negative on a small neighborhood of each of these squares. See Section \ref{colorfulsquares} for a precise definition. This construction is similar to Section 11.2 in \cite{bg-harmonic-ball}. Together with the results in Section \ref{flatproof} this gives the proof of Theorem \ref{measandmet} for the case that our metric is a conformally flat Riemannian one. The last step is to use the following folklore statement.
\begin{theorem}
Let $\bar{d}$ be a length metric on $\R^d$ which induces the same topology as the Euclidean metric $d_0.$ Let $\e>0$ and $R>0.$ Then there exists a bounded continuous function $f:\R^d\to \R$ such that
\[
\sup_{x,y \in [-R,R]^d}\vert \bar{d}(x,y)-e^f\cdot d_0(x,y) \vert \leq \e.
\]
\end{theorem}
A proof of this is given in \cite{appendix}.

\section*{Acknowledgements}

We thank Dmitri Burago, Tuca Auffinger, and Sahana Vasudevan for helpful discussions. E.G.\ was partially supported by a Clay research fellowship and by National Science Foundation grant DMS-2245832. 

\section{Preliminaries}
 
Fix $\gamma\in (0,2)$. Let $  \bar{h}$ be a whole-space log-correlated Gaussian field as in~\eqref{eqn:cov}. Let $h$ be a \emph{log-correlated Gaussian field plus a continuous function}, i.e., a random generalized function on $\R^d$ which can be coupled with $h$ in such a way that $\bar{h} - h$ is a continuous function on $\R^d$. 
As explained in Section \ref{sec:lqg-def}, one can define the $\gamma$-LQG metric $D_{\bar{h}}$ and the LQG measure $\mu_{\bar{h}}$ associated with $\bar{h}.$

We will also define the sphere average of a function. Suppose that $g:{\R^d}\to \R$ is a function, $r>0,$ and $z_0\in{\R^d}.$ Then we define
\[
g_r(z_0):=\frac{1}{\vert B_r(z_0)\vert} \int_{\p B_r(z_0)}g(z)d\sigma(z) ,
\]
where $\sigma$ is the uniform measure on the sphere $\p B_r(z_0)$.

Let $h$ be a log-correlated Gaussian field, as in \eqref{eqn:cov}. For any $z\in {\R^d}$ and $r>0,$ we define the sphere average $h_r(z)$ as the average of $h$ over the sphere of radius $r$ centered at $z,$ which is well defined despite the fact that $h$ does not have well-defined pointwise values (see Section 11.1 in \cite{fgf-survey}).

We note for the future that the law of $h$ up to constant is invariant under complex affine transformations, that is for $a \in {\R^d}\setminus \{0\},$ $b\in {\R^d},$ we have 
\begin{equation}\label{spatialscaling}
h(a \cdot +b)-h_{\vert a\vert}(b) \overset{d}{=}  h.
\end{equation}

We will also use a few additional properties about log-correlated fields, such as an analogue of the white noise decomposition (subsection \ref{whitenoisesection}), and an analogue of the Cameron-Martin property for $h$ (subsection \ref{CMpropsection}): if $f : \R^d \to \BB C$ is a continuously differentiable function whose Dirichlet energy $\int_{\R^d} |\nabla f|^2 $ is finite and whose average $(f)_1(0)$ over the unit sphere is zero, then the law of $h+f$ is absolutely continuous with respect to the law of $h.$

\subsection{White noise decomposition for log-correlated Gaussian fields}\label{whitenoisesection}
Let $W$ be a space-time white noise on $\R^d \times \mathbb R_+$. That is, $W$ is the Gaussian random generalized function such that for each $f\in L^2 (\R^d \times \mathbb R_+)$, the (formal) integral $\int_0^\infty \int_{\R^d}  f(x,t) W(dx,dt)$ is centered Gaussian with variance $ \int_0^\infty \int_{\BB R^2} f(x,t)^2 \,dx\,dt$. For $t > 0$, we define the kernel $p_t(x)$ to be the kernel associated with $(-\Delta)^{\frac{d}{2}},$ that is $p_t(x)$ is formally the solution to 
\begin{equation}\label{kerneldef}
\left\{
\begin{matrix}
\p_t p_t + (-\Delta)^{\frac{d}{2}} p_t =0,\\
p_0=\delta_0.
\end{matrix}
\right.
\end{equation}
Rigorously, $p_t\in L^2(\R^d)$ is the unique kernel such that the (unique) solution to
\[
\left\{
\begin{matrix}
\p_t u + (-\Delta)^{\frac{d}{2}}u=0\\
u(0,x)=u_0(x)
\end{matrix}
\right.
\]
is given by
\[
u(t,x) = \int_{\R^d} p_t(x-y)u_0(y)dy.
\]
To see that this kernel is well defined, one can take the Fourier transform on both sides of the equation for $p_t$ to obtain
\[
\left\{
\begin{matrix}
\p_t \hat{p}_t + (2\pi\vert \xi\vert^2)^{\frac{d}{2}}\hat{p}_t = 0,\\
\hat{p}_0(\xi)= 1,
\end{matrix}
\right.
\]
which suggests we define
\[
p_t = \mathcal{F}^{-1} e^{-(2\pi \vert\xi\vert^2)^{\frac{d}{2}}t}.
\]
Note that by scaling, we have the identity
\begin{equation}\label{kernelscaling}
p_t(x) = \frac{1}{t}p_1\left(\frac{x}{t^{\frac{1}{d}}}\right)
\end{equation}
For $0  \leq \alpha \leq \beta < \infty$, we also define
\begin{equation}\label{whitenoisedecomp}
h_{\alpha,\beta}(y) :=\int_{\alpha^2 }^{\beta^2} \int_{{\R^d}} p_{t/2}(x-y ,t) W(dx,dt).
\end{equation}
We will need the following basic property about the kernel $p_t(x).$
\begin{lemma}\label{kernelconcatenation}
Let $p_t(x)$ be defined as in \eqref{kerneldef}. Then we have the following identity:
\[
\int_{\R^d} p_t(z_1-x)p_s(x-z_2) dx = p_{t+s}(z_1-z_2).
\]
\end{lemma}
\begin{proof}
By translation invariance, it suffices to show
\[
\int_{\R^d} p_{t}(x)p_{s}(x-z)dx = p_{s+t}(z).
\]
For this, suppose $u$ is  the solution to
\[
\left\{
\begin{matrix}
\p_t u + (-\Delta)^{\frac{d}{2}}u=0,\\
u(0,x)=u_0(x)
\end{matrix}
\right.
\]
where $u_0$ is any smooth compactly supported initial condition. Let
\[
v(t,x) = \int_{\R^d} \left(\int_{\R^d} p_t(z)p_s(x-y-z)dz\right)u_0(y)dy.
\]
Then
\begin{eqnarray*}
\p_t v(t,x) &=& \p_t \left(\int_{\R^d}\int_{\R^d} p_t(z)p_s(x-y-z) u_0(y)dydz\right)\\
&=&\int_{\R^d}\int_{\R^d} \p_t(p_t(z))p_s(x-y-z)u_0(y) dydz
\end{eqnarray*}
and so
\begin{equation}\label{ineq24}
\p_t v(t,x) = \int_{\R^d}\int_{\R^d} \p_t(p_t(z))p_s(x-y-z)u_0(y) dydz.
\end{equation}
On the other hand,
\begin{eqnarray*}
(-\Delta)^{\frac{d}{2}} v(t,x) &=& \int_{\R^d}\int_{\R^d} (-\Delta_x)^{\frac{d}{2}} p_t(x-y-z)(p_s(z))u_0(y)dydz\\
&=& \int_{\R^d}\int_{\R^d}\left((-\Delta_x)^{\frac{d}{2}}p_t\right)(x-y-z)p_s(z) u_0(y)dydz.
\end{eqnarray*}
Combining with \eqref{ineq24} we obtain
\[
\left(\p_t+(-\Delta)^{\frac{d}{2}}\right)v = \int_{\R^d}\int_{\R^d} \left(\p_t+(-\Delta)^{\frac{d}{2}}\right)p_t(z) p_s(x-y-z)u_0(y) dydz =0.
\]
Finally, note that
\begin{eqnarray*}
\lim_{t\to 0} v(t,x) &=& \lim_{t\to 0}\int_{\R^d}p_t(z)\left(\int_{\R^d}p_s(x-y-z)u_0(y)dy\right) dz\\
&=& \int_{\R^d} p_s(x-y)u_0(y)dy\\
&=& u(s,x).
\end{eqnarray*}
Therefore $u(s+\cdot,\cdot),v(\cdot,\cdot)$ both solve
\[
\left\{
\begin{matrix}
\p_t w + (-\Delta)^{\frac{d}{2}}w=0,\\
w(0,x)=u(s,x)
\end{matrix}
\right.
\]
This implies that $v(t,x) = u(s+t,x).$ Since the initial condition $u_0 \in C_c^\infty$ was arbitrary, This completes the proof.
\end{proof}

For $\alpha > 0$, one can check using the Kolmogorov continuity theorem that $h_{\alpha,\beta}$ is a continuous function. Indeed, for any two points $v,w$ we have
\begin{eqnarray*}
\mathrm{Var}(h_{\alpha,\beta}(v) - h_{\alpha,\beta}(w)) &=& 2(\mathrm{Var}(h_{\alpha,\beta}(v))-\mathrm{Cov}(h_{\alpha,\beta}(v),h_{\alpha,\beta}(w)))\\
&=& 2 \int_{\alpha^2}^{\beta^2} p_s(v,v) - p_s(v,w) ds \lesssim \left(\int_{\alpha^2}^{\beta^2} \norm{\nabla p_s}_{L^\infty}ds\right)\vert v-w\vert
\end{eqnarray*}
For $\alpha = 0$, we interpret $h_{0,\beta}$ as a random generalized function.
\begin{comment}
 For the fact that $h_{0,\beta}$ is well defined as a random generalized function, see \cite[Lemma 3.1]{dg-lqg-dim}. This lemma in fact gives the stronger statement that $h_{0,\beta}$ can be coupled with a log-correlated Gaussian field in such a way that their difference is a continuous function. This allows us to define the LQG metric $D_{h_{0,\beta}}$.
\end{comment}

\begin{comment}

We split the log-correlated Gaussian field $h$ into two parts:
\[
h=h_{0,\beta}+h_{\beta,\infty},
\]
where
\[
h_{\beta,\infty}=\int_{\beta^2}^\infty p_{\frac{t}{2}}*W dt,
\]
and
\[
h_{0,\beta}=h-h_{\beta,\infty}.
\]
Here $p_t$ denotes the heat kernel, defined by
\[
p_t(x) = \frac{1}{2\pi t} e^{-\frac{\vert x\vert^2}{2t}}.
\]
\end{comment}

We collect some basic properties of the family $\{h_{0,\beta}\}_{\beta>0}$ we will need later on.
\begin{lemma}\label{staterg}
    The family $\{h_{0,\beta}:\beta > 0\}$ has the following properties:
    \begin{itemize}
    \item[-] If $\phi : \R^d \to \R^d$ is a rotation, reflection, or translation, then $\{h_{0,\beta} \circ \phi : \beta >0 \}$ has the same law as $\{h_{0,\beta} : \beta >0\}$.
    \item[-] If $\delta \in [0,1)$ we have that $\{  h_{0,\delta \beta} (\delta \cdot):\beta >0 \}$ and $\{h_{0,\beta}:\beta >0\}$ agree in law.
    \item[-] If $0\leq \beta_1\leq \beta_2\leq \beta_3\leq \beta_4 < \infty$ then $h_{0,\beta_2}-h_{0,\beta_1}$ and $h_{0,\beta_4}-h_{0,\beta_3}$ are independent. %In particular $h_{0,\beta}$ and $h_{\beta,\infty}$ are independent.
    \end{itemize}
    Finally, $h_{0,1}$ is ergodic with respect to translations of $\R^d$. 
\end{lemma}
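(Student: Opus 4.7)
The plan is to read each of the three bulleted properties directly off the white-noise representation~\eqref{whitenoisedecomp}, and then treat the ergodicity statement as a separate computation based on the covariance kernel.

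For the isometry invariance, I would change variables $x \mapsto \phi^{-1}(x)$ in the inner integral defining $h_{0,\beta}(\phi(y))$. Since $\phi$ preserves Euclidean distance, $p_{t/2}(x-\phi(y))$ becomes $p_{t/2}(\phi^{-1}(x)-y)$, and since $\phi$ also preserves Lebesgue measure on $\mathbb{C}$, the pushforward of $W$ under $(x,t)\mapsto(\phi^{-1}(x),t)$ is again a space-time white noise. The same substitution is applied simultaneously to every $\beta$, giving the joint-in-$\beta$ identity in law. The independence bullet is equally immediate from the representation: $h_{0,\beta_2}-h_{0,\beta_1}$ is a measurable function of $W$ restricted to $\mathbb{C} \times [\beta_1^2,\beta_2^2]$ and $h_{0,\beta_4}-h_{0,\beta_3}$ of $W$ restricted to $\mathbb{C} \times [\beta_3^2,\beta_4^2]$, so disjointness of the time strips combined with the defining independence of white noise on disjoint subsets of $\mathbb{C}\times\mathbb R_+$ finishes the argument.

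For the scaling bullet, I would substitute $u = x/\delta$ and $s = t/\delta^2$ in the integral defining $h_{0,\delta\beta}(\delta y)$. The heat kernel picks up a factor $\delta^{-2}$ from the identity $p_{\delta^2 s/2}(\delta(u-y)) = \delta^{-2} p_{s/2}(u-y)$, while the rescaled random measure $\widetilde W(du,ds) := \delta^{-2} W(\delta\,du,\delta^2\,ds)$ is again a space-time white noise (a one-line variance check against a test function confirms this). The two factors of $\delta^{\pm 2}$ cancel, the upper time limit $(\delta\beta)^2$ becomes $\beta^2$, and one reads off $h_{0,\delta\beta}(\delta\,\cdot) \eqD \sqrt{\pi} \int_0^{\beta^2}\!\int_{\mathbb C} p_{s/2}(u - \cdot)\,\widetilde W(du,ds)$ jointly in $\beta$, which is the required equality of laws.

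The main step, and the main conceptual obstacle, is ergodicity of $h_{0,1}$ under the action of $\mathbb R^2$ by translations. Using the It\^o isometry together with the semigroup identity $p_{t/2}\ast p_{t/2} = p_t$, I would compute that the covariance kernel of the stationary Gaussian field $h_{0,1}$ is
\begin{equation*}
K(z) \;=\; \pi \int_0^1 p_t(z)\,dt,
\end{equation*}
which decays faster than any Gaussian as $|z|\to\infty$ (the dominant contribution comes from $t$ near $1$). Hence for any compactly supported test functions $\varphi_1,\varphi_2$, the covariance of $\langle h_{0,1},\varphi_1\rangle$ and $\langle h_{0,1},\varphi_2(\cdot-z)\rangle$ tends to zero as $|z|\to\infty$. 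This is the standard criterion for a translation-stationary Gaussian generalized function to be mixing (equivalently, for its spectral measure to have no atoms), and mixing implies ergodicity. The fussiness here lies in phrasing mixing at the level of distributional pairings rather than pointwise values and citing the correct Gaussian mixing criterion, but the analytic input is just the Gaussian decay of $K$.
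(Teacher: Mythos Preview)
Your treatment of the three bulleted properties matches the paper's exactly: the paper simply says these are ``immediate from the definition'' and cites \cite{ding-goswami-watabiki}, while you spell out the change-of-variables details. No substantive difference there.

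For ergodicity the two arguments diverge. The paper invokes a one-line inheritance principle: the space-time white noise $W$ is ergodic under spatial translations, and $h_{0,1}$ is a measurable, translation-equivariant functional of $W$, so ergodicity passes to $h_{0,1}$. Your route instead computes the covariance kernel $K(z)=\pi\int_0^1 p_t(z)\,dt$ explicitly, checks it tends to zero as $|z|\to\infty$, and then appeals to the Maruyama--Grenander-type criterion that a stationary Gaussian field with vanishing long-range covariance is mixing. Both are valid; the paper's version is shorter but leans on ergodicity of white noise as a black box, whereas yours is more self-contained at the cost of needing the Gaussian mixing criterion (and, as you note, some care in stating mixing for distribution-valued fields). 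One minor imprecision: $K(z)$ decays like $|z|^{-2}e^{-|z|^2/2}$, i.e.\ like a \emph{specific} Gaussian, not ``faster than any Gaussian''; this does not affect the argument since all you need is $K(z)\to 0$.
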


The invariance and independence properties are immediate from the definition, see, e.g., Section 3.2 of \cite{ding-goswami-watabiki} for the two dimensional case. For the last statement, note that the white noise $W(dx,dt)$ is ergodic with respect to spatial translations, and therefore so is $h_{0,\beta}.$
An important consequence is the following scaling property of $D_{h_{0,\beta}}.$

\begin{lemma}\label{whitenoiserescaling}
Let $\beta>0$. Then $\{D_{h_{0,\beta}}(x,y) : x,y\in\R^d\}$ has the same law as
\[
\left\{ \beta^{\xi Q} D_{h_{0,1}}(\beta^{-1} x,\beta^{-1} y) : x,y\in \R^d \right\}
\]
\end{lemma}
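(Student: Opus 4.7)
The plan is to combine the intrinsic spatial rescaling of the field $h_{0,\beta}$ (provided by Lemma \ref{staterg}) with the coordinate change and Weyl scaling properties of the LQG metric recalled in the Preliminaries. Both ingredients are already available, so the argument reduces to bookkeeping with the scalings; I do not anticipate any real obstacle beyond being careful with the direction of the spatial dilation and the extraction of the $\beta^{\xi Q}$ factor.

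First, I would use the second bullet of Lemma \ref{staterg} to identify the law of $h_{0,\beta}$ with a spatial rescaling of $h_{0,1}$. Applied at a single index with $\delta \in (0,1)$, it gives $h_{0,\delta}(\delta\,\cdot)\eqD h_{0,1}(\cdot)$; rearranging yields
\[
h_{0,\beta}(z)\eqD h_{0,1}(z/\beta) \quad \text{as random generalized functions on } \mathbb{C},
\]
for $\beta\in(0,1)$. The case $\beta\geq 1$ follows from the same statement applied with $\delta=1/\beta$ and index $\beta_0=\beta$, so the above identity holds for all $\beta>0$. Because $D_h$ is a measurable function of $h$, this equality in distribution transfers to the metric:
\[
\{D_{h_{0,\beta}}(x,y):x,y\in\mathbb{C}\}\eqD \{D_{h_{0,1}(\cdot/\beta)}(x,y):x,y\in\mathbb{C}\}.
\]

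Second, I would apply the coordinate change property to the field $\tilde h:=h_{0,1}$ with $\lambda=1/\beta$. This gives
\[
D_{\tilde h}(\lambda x,\lambda y)=D_{\tilde h(\lambda\,\cdot)+Q\log\lambda}(x,y),
\]
and Weyl scaling turns the constant shift $Q\log\lambda$ into the multiplicative factor $e^{\xi Q\log\lambda}=\lambda^{\xi Q}$. Solving for $D_{\tilde h(\lambda\,\cdot)}$ and then substituting $\lambda=1/\beta$ yields
\[
D_{h_{0,1}(\cdot/\beta)}(x,y)=\beta^{\xi Q}D_{h_{0,1}}(x/\beta,y/\beta).
\]
Plugging this into the previous display produces the claimed equality in law.

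The only steps that require attention are (i) checking that the direction of the rescaling coming from Lemma \ref{staterg} matches the direction demanded by the coordinate change, so that the rescaled field $h_{0,1}(\cdot/\beta)$ indeed has the same law as $h_{0,\beta}$, and (ii) verifying that the $\lambda^{\xi Q}$ factor is correctly obtained by combining coordinate change with Weyl scaling applied to the constant function $Q\log\lambda$. Both are routine, and no further input (e.g.\ properties specific to the Liouville measure or estimates on $D_h$) is required.
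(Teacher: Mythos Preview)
Your proposal is correct and follows essentially the same approach as the paper's proof: both combine the scaling property of Lemma~\ref{staterg} with the LQG coordinate change formula and Weyl scaling, differing only in the order of the steps. Your explicit treatment of the case $\beta\geq 1$ (by applying Lemma~\ref{staterg} with $\delta=1/\beta$) is a slight refinement, since the paper simply asserts the conclusion for all $\beta>0$.
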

\begin{proof}
This is a direct consequence of the LQG coordinate change formula, Weyl scaling and Lemma \ref{staterg}. Indeed, by the LQG coordinate change formula, a.s.\ 
\[
D_{h_{0,1}}(\beta^{-1} x,\beta^{-1} y) = D_{h_{0,1}(\beta^{-1} \cdot) + Q\log \left(\beta^{-1}\right) } (x,y) ,\quad \forall x,y\in\R^d .
\]
By Weyl scaling and Lemma \ref{staterg}, the metric on the right has the same law as $\beta^{-\xi Q}D_{h_{0,\beta}} $.
\end{proof}

\begin{lemma}\label{samelaw}
For each $\beta > 0$, $h_{\beta,\infty}$ is well-defined as a random function viewed modulo additive constant and agrees in law with $h \ast p_{\beta^2/2}$, modulo additive constant.
\end{lemma}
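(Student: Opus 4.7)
The plan is to deduce the law-equality via the white-noise representation of the whole-plane GFF together with the heat semigroup identity $p_{t/2} \ast p_{\beta^2/2} = p_{(t+\beta^2)/2}$ and the stationarity in time of the white noise $W$; well-definedness will follow from an Itô-isometry Cauchy computation.

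For the first assertion (well-definedness modulo additive constant), fix a reference point $y_0 \in \mathbb{C}$. By Itô's isometry, together with the identity $\int_{\mathbb{C}}[p_{t/2}(x-y) - p_{t/2}(x-y_0)]^2\,dx = \frac{1}{\pi t}\bigl(1 - e^{-|y-y_0|^2/(2t)}\bigr)$ (coming from $p_{t/2} \ast p_{t/2} = p_t$), for $\beta \leq R_1 \leq R_2 < \infty$,
\[
\mathrm{Var}\Bigl((h_{\beta,R_2} - h_{\beta,R_1})(y) - (h_{\beta,R_2} - h_{\beta,R_1})(y_0)\Bigr) = \int_{R_1^2}^{R_2^2} \frac{1 - e^{-|y-y_0|^2/(2t)}}{t}\,dt,
\]
which is $O(|y-y_0|^2/R_1^2)$ locally uniformly in $y$, using $1-e^{-u}\leq u$. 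A Kolmogorov continuity argument analogous to the one cited for finite $R$ then upgrades this $L^2$-Cauchy statement to almost-sure local uniform convergence of $h_{\beta,R} - h_{\beta,R}(y_0)$ as $R \to \infty$, producing a random continuous function $h_{\beta,\infty}$ well-defined modulo an additive constant.

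For the identification, since $h - h^{\mathbb{C}}$ is continuous, the two sides are shifted by the same continuous function $(h - h^{\mathbb{C}}) \ast p_{\beta^2/2}$, so it suffices to treat $h = h^{\mathbb{C}}$. The whole-plane GFF modulo constant admits the white-noise representation as the $R \to \infty$ distributional limit of $h_{0,R}$ (essentially Lemma~3.1 of \cite{dg-lqg-dim}), so $h^{\mathbb{C}} \ast p_{\beta^2/2}$ agrees in law, modulo constant, with $\lim_{R \to \infty} h_{0,R} \ast p_{\beta^2/2}$. By Fubini and the semigroup identity,
\[
h_{0,R} \ast p_{\beta^2/2}(y) = \sqrt{\pi}\int_0^{R^2}\int_{\mathbb{C}} p_{(t+\beta^2)/2}(y-x)\,W(dx,dt).
\]
The change of variable $s = t+\beta^2$ turns this into $\sqrt{\pi}\int_{\beta^2}^{R^2+\beta^2}\int_{\mathbb{C}} p_{s/2}(y-x)\,\tilde W(dx,ds)$ with $\tilde W(dx,ds) := W(dx, d(s-\beta^2))$, and $\tilde W$ is again a space-time white noise by the time-stationarity of $W$. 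Hence, in law modulo constant, the right-hand side equals $h_{\beta,\sqrt{R^2+\beta^2}}(y)$, and sending $R \to \infty$ (using the well-definedness established in the first step) identifies it with $h_{\beta,\infty}(y)$.

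The main obstacle is the careful bookkeeping with the modulo-additive-constant quotient: since $h^{\mathbb{C}}$ and $\lim_{R\to\infty} h_{0,R}$ are only well-defined modulo a constant, each of the steps (convolution, Fubini, time-shift) must be checked to respect this equivalence. These verifications are routine -- convolution with the probability kernel $p_{\beta^2/2}$ fixes constants, and the time shift preserves the joint law of the increments -- but they are what prevents the lemma from being an almost-sure identity.
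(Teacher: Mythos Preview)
Your proof is correct and takes a genuinely different route from the paper's. The paper proceeds by direct variance computation: it evaluates $\mathrm{Var}\bigl(\int f\,h_{\beta,R}\bigr)$ for mean-zero test functions $f$ via the It\^o isometry and the heat-kernel convolution identity, passes to the Fourier side with Plancherel, and obtains the closed-form limit kernel $\frac{1}{2\pi|\zeta|^2}e^{-2\pi^2\beta^2|\zeta|^2}$; it then separately computes $\mathrm{Var}\bigl(\int f\,(h^{\mathbb C}\ast p_{\beta^2/2})\bigr)$ from the explicit covariance $G$ and shows (again via Plancherel and the Fourier transform of the Green kernel) that the two match. Your argument instead exploits the semigroup identity $p_{t/2}\ast p_{\beta^2/2}=p_{(t+\beta^2)/2}$ together with the time-stationarity of $W$ to identify $h_{0,R}\ast p_{\beta^2/2}$ directly with $h_{\beta,\sqrt{R^2+\beta^2}}$ in law, bypassing all Fourier analysis. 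This is more conceptual and considerably shorter. The one caveat is that you import the identification $h_{0,\infty}\overset{d}{=}h^{\mathbb C}$ (modulo constant) as a known input; in the paper's logical order this fact is instead \emph{derived from} the present lemma by sending $\beta\to 0$. The white-noise representation of the whole-plane GFF is indeed established independently in the literature, so your appeal is legitimate, but within the internal logic of this paper you should either cite that external source precisely or note that your argument reorganizes the dependencies.
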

\begin{proof}
Let $f:\R^d\to \R^d$ be compactly supported and smooth such that $\int_{\R^d} f(z) = 0.$ Let $R>0$ be large. Then
\[
\mathrm{Var}\left(\int_{{\R^d}} f(z) h_{\beta,R}(z)dz\right) = \mathbb{E}\left(\left(\int_{\R^d} f(z) h_{\beta,R}(z)dz\right)^2\right)
\]
since
\[
\mathbb{E} \left(\int_{\R^d} f(z) h_{\beta,R}(z)dz \right)=0.
\]

Therefore,
\begin{eqnarray} \label{eqn:wn-var0}
&&\mathrm{Var}\left(\int_{\R^d} f(z) h_{\beta,R}(z)dz\right) \notag \\
&&= \mathbb{E}\left(\int_{\R^d}\int_{\R^d} f(x)f(y)h_{\beta,R}(x)h_{\beta,R}(y) dxdy\right) \notag \\
&&= \mathbb E\left( \int_{\R^d}\int_{\R^d}\int_{\R^d}\int_{\beta^2}^{R^2}\int_{\R^d}\int_{\beta^2}^{R^2} f(x)f(y) p_{t/2}(x-u)p_{s/2}(y-v) W(du,dt) W(dv,ds) dxdy \right)\notag  \\
&&=\mathbb E\left( \int_{\R^d}\int_{\R^d}\int_{\R^d}\int_{\beta^2}^{R^2}\int_{\R^d}\int_{\beta^2}^{R^2} f(x)f(y) p_{t/2}(x-y-u)p_{s/2}(-v) W(du,dt) W(dv,ds) dxdy \right)\notag  \\
&&=\int_{\R^d}\int_{\R^d} f(x)f(y) K_R(x-y)dxdy,
\end{eqnarray}
where 
\[
K_R(x):= \mathbb E\left( \int_{\R^d}\int_{\beta^2}^{R^2}\int_{\R^d}\int_{\beta^2}^{R^2} p_{t/2}(x-u)p_{s/2}(-v)W(du,dt)W(dv,ds) \right) .
\]
Using the definition of the white noise we obtain
\[
K_R(x)=\int_{\R^d}\int_{\beta^2}^{R^2} p_{t/2}(x-u)p_{t/2}(u)dtdu
\]
Using Lemma \ref{kernelconcatenation} we have
\[
\int_{\R^d} p_t(z_1-x)p_t(x-z_2) dx = p_{2t} (z_1-z_2),
\]
and so
\begin{equation} \label{eqn:kernel-time}
K_R(x)= \int_{\beta^2}^{R^2} p_t(x) dt.
\end{equation}
Let $\mathcal F$ denote the Fourier transform, that is
\[
\mathcal{F}(f)(\zeta)=\hat{f}(\zeta) := \int_{\R^d} e^{-2\pi i x\cdot \zeta} f(x).
\]
Using Plancherel, \eqref{eqn:kernel-time}, and the fact that $\mathcal{F}(f\ast g)=\hat{f}\hat{g}$ we obtain
\begin{equation} \label{eqn:kernel-plancherel}
\int_{\R^d}\int_{\R^d} f(x)f(y)K_R(x-y)dxdy = \int_{\R^d} \hat{K}_R(\zeta) \vert \hat{f}(\zeta)\vert^2 d\zeta.
\end{equation}
Note that
\begin{equation}\label{eq102}
\hat{p}_t(\zeta) =e^{-(2\pi)^dt\vert \zeta\vert^d},
\end{equation}
and so in particular it is integrable in both space and time. Hence
\begin{equation} \label{eqn:kernel-fourier}
\lim_{R\to\infty} \hat{K}_R(\zeta) = \lim_{R \to \infty} \int_{\beta^2}^{R^2} \hat{p}_t(\zeta) dt = \int_{\beta^2}^\infty \hat{p}_t(\zeta) dt= \int_{\beta^2}^\infty e^{-2^d\pi^dt\vert \zeta\vert^d} dt=\frac{1}{2^d\pi^d\vert \zeta\vert^d}e^{-\frac{2^d\pi^d\beta^2\vert \zeta\vert^d}{2}}.
\end{equation}
By plugging~\eqref{eqn:kernel-plancherel} into~\eqref{eqn:wn-var0}, then using~\eqref{eqn:kernel-fourier}, we obtain
\begin{equation} \label{eqn:wn-var}
\lim_{R\to\infty} \mathrm{Var}\left(\int_{\R^d} f(z) h_{\beta,R}(z)dz\right) 
=  \int_{\R^d} \frac{1}{2^d\pi^d\vert \zeta\vert^d}e^{-\frac{2^d\pi^d\beta^2\vert \zeta\vert^d}{2}}\vert \hat{f}(\zeta)\vert^2   d\zeta.
\end{equation}
Note that the above integral is well defined since $f$ is smooth and $f(0)=0,$ hence $\frac{\vert \hat{f}(\zeta)\vert^2}{\vert \zeta\vert^2}$ is integrable at $0.$ This shows that $f\mapsto \int_{\R^d} f(z) h_{\beta,\infty}(z) \,dz$ is well-defined as a centered Gaussian process on the set of smooth compactly supported test functions with integral zero. 
We will now show that this process agrees in law with $f\mapsto \int_{\R^d} f(z) h\ast p_{\beta^2/2}(z) \,dz$.

Recalling that $h_1(0) := \int_{\p B_{1}(0)}h d\sigma = 0$ and that $\int_{\R^d} f(z)dz = 0,$ we obtain
\begin{eqnarray}\label{lhseq}
&&\mathrm{Var} \left( \int_{\R^d} f(z) h\ast p_{\frac{\beta^2}{2}}(z) dz\right)\nonumber\\
&&=\mathrm{Var} \left( \int_{\R^d} f(z) h^0\ast p_{\frac{\beta^2}{2}}(z) dz\right)\nonumber\\
&&= \int_{\R^d}\int_{\R^d} f(x)f(y)\mathbb{E}(h^0\ast p_{\frac{\beta^2}{2}} (x) h^0\ast p_{\frac{\beta^2}{2}}(y)) dxdy\nonumber\\
&&= \int_{\R^d}\int_{\R^d} f(x)f(y) \left(\int_{\R^d} \int_{\R^d} p_{\frac{\beta^2}{2}} (x-u) p_{\frac{\beta^2}{2}}(y-v)\mathbb{E}(h^0(u)h^0(v))dudv\right) dxdy \nonumber\\
&& = \int_{\R^d} \int_{\R^d} f(x) f(y) \left( \int_{\R^d}\int_{\R^d} p_{\frac{\beta^2}{2}} (x-u) p_{\frac{\beta^2}{2}}(y-v) G(u,v) dudv\right) dxdy,
\end{eqnarray}
where $G(x,y)$ is given by \eqref{eqn:cov}. Note that by translations invariance we have
\begin{eqnarray*}
&&\int_{\R^d} \int_{\R^d} f(x)f(y) \left(\int_{\R^d} \int_{\R^d} p_{\beta^2/2}(x-u) p_{\beta^2/2}(y-v) \log(\max\{\vert u\vert,1\})dudv\right) dxdy\\ &&=  \int_{\R^d} \int_{\R^d} f(x)f(y) \left(\int_{\R^d} p_{\beta^2/2}(x-u) \log(\max\{\vert u\vert,1\})du\right)\left(\int_{\R^d} p_{\beta^2/2}(v) dv\right) dxdy\\
&&= \left(\int_{\R^d} f(y)dy\right) \int_{\R^d} f(x) \left(\int_{\R^d} p_{\beta^2/2}(x-u) \log(\max\{\vert u\vert,1\})du\right)\left(\int_{\R^d} p_{\beta^2/2}(v) dv\right) dx\\
&&=0,
\end{eqnarray*}
where in the last line we used that $\int_{\R^d} f=0.$ Therefore
\begin{eqnarray*}
&&\mathrm{Var} \left( \int_{\R^d} f(z) h\ast p_{\frac{\beta^2}{2}}(z) dz\right)\nonumber\\
&& = \int_{\R^d} \int_{\R^d} f(x) f(y) \left( \int_{\R^d}\int_{\R^d} p_{\frac{\beta^2}{2}} (x-u) p_{\frac{\beta^2}{2}}(y-v) \log\left(\frac{1}{\vert u-v\vert}\right) dudv\right) dxdy, 
\end{eqnarray*}
Let
\[
F_x(v):=\int_{\R^d} p_{\beta^2/2}(x-u)\log\left(\frac{1}{\vert u-v\vert}\right)du.
\]
Note that $F_x=p_{\beta^2/2}(x-\cdot) \ast \log\left(\frac{1}{\vert \cdot\vert}\right).$ Hence
\[
\hat{F}_x(\zeta) = \mathcal{F}\left(\log\left(\frac{1}{\vert \cdot\vert}\right)\right)\mathcal{F}({p}_{\beta^2/2}(x-\cdot))(\zeta) = \frac{1}{2^d\pi^d \vert \zeta\vert^d}\mathcal{F}({p}_{\beta^2/2}(x-\cdot))(\zeta) = \frac{1}{2^d\pi^d\vert \zeta\vert^d}\hat{p}_{\beta^2/2}(\zeta)e^{-2\pi i x \cdot \zeta}
\]
Then
\begin{eqnarray*}
&&\int_{\R^d} \int_{\R^d} f(x) f(y) \left( \int_{\R^d}\int_{\R^d} p_{\frac{\beta^2}{2}} (x-u) p_{\frac{\beta^2}{2}}(y-v) \log\left(\frac{1}{\vert u-v\vert}\right)dudv\right) dxdy\\
&&= \int_{\R^d}\int_{\R^d} f(x)f(y) \left(\int_{\R^d} F_x(v)p_{\beta^2/2}(y-v) dv\right)dxdy\\
&&=\int_{\R^d}\int_{\R^d} f(x)f(y) \left(\int_{\R^d}\hat{F}_x(\zeta)e^{2\pi iy\cdot \zeta} \bar{\hat{p}}_{\beta^2/2}(\zeta) d\zeta \right) dxdy \quad \text{(Plancherel)}\\
&&=\frac{1}{2^d\pi^d}\int_{\R^d}\int_{\R^d} f(x)f(y) \left(\int_{\R^d} \frac{1}{\vert \zeta\vert^d}e^{2 \pi i(y-x)\cdot\zeta} \vert \hat{p}_{\beta^2/2}(\zeta)\vert^2 d\zeta \right) dxdy\\
&&=\frac{1}{2^d\pi^d}\int_{\R^d} \vert\hat{f}(\zeta)\vert^2 \frac{1}{\vert \zeta\vert^d} \vert\hat{p}_{\beta^2/2}(\zeta)\vert^2d\zeta \quad \text{(Defn. of Fourier transform)} \\ 
&&=\int_{\R^d} \frac{\hat{f}(\zeta)^2}{2^d\pi^d\vert \zeta\vert^d} e^{-2^d\pi^d\beta^2\vert\zeta\vert^d}d\zeta.
\end{eqnarray*}
Combining this with \eqref{lhseq}, that
\[
\mathrm{Var} \left( \int_{\R^d} f(z) h\ast p_{\frac{\beta^2}{2}}(z) dz\right) =\int_{\R^d} \frac{\hat{f}(\zeta)^2}{2^d\pi^d\vert \zeta\vert^d} e^{-2^d\pi^d\beta^2\vert\zeta\vert^d}d\zeta.
\]
Now using \eqref{eqn:wn-var} we obtain
\[
\mathrm{Var} \left( \int_{\R^d} f(z) h\ast p_{\frac{\beta^2}{2}}(z) dz\right) = \lim_{R\to\infty} \mathrm{Var}\left(\int_{\R^d} f(z) h_{\beta,R}(z)dz\right) = \mathrm{Var}\left(\int_{\R^d} f(z) h_{\beta,\infty}(z)dz\right).
\]
Since the variances agree, this completes the proof.
\end{proof}

By sending $\beta \to 0$ in Lemma~\ref{samelaw}, we find that $h_{0,\infty}$ is well-defined as a random generalized function and agrees in law with $h$, modulo additive constant. We may therefore couple $h$ with the white noise $W$ in such a way that $\int_{\p B_1(0)} h_{\beta,\infty} =0,$ and
\begin{equation} \label{eqn:wn-decomp}
h = h_{0,\beta} + h_{\beta,\infty}  - (h_{0,\beta})_1(0),
\end{equation}
where we recall that $g_r(z_0)$ denotes the sphere average $\frac{1}{\vert B_r(z_0)\vert}\int_{\p B_r(z_0)} g(z) d\sigma (z).$ This decomposition will be crucial for our proofs.
We prove the following technical lemma, which will be used repeatedly in future calculations.
\begin{lemma}\label{avggoingto0}
We have that
\[
%\left(h_{\beta,\infty}\right)_1(0),
\left(h_{0,\beta}\right)_1(0) \to 0
\]
in probability as $\beta \to 0.$
\end{lemma}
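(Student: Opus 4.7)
My plan is to verify convergence in probability by showing the variance of the centered Gaussian random variable $(h_{0,\beta})_1(0)$ tends to $0$ as $\beta \to 0$. The key observation is that $(h_{0,\beta})_1(0)$ is a Wiener integral against the space-time white noise $W$: interchanging the (deterministic, bounded-kernel) circle-average functional with the stochastic integral in~\eqref{whitenoisedecomp} gives
\[
(h_{0,\beta})_1(0) = \sqrt{\pi}\int_0^{\beta^2}\int_{\mathbb{C}} \bar p_{t/2}(x)\, W(dx,dt),
\qquad \bar p_{t/2}(x) := \frac{1}{2\pi}\int_{\p B_1(0)} p_{t/2}(x-y)\,d\sigma(y).
\]
By the isometry for Wiener integrals, this is a centered Gaussian random variable with variance
\[
V(\beta) \;=\; \pi\int_0^{\beta^2}\int_{\mathbb{C}} \bar p_{t/2}(x)^2\,dx\,dt.
\]

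Next I would reduce the inner integral to something computable using the semigroup identity $\int_{\mathbb{C}} p_{t/2}(x-y_1)p_{t/2}(x-y_2)\,dx = p_t(y_1-y_2)$, which already appeared in the proof of Lemma~\ref{samelaw}. This gives
\[
\int_{\mathbb{C}} \bar p_{t/2}(x)^2\,dx \;=\; \frac{1}{(2\pi)^2}\int_{\p B_1}\int_{\p B_1} p_t(y_1-y_2)\,d\sigma(y_1)\,d\sigma(y_2).
\]
Parametrizing the two circles by angles $\theta_1,\theta_2$ and using $|y_1-y_2| = 2|\sin((\theta_1-\theta_2)/2)|$, the double integral reduces to $\int_0^{2\pi} (t^{-1})\,e^{-(1-\cos\theta)/t}\,d\theta$ up to a constant. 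For small $t$, a standard Laplace-method estimate (or the crude bound obtained by replacing the circle near the point closest to $y_1$ by its tangent line, which majorizes $\int_{\p B_1} p_t(y_1-\cdot)\,d\sigma$ by $(2\pi t)^{-1/2}$) yields
\[
\int_{\mathbb{C}} \bar p_{t/2}(x)^2\,dx \;=\; O(t^{-1/2}) \qquad \text{as } t\to 0.
\]

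Inserting this into the variance and integrating gives $V(\beta) = O\bigl(\int_0^{\beta^2} t^{-1/2}\,dt\bigr) = O(\beta) \to 0$ as $\beta\to 0$. Since $(h_{0,\beta})_1(0)$ is mean-zero Gaussian, Chebyshev's inequality then yields $(h_{0,\beta})_1(0)\to 0$ in probability as $\beta\to 0$.

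There is no real obstacle here; the only mildly delicate step is justifying the interchange of the circle-average with the stochastic integral (straightforward by approximation with Riemann sums of the angular parameter, or by a standard Fubini-type argument for Wiener integrals) and the small-$t$ asymptotics of the double circle integral of the heat kernel, which is an elementary Gaussian computation.
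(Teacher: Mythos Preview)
Your argument is correct. Both you and the paper reduce to showing that the variance of the centered Gaussian $(h_{0,\beta})_1(0)$ tends to zero, but the computations diverge after that point. The paper stays in Fourier space: it identifies the covariance kernel $K_{0,\beta}(z)=\pi\int_0^{\beta^2}p_t(z)\,dt$, applies Plancherel with the Fourier transform $\hat\mu$ of the unit-circle measure, and then observes that $\hat K_{0,\beta}(\zeta)=\tfrac{1}{2\pi|\zeta|^2}\bigl(1-e^{-2\pi^2\beta^2|\zeta|^2}\bigr)\to 0$ pointwise (the implicit dominated-convergence step uses that $\hat K_{0,\beta}(\zeta)\le\min\{\pi\beta^2,\tfrac{1}{2\pi|\zeta|^2}\}$ and the decay of $|\hat\mu|$). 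You instead stay in real space: the white-noise isometry and the semigroup identity reduce the variance to $\pi\int_0^{\beta^2}\!\tfrac{1}{(2\pi)^2}\iint_{\partial B_1\times\partial B_1}p_t(y_1-y_2)\,d\sigma\,d\sigma\,dt$, and a Laplace-type estimate on the inner double integral gives the explicit rate $V(\beta)=O(\beta)$. Your route is slightly more elementary (no Plancherel, no need to invoke the Fourier transform of the circle measure) and yields a quantitative bound; the paper's route is cleaner bookkeeping because it recycles the Fourier computations already set up in Lemma~\ref{samelaw}.
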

\begin{proof}
Recall by \eqref{eqn:wn-var0} and \eqref{eqn:kernel-time} that
\[
\mathrm{Var}\left( \int_{\R^d} f(z) h_{\alpha,\beta}(z)dz\right) = \int_{\R^d}\int_{\R^d} f(x)f(y) K_{\alpha,\beta}(x-y)dxdy,
\]
where
\[
K_{\alpha,\beta}:=\int_{\alpha^2}^{\beta^2} p_t(z)dz.
\]
Note that the above inequality also holds for $f$ replaced by a measure $\mu,$ that is
\[
\mathrm{Var}\left( \int_{\R^d} h_{\alpha,\beta}(z)d\mu(z)\right) = \int_{\R^d}\int_{\R^d} K_{\alpha,\beta}(x-y)d\mu(x)d\mu(y).
\]
Again using Plancherel as in Lemma \ref{samelaw}, we obtain that
\[
\mathrm{Var}\left( \int_{\R^d} f(z) h_{\alpha,\beta}(z)d\mu(z)\right) = \int_{\R^d} \hat{K}_{\alpha,\beta}(\zeta) \vert \hat{\mu}(\zeta)\vert^2 d\zeta.
\]
Taking $\mu$ such that $\int_{\R^d} f(z) d\mu(z) = \int_{\p B_1(0)}f(z)d\sigma(z),$ we note that $\mu$'s Fourier transform is well defined. Therefore we have
\[
\mathrm{Var}\left( (h_{0,\beta})_1(0)\right) = \int_{\R^d} \lim_{\alpha\to 0}\hat{K}_{\alpha,\beta}(\zeta) \vert \hat{\mu}(\zeta)\vert^2 d\zeta,
\]
and similarly we have
\[
\mathrm{Var}\left( (h_{\beta,\infty})_1(0)\right) = \int_{\R^d} \lim_{R\to \infty}\hat{K}_{\beta,R}(\zeta) \vert \hat{\mu}(\zeta)\vert^2 d\zeta.
\]
We compute
\[
\lim_{\alpha\to 0}\hat{K}_{\alpha,\beta}(\zeta) = \lim_{\alpha \to 0} \pi\int_{\alpha^2}^{\beta^2} \hat{p}_t(\zeta) dt = \int_0^{\beta^2} \pi\hat{p}_t(\zeta) dt= \int_0^{\beta^2} \pi e^{-\frac{4\pi^2t\vert \zeta\vert^2}{2}} dt.
\]
Noting that this last expression converges to $0$ as $\beta \to 0,$ we see that
\[
\lim_{\beta \to 0} \mathrm{Var}\left( (h_{0,\beta})_1(0)\right) = 0.
\]
%By \eqref{eqn:kernel-fourier} we have that
%\[
%\lim_{\beta \to 0} \mathrm{Var}\left( (h_{\beta,\infty})_1(0)\right) = \lim_{\beta \to 0} \int_{\R^d} \frac{\hat{\mu}(\zeta)^2}{2\pi\vert \zeta\vert^2} e^{-\frac{4\pi^2\beta^2\vert\zeta\vert^2}{2}}d\zeta = \int_{\R^d} \frac{\hat{\mu}(\zeta)^2}{2\pi\vert \zeta\vert^2}d\zeta= \mathbb{E}(h_1(0)^2)=0.
%\]
This completes the proof.
\end{proof}

\subsection{Cameron-Martin property for log-correlated Gaussian fields}\label{CMpropsection}

We will need the following property, analogous to the Cameron-Martin property for the GFF.
\begin{lemma}\label{CMhigherdim}
Suppose that $f \in C_0^\infty(\R^d)$ is a smooth compactly supported function. Then the law of $h+f$ is absolutely continuous with respect to the law of $h$ up to additive constant.
\end{lemma}
\begin{proof}
By Proposition 2.3 in \cite{imaginarymultchaos} we have the Karhunen–Lo\'eve expansion of $h$ (up to additive constant)
\[
h = \sum_{n \geq 0} A_n \sqrt{\lambda_n} \vphi_n(x)
\]
where $\{\lambda_n\}_{n \in \N}$ is the set of eigenvalues of the Hilbert-Schmidt operator associated with the kernel $p_t,$ $\{\vphi_n\}_{n \in \N}$ is an orthonormal basis of eigenvectors in $L^2,$ and $\{A_n\}_{n \geq 0}$ is an i.i.d. set of standard Gaussian random variables. Since $f \in C_c^\infty(\R^d),$ there exist coefficients $\{c_n\}_{n \in \N} \in \ell^2(\N)$ such that
\[
f = \sum_{n \in \N} c_n\sqrt{\lambda_n} \vphi_n.
\]
Indeed, since $f \in H_0^{\frac{d}{2}}(\R^d),$ we have
\[
f= \sum_{n \in \N} d_n \vphi_n
\]
for some coefficients $d_n,$ and so
\[
\infty> \int_{\R^d} \vert \Delta^{\frac{d}{4}}f(z)\vert^2 dz = \int_{\R^d} f(z) \Delta^{\frac{d}{2}}f(z)dz \geq \sum_{n\in \N} \vert d_n\vert \frac{\vert d_n\vert}{\lambda_n}
\]
and so $c_n = \frac{d_n}{\sqrt{\lambda_n}} \in \ell^2.$
Then
\[
h+f = \sum_{n \in \N} (A_n+c_n)\vphi_n.
\]
Therefore $h+f$ is absolutely continuous with respect to $h,$ with Radon-Nikodym derivative given by
\[
\prod_{n\in \N} e^{-\frac{c_n^2}{2}}e^{c_n x} = e^{-\frac{1}{2}\sum_{n \in \N} c_n^2 + \sum_{n \in \N} c_n A_n}
\]
which converges, since $c_n\in \ell^2.$ This completes the proof.
\end{proof}

\section{Aproximation theorem for Riemannian metrics}\label{flatproof}

In this section we will prove Theorem \ref{particular}.

In Section \ref{pfofsphere} we will then generalize this to the case of any measure on ${\R^d},$ still with the same metric.

We will need the following technical lemma about a moment bound for $\mathrm{diam}_{D_{h_{0,1}}} ([0,1]^d).$ In this we use the moment assumption \eqref{extramombound}.
\begin{lemma}\label{extramomboundlemma}
There exists a $p>d$ such that
\[
\mathbb{E}(\mathrm{diam}_{D_{h_{0,1}}}(B_1(0))^p) < \infty.
\]
\end{lemma}
\begin{proof}
Recall that $h = h_{0,1}+h_{1,\infty}.$ Therefore
\[
\mathrm{diam}_{D_{h_{0,1}}}([0,1]^d) \lesssim \mathrm{diam}_{D_h}([0,1]^d) e^{\xi \sup_{z \in [0,1]^d}h_{1,\infty}}.
\]
By the Borel TIS inequality, we have that $\sup_{z \in [0,1]^d}h_{1,\infty}$ has an exponential tail, and hence exponential moments of all orders. Therefore for any $d<p<\bar{p},$ we have
\[
\mathbb{E}(\mathrm{diam}_{D_{h_{0,1}}}([0,1]^d)^p) \lesssim \mathrm{E}(\mathrm{diam}_{D_h}([0,1]^d)^{\bar{p}})^{\frac{p}{\bar{p}}} \mathrm{E}\left(e^{p\left(\frac{\bar{p}}{p}\right)'\xi \sup_{z \in [0,1]^d}h_{1,\infty}}\right)^{\frac{1}{\left(\frac{\bar{p}}{p}\right)'}} < \infty
\]
where we used \eqref{extramombound} in the last line. This completes the proof.
\end{proof}
Theorem \ref{particular} will be proven by combining the following propositions.

\begin{prop} \label{approxeucdist} 
There exists $\nu > 0$ such that the following is true.
Let $\e>0.$ With probability going to $1$ as $\beta \to 0,$  
\begin{equation}\label{approxmetric}
\left\vert D_{h_{0,\beta}}(x,y)-\beta^{\xi Q-1}\nu\vert x-y\vert\right\vert\leq \e \beta^{\xi Q-1}
\end{equation}
for all $x,y\in [-R,R]^d.$ 
\end{prop}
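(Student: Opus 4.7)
The plan is to reduce the statement to a shape theorem for $D_{h_{0,1}}$ at large scales via the scaling relation in Lemma~\ref{whitenoiserescaling}, and then to establish this shape theorem by combining a subadditive ergodic theorem argument with first passage percolation concentration estimates as in~\cite{indepperc}.

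By Lemma~\ref{whitenoiserescaling}, we have
\[
\beta^{-(\xi Q - 1)} D_{h_{0,\beta}}(x,y) \;\overset{d}{=}\; \beta\, D_{h_{0,1}}(\beta^{-1} x , \beta^{-1} y) \qquad \text{for all } x,y \in \mathbb{C}.
\]
Writing $R = \beta^{-1}$ and substituting $u = Rx$, $v = Ry$, the estimate~\eqref{approxmetric} becomes the large-scale shape statement
\[
\sup_{u,v \in [-R, R]^2} \bigl| R^{-1} D_{h_{0,1}}(u, v) - \nu\, R^{-1}|u - v| \bigr| \leq \e
\]
with probability tending to $1$ as $R \to \infty$. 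Thus it suffices to prove this shape theorem for $h_{0,1}$.

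To identify the constant $\nu$, fix a direction $e \in \mathbb{S}^1$ and set $a_n := \mathbb{E}[D_{h_{0,1}}(0, ne)]$. By the triangle inequality for $D_{h_{0,1}}$ and the translation invariance from Lemma~\ref{staterg}, the sequence $(a_n)$ is subadditive, so Fekete's lemma yields $n^{-1}a_n \to \nu(e) \in [0, \infty)$. Rotational invariance (also Lemma~\ref{staterg}) gives $\nu(e) = \nu$ independent of $e$, and $\nu > 0$ follows from standard lower bounds on the LQG metric at macroscopic scales (see e.g.~\cite{dg-lqg-dim}). Combining subadditivity, stationarity, and the ergodicity statement in Lemma~\ref{staterg}, Kingman's subadditive ergodic theorem gives $n^{-1} D_{h_{0,1}}(0, ne) \to \nu$ almost surely, hence in probability.

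To upgrade pointwise convergence in a single direction to the desired uniform estimate over all pairs $u,v \in [-R,R]^2$, I would use the white noise independence properties: for $0 \leq \alpha_1 < \alpha_2 < \beta_1 < \beta_2$ the fields $h_{\alpha_1,\alpha_2}$ and $h_{\beta_1,\beta_2}$ are independent (Lemma~\ref{staterg}), and combined with the spatial localization of $p_{t/2}$ this yields approximate spatial independence of $h_{0,1}$ on well-separated regions. Via the first passage percolation concentration machinery of~\cite{indepperc}, this gives a polynomial-in-$R$ tail bound for $|R^{-1} D_{h_{0,1}}(u,v) - \nu R^{-1}|u-v||$ at each fixed pair. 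A union bound over an $\e R$-mesh of $[-R,R]^2$ handles the mesh pairs; a short-scale continuity bound, obtained by applying the same pointwise estimate at scale $\e R$, fills in distances between mesh points and adjacent $u,v$. The main obstacle will be step three, the quantitative concentration with polynomial error: the subadditive ergodic theorem alone gives no rate, so the key technical input is importing an independent-FPP style concentration estimate to a setting with only approximate independence, which is precisely what the white noise decomposition and the adaptation of~\cite{indepperc} are designed to supply.
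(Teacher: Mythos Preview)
Your reduction via Lemma~\ref{whitenoiserescaling} and identification of $\nu$ through Kingman's theorem and rotational invariance match the paper exactly. The issue is in your third step, where you mischaracterize what \cite{indepperc} provides. Boivin's paper does \emph{not} give concentration estimates or polynomial-in-$R$ tail bounds; it treats stationary ergodic (not i.i.d.) first passage percolation, where no rate of convergence is available in general. What \cite{indepperc} actually supplies is an ergodic upgrade: a maximal lemma of the form $\mathbb{P}(\sup_{|x|>1} D_{h_{0,1}}(0,x)/|x| > \lambda) \lesssim \lambda^{-2}$, combined with a Birkhoff-type cone density argument, which together promote the radial almost sure limit (Kingman along one direction) to almost sure convergence \emph{uniformly over all directions} from a fixed center. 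This is the paper's Lemma~\ref{fixedcenter}, and its proof follows Boivin's argument step by step using only stationarity, ergodicity, and second-moment bounds on unit-box diameters --- no spatial independence of $h_{0,1}$ is invoked.

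Once you have this almost sure statement, the final mesh argument is softer than you suggest. Scaling turns the a.s.\ statement into: for each fixed center $\bar{x}$, with probability $\to 1$ as $\beta \to 0$, the estimate $|D_{h_{0,\beta}}(\bar{x},x) - \nu\beta^{\xi Q-1}|\bar{x}-x|| \leq \e\beta^{\xi Q-1}$ holds for \emph{all} $x$ with $|x-\bar{x}|$ bounded below by some $\Psi(\beta) \to 0$. Since a $\delta$-mesh of $[-1,1]^2$ has finitely many centers, a union bound over centers and the triangle inequality finish the proof with no quantitative tail control needed. Your proposed route through per-pair concentration and approximate spatial independence would require substantial additional work that is neither in \cite{indepperc} nor in the paper.
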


\begin{prop} \label{approxeucmeas}
There exists a constant $C_0 > 0$ such that the following is true.
Let $c>0.$ With probability going to $1$ as $\beta \to 0,$ we have
\[
\mu_{h_{0,\beta}}([-R,R]^d) \leq C_0 \beta^{\gamma Q-2-c}.
\]
\end{prop}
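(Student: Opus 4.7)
The plan is to reduce the statement, via the scaling properties of $h_{0,\beta}$, to a first-moment estimate for the stationary field $\mu_{h_{0,1}}$ on a large box, and then to apply Markov's inequality. The first step uses the LQG coordinate change formula applied to the affine map $\phi(z)=\beta z$ (so that $\log|\phi'|=\log\beta$), combined with Weyl scaling for the measure:
\[
\mu_{h_{0,\beta}}([-1,1]^2) = \beta^{\gamma Q}\, \mu_{h_{0,\beta}(\beta\,\cdot\,)}([-\beta^{-1},\beta^{-1}]^2).
\]
Applying the second property in Lemma~\ref{staterg} with $\delta=\beta\in(0,1)$ gives $h_{0,\beta}(\beta\,\cdot\,)\overset{d}{=}h_{0,1}$, so the problem reduces to showing that $\mu_{h_{0,1}}([-\beta^{-1},\beta^{-1}]^2)\leq C_0\beta^{-2-c}$ with probability tending to $1$ as $\beta\to 0$. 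Note that $\gamma Q-2=\gamma^2/2>0$, so the target exponent $\gamma Q-2-c$ is strictly positive for small $c$, and the bound is genuinely informative.

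The next step is to establish that $\mathbb{E}[\mu_{h_{0,1}}(A)]\leq |A|$ for every bounded measurable $A\subset\mathbb{C}$. Starting from the white-noise construction~\eqref{whitenoisedecomp} and using the heat-semigroup identity $p_a\ast p_b=p_{a+b}$, the $\epsilon$-mollification $(h_{0,1})_\epsilon(z)=(h_{0,1}\ast p_{\epsilon^2/2})(z)$ admits the stochastic-integral representation
\[
(h_{0,1})_\epsilon(z)=\sqrt{\pi}\int_0^1\int_{\mathbb{C}} p_{(t+\epsilon^2)/2}(x-z)\,W(dx,dt),
\]
whose variance is $\tfrac{1}{2}\log(1+\epsilon^{-2})$. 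A direct computation then gives
\[
\mathbb{E}\!\left[\epsilon^{\gamma^2/2}\,e^{\gamma (h_{0,1})_\epsilon(z)}\right] = (1+\epsilon^2)^{\gamma^2/4} \longrightarrow 1 \quad \text{as }\epsilon\to 0,
\]
and Fatou's lemma applied to the GMC limit~\eqref{eqn:lqg-measure} defining $\mu_{h_{0,1}}$ yields the desired bound. Plugging $A=[-\beta^{-1},\beta^{-1}]^2$ into this estimate and applying Markov's inequality shows
\[
\mathbb{P}\!\left[\mu_{h_{0,1}}([-\beta^{-1},\beta^{-1}]^2)>\beta^{-2-c}\right]\leq 4\beta^c\to 0
\]
as $\beta\to 0$, which together with the first step gives the proposition with $C_0=1$.

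The main obstacle is the first-moment estimate in the second step: although the variance calculation is elementary, one has to verify that the particular heat-kernel mollification used in the calculation matches the limit definition~\eqref{eqn:lqg-measure} of $\mu_{h_{0,1}}$. Since we only need the one-sided inequality, Fatou's lemma suffices, which sidesteps the usual need for uniform-integrability arguments to upgrade an almost sure or in-probability limit to an $L^1$ limit. An alternative would be to invoke the standard GMC theory for log-correlated stationary Gaussian fields, which delivers the first-moment identity directly.
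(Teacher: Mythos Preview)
Your proof is correct and follows essentially the same approach as the paper: both reduce via the scaling relation $\mu_{h_{0,\beta}}([-1,1]^2)\overset{d}{=}\beta^{\gamma Q}\mu_{h_{0,1}}([-\beta^{-1},\beta^{-1}]^2)$ (coordinate change, Weyl scaling, and Lemma~\ref{staterg}) to a first-moment bound for $\mu_{h_{0,1}}$ on a large box, then apply Markov. The only difference is that the paper simply cites the finiteness of $\alpha=\mathbb{E}[\mu_{h_{0,1}}([0,1]^2)]$ from the GMC literature, whereas you compute the approximating first moment explicitly via the variance identity $\operatorname{Var}((h_{0,1})_\epsilon)=\tfrac12\log(1+\epsilon^{-2})$ and then invoke Fatou; this yields the sharper constant $C_0=1$ but is otherwise the same argument.
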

Let $f\mapsto \hat f$ denote the Fourier transform and let $f\mapsto \check{f}$ denote the inverse Fourier transform,
\[
\check{f}(x)= \int e^{2\pi i \zeta \cdot x} d\zeta.
\]
We will also need the following lemma:
\begin{prop}\label{consthinfty}
Let $\e>0,$ and suppose that $f$ is a function such that $\hat{f}$ has compact support and $\int_{\p B_1(0)} f(z) d\sigma =0.$ For small enough $\beta > 0$, it holds with positive probability that
\[
\norm{h_{\beta,\infty}-f}_{L^\infty([-1,1]^2)}<\e.
\]
\end{prop}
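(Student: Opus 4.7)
The plan is to apply the Cameron-Martin theorem for the whole-plane GFF $h$ to produce a deterministic shift whose induced effect on $h_{\beta,\infty}$ is exactly $f$, and then combine this with the fact that $0$ lies in the topological support of the centered Gaussian random element $h_{\beta,\infty}|_{[-1,1]^2} \in C([-1,1]^2)$.

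First, since $\hat f$ has compact support, I would define $g$ via $\hat g(\zeta) := \hat f(\zeta)/\hat p_{\beta^2/2}(\zeta)$; this $g$ is smooth, has compactly supported Fourier transform (so in particular finite Dirichlet energy), and satisfies $g \ast p_{\beta^2/2} = f$ by construction. Writing $c_\beta := (g)_1(0)$ and using that $\hat g \to \hat f$ uniformly on its compact support as $\beta \to 0$ (because $\hat p_{\beta^2/2} \to 1$ uniformly on compacts), we will have $c_\beta \to (f)_1(0) = 0$. The corrected function $g' := g - c_\beta$ then has zero circle average, finite Dirichlet energy, and $g' \ast p_{\beta^2/2} = f - c_\beta$.

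By Lemma~\ref{samelaw} the marginal law of $h_{\beta,\infty}$ agrees with that of $h \ast p_{\beta^2/2} - (h \ast p_{\beta^2/2})_1(0)$, so I can work in the latter coupling without changing the probability of the event in question. The Cameron-Martin theorem for $h$ (Proposition~2.9 of \cite{ig4}) then ensures that the law of $h + g'$ is absolutely continuous with respect to that of $h$; correspondingly the induced shift on $h_{\beta,\infty}$ is
\[
g' \ast p_{\beta^2/2} - (g' \ast p_{\beta^2/2})_1(0) = (f - c_\beta) - (-c_\beta) = f,
\]
using $(f)_1(0) = 0$. Because $h_{\beta,\infty}$ is a.s.\ smooth (Kolmogorov continuity), its restriction to $[-1,1]^2$ is a centered Gaussian random element of the separable Banach space $C([-1,1]^2)$, so $0$ lies in the topological support of its law and hence $\mathbb P\bigl(\norm{h_{\beta,\infty}}_{L^\infty([-1,1]^2)} < \e\bigr) > 0$. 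Absolute continuity with the shifted measure finally gives $\mathbb P\bigl(\norm{h_{\beta,\infty} - f}_{L^\infty([-1,1]^2)} < \e\bigr) > 0$.

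The main obstacle will be the bookkeeping of the additive constants imposed by the normalization $(h_{\beta,\infty})_1(0) = 0$: the correction $c_\beta$ in passing from $g$ to $g'$ must precisely cancel the $-c_\beta$ arising from subtracting the circle average of $g' \ast p_{\beta^2/2}$, so that the net shift on $h_{\beta,\infty}$ is exactly $f$ rather than $f$ plus a $\beta$-dependent drift. The hypothesis ``small enough $\beta$'' does not appear strictly necessary for this existence statement (the argument produces positive probability for every $\beta > 0$), and is presumably written this way with downstream quantitative applications in Section~\ref{flatproof} in mind.
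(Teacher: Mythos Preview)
Your proof is correct and takes a cleaner route than the paper's. Both arguments rest on the same two ingredients—Lemma~\ref{samelaw} identifying $h_{\beta,\infty}$ (up to law) with $h\ast p_{\beta^2/2}$ minus its unit-circle average, and a Cameron--Martin shift by a deconvolution of $f$—but you streamline the circle-average correction and the ``positive probability near zero'' step.

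The paper first uses a pigeonhole argument (Lemma~\ref{pigeoninevent}) to locate a deterministic $f_\beta$ with $\mathbb P(\|h_{\beta,\infty}-f_\beta\|_\infty<\e/4)>0$, then deconvolves $f-f_\beta$, and corrects the circle average of the resulting shift with a specially built bump function $\Psi_{K,\e}$ (Lemma~\ref{Psi}) that is concentrated on a thin annulus so that $\Psi_{K,\e}\ast p_{\beta^2/2}$ is uniformly small. Several additional inequalities (Lemma~\ref{smallcircavg}, \eqref{closefunctions}) are then chained together to close the estimate. By contrast, you subtract the constant $c_\beta=(g)_1(0)$ and observe that the two $c_\beta$'s cancel exactly in $\Phi(g')=g'\ast p_{\beta^2/2}-(g'\ast p_{\beta^2/2})_1(0)=f$, so no bump function or smallness estimate is needed; and you replace the pigeonhole step by the one-line fact that $0$ lies in the topological support of any centered Gaussian in a separable Banach space. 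As you note, this yields the conclusion for every $\beta>0$, not just small $\beta$. The only point worth making explicit is that $g'=g-c_\beta$ is indeed admissible for the paper's Cameron--Martin hypothesis: it is $C^1$, has $\nabla g'=\nabla g\in L^2$ (since $\widehat{\nabla g}=2\pi i\,\zeta\,\hat g$ is compactly supported), and has zero unit-circle average by construction—equivalently, $g'$ is the zero-circle-average representative of $[g]\in\dot H^1/\mathbb R$.
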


We will now assume the results of Propositions \ref{approxeucdist}, \ref{approxeucmeas}, and \ref{consthinfty}, and prove Theorem \ref{particular}.

\subsection{Proof of Theorem \ref{particular}}
\label{sec-particular}

%Recalling the white noise decomposition~\eqref{eqn:wn-decomp} and using Proposition \ref{consthinfty} and the Weyl scaling property of the LQG metric we see that with positive probability, for all $x,y \in [-1,1]^2$, 
%\[
%\left\vert D_{h}(x,y) - D_{h_{0,\beta}}(x,y;[-1,1]^2) \right\vert \leq \e.
%\]
%Using Proposition \ref{approxeucdist}, we obtain that with positive probability,
%\begin{equation} \label{eqn:use-approxeucdist}
%\left\vert D_{h}(x,y) -\beta^{\xi Q}\nu \vert x-y\vert\right\vert \leq \e.
%\end{equation}
%Using Proposition \ref{approxeucmeas} directly we obtain that if $A$ is a Borel set, with positive probability we have that
%\begin{equation} \label{eqn:use-approxeucmeas}
%\vert A \vert \leq \mu_{h}(A_\e)+\e \quad \text{and} \quad  \mu_{h}(A) \leq \vert A_\e\vert+\e. 
%\end{equation}
We have the following result, which follows from the same proof as Lemma 7.1 in \cite{df-lqg-metric}.
\begin{lemma}\label{limitmetric}
Let $U \subset {\R^d}$ be a bounded open set, and let $f:\bar{U} \to \R$ be a continuous function, and let $\vphi_1,\vphi_2:(0,\infty)\to (0,\infty)$ be increasing functions such that $\vphi_1(0^+)=\vphi_2(0^+)=0.$ Suppose $\{d_n\}_{n \geq 0}$ is a sequence of length metrics inducing the Euclidean topology on $\bar{U}$ such that for any $x,y\in \bar{U},$ and any $n\in\mathbb N$,
\[
\vphi_1(\norm{x-y}) \leq d_n(x,y) \leq \vphi_2(\norm{x-y}).
\]
Assume additionally that $d_n$ converges uniformly to a metric $d_\infty.$ Then $e^f \cdot d_n$ converges to $e^f \cdot d_\infty$ in the sense that for any $x,y \in \bar{U},$ we have
\[
\lim_{n \to \infty} e^f \cdot d_n(x,y) = e^f \cdot d_\infty(x,y).
\]
\end{lemma}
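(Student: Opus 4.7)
The plan is to prove the two inequalities $\limsup_n e^f\cdot d_n(x,y)\leq e^f\cdot d_\infty(x,y)$ and $\liminf_n e^f\cdot d_n(x,y)\geq e^f\cdot d_\infty(x,y)$ separately, reducing both to a length-convergence statement for fixed paths. By the Weyl formula~\eqref{weyl}, $e^f\cdot d(x,y)=\inf_P \int_P e^f\, ds_d$, where the infimum is over $d$-rectifiable paths from $x$ to $y$. The basic reduction is the following: for a fixed path $P$, partition its domain into finitely many subpaths $P_1,\dots,P_k$ on each of which $f$ has oscillation at most $\eta$. This is possible by uniform continuity of $f$ on the compact set $\bar U$, since the bi-Hölder bounds $\vphi_1\leq d_n\leq \vphi_2$ force $d$-diameter and Euclidean diameter to vanish together. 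On such a partition,
\[
\sum_i e^{\min_{P_i} f}\,\ell_{d}(P_i)\;\leq\;\int_P e^f\, ds_d\;\leq\;\sum_i e^{\max_{P_i} f}\,\ell_{d}(P_i).
\]
Once we know $\ell_{d_n}(P_i)\to \ell_{d_\infty}(P_i)$ for each $i$, taking $\eta\to 0$ yields $\int_P e^f\, ds_{d_n}\to \int_P e^f\, ds_{d_\infty}$.

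For the upper bound direction I would fix $\eta>0$, use the length-metric property of $d_\infty$ to pick a $d_\infty$-continuous path $P$ from $x$ to $y$ with $\int_P e^f\, ds_{d_\infty}< e^f\cdot d_\infty(x,y)+\eta$, and then apply the reduction to this single path. For the lower bound I would select near-minimizing paths $P_n$ for $e^f\cdot d_n(x,y)$, parametrize each by $d_n$-arclength, and use the lower Hölder bound $|P_n(s)-P_n(t)|\leq \vphi_1^{-1}(d_n(P_n(s),P_n(t)))\leq \vphi_1^{-1}(|s-t|)$ together with the uniform bound $\ell_{d_n}(P_n)\leq e^{\norm{f}_\infty}(e^f\cdot d_\infty(x,y)+1)$ (obtained from the already-established upper bound and the fact that $f$ is bounded) to extract via Arzel\`a--Ascoli a uniform subsequential limit $P_\infty:[0,L]\to \bar U$. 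Applying the reduction to $P_\infty$ then yields $\liminf_n \int_{P_n} e^f\, ds_{d_n} \geq \int_{P_\infty} e^f\, ds_{d_\infty}\geq e^f\cdot d_\infty(x,y)$.

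The main obstacle is the length-convergence lemma $\ell_{d_n}(P)\to \ell_{d_\infty}(P)$ for a fixed rectifiable path. The lower-semicontinuity half is immediate from the definition of length as a supremum: for every fixed partition $T$, $\sum d_n(P(t_i),P(t_{i+1}))\to \sum d_\infty(P(t_i),P(t_{i+1}))$ by uniform convergence of $d_n$, and taking the supremum in $T$ preserves the inequality. The upper-semicontinuity side $\limsup_n \ell_{d_n}(P)\leq \ell_{d_\infty}(P)$ is more delicate, because near-optimal partitions may have cardinality blowing up in $n$. Here I would crucially use the uniform upper bound $d_n\leq \vphi_2(|\cdot-\cdot|)$: the $d_n$-length of any sub-arc is dominated by a $\vphi_2$-based quantity integrable against the $d_\infty$-length measure of $P$, and a dominated-convergence argument applied to successive refinements of partitions (first letting $n\to \infty$ for each fixed refinement, then refining) yields the desired bound. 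This is the step where both the length-space structure and the bi-Hölder compatibility enter in an essential way.
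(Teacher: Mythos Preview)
The paper does not supply its own proof of this lemma; it is quoted verbatim from \cite[Lemma~7.1]{df-lqg-metric}, so there is no in-paper argument to compare against. I will therefore just assess the soundness of your sketch.

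Your lower-bound direction is fine: Arzel\`a--Ascoli plus lower semicontinuity of length under uniform convergence of both metrics and paths is the standard route and works exactly as you describe.

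The upper-bound direction has a genuine gap. You reduce to the claim that for a \emph{fixed} path $P$ one has $\ell_{d_n}(P)\to \ell_{d_\infty}(P)$, and you acknowledge that only the lower-semicontinuity half is immediate. The upper-semicontinuity half $\limsup_n \ell_{d_n}(P)\le \ell_{d_\infty}(P)$ is in fact \emph{false} under the stated hypotheses, and no dominated-convergence argument will rescue it. A clean counterexample: on $[0,1]^2$ take $d_\infty$ Euclidean and $d_n$ the Riemannian length metric of $e^{2\phi_n}|dz|^2$ with $\phi_n=\log 2$ on the strip $\{|y|\le n^{-2}\}$ and $\phi_n=0$ outside (smoothed). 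Then $|x-y|\le d_n(x,y)\le 2|x-y|$, one checks $d_n\to d_\infty$ uniformly (any two points can be joined by a detour of excess length $O(n^{-2})$ avoiding the strip), yet for $P$ the segment along the $x$-axis one has $\ell_{d_n}(P)=2$ for every $n$ while $\ell_{d_\infty}(P)=1$.

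The fix is to avoid $\ell_{d_n}(P)$ altogether. Pick finitely many points $x=x_0,\dots,x_k=y$ along a near-optimal $d_\infty$-path $P$, with consecutive $d_\infty$-gaps small. By the triangle inequality for $e^f\cdot d_n$,
\[
e^f\cdot d_n(x,y)\ \le\ \sum_i e^f\cdot d_n(x_i,x_{i+1}).
\]
Now use that each $d_n$ is a length metric and satisfies $\vphi_1(|\cdot|)\le d_n$: a $d_n$-geodesic from $x_i$ to $x_{i+1}$ has $d_n$-length $d_n(x_i,x_{i+1})$ and therefore stays inside a Euclidean ball of radius $\vphi_1^{-1}(d_n(x_i,x_{i+1}))$ around $x_i$, on which $f$ oscillates by at most $\eta$. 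Hence $e^f\cdot d_n(x_i,x_{i+1})\le e^{f(x_i)+\eta}\,d_n(x_i,x_{i+1})$, and since $d_n(x_i,x_{i+1})\to d_\infty(x_i,x_{i+1})$ pointwise, the sum converges to a Riemann sum for $\int_P e^f\,ds_{d_\infty}$. Letting $\eta\to 0$ and then the partition refine gives the desired $\limsup$ bound. This is presumably what \cite{df-lqg-metric} does; your partition idea is right, but it should be applied to $e^f\cdot d_n$ via the triangle inequality rather than to $\ell_{d_n}(P)$.
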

We will need the following corollary.

\begin{cor}\label{expl71}
Let $U \subset {\R^d}$ be a bounded open set, and let $f : \overline{U} \to \R$ be continuous. Let $\e > 0,$ and suppose that $\mathfrak{d}_0$ is a continuous length metric on $\overline{U}.$ Then there exists a $\delta > 0$ such that if $\mathfrak{d}$ is another continuous length metric on $\overline{U}$ such that $\sup_{x,y \in \overline{U}}\vert \mathfrak{d}_0(x,y)-\mathfrak{d}(x,y)\vert < \delta,$ then $\vert e^f\cdot \mathfrak{d}_0(x,y) - e^f\cdot \mathfrak{d}(x,y)\vert <\e$ for all $x,y\in \overline{U}$.
\end{cor}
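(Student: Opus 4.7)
The plan is to argue by contradiction plus compactness, reducing the statement to Lemma \ref{limitmetric}. Suppose the conclusion fails: then there exist $\e > 0$, a sequence of continuous length metrics $\mathfrak{d}_n$ on $\overline{U}$ with $\delta_n := \sup_{x,y\in \overline U}\vert \mathfrak{d}_n(x,y) - \mathfrak{d}_0(x,y)\vert \to 0$, and points $x_n, y_n \in \overline{U}$ with $\vert e^f \cdot \mathfrak{d}_n(x_n, y_n) - e^f \cdot \mathfrak{d}_0(x_n, y_n)\vert \geq \e$ for every $n$. By compactness of $\overline{U}$, I would pass to a subsequence so that $x_n \to x_*$ and $y_n \to y_*$ in the Euclidean metric.

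To apply Lemma \ref{limitmetric} with $d_n = \mathfrak{d}_n$ and $d_\infty = \mathfrak{d}_0$, one needs increasing functions $\varphi_1, \varphi_2 : (0,\infty) \to (0,\infty)$ with $\varphi_1(0^+) = \varphi_2(0^+) = 0$ and $\varphi_1(\vert x-y\vert) \leq \mathfrak{d}_n(x,y) \leq \varphi_2(\vert x-y\vert)$ for every $n$. I would define $\varphi_2(r) := \sup_n \sup\{\mathfrak{d}_n(x,y) : \vert x-y\vert \leq r\}$ and $\varphi_1$ analogously using infima over $\vert x-y\vert \geq r$. The nontrivial property $\varphi_2(0^+) = 0$ follows from splitting at a threshold $N$: for $n \geq N$ one uses $\mathfrak{d}_n \leq \mathfrak{d}_0 + \delta_n$ together with uniform continuity of $\mathfrak{d}_0$ on the compact set $\overline{U} \times \overline{U}$ (valid because $\mathfrak{d}_0$ induces the Euclidean topology), while for the finitely many $n < N$ one invokes the individual moduli of continuity of each continuous $\mathfrak{d}_n$. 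The lower bound $\varphi_1$ is handled symmetrically, using that $\mathfrak{d}_0$ is bounded below by a positive constant on the compact set $\{(x,y) : \vert x-y\vert \geq r\}$ for each fixed $r > 0$.

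Once Lemma \ref{limitmetric} furnishes the pointwise convergence $e^f \cdot \mathfrak{d}_n(x_*, y_*) \to e^f \cdot \mathfrak{d}_0(x_*, y_*)$, I would transfer this to the moving points $(x_n, y_n)$. Letting $M := \sup_{\overline U} \vert f\vert$, the elementary inequality $e^f \cdot \mathfrak{d} \leq e^M \mathfrak{d}$ for any length metric $\mathfrak{d}$ and the triangle inequality for $e^f \cdot \mathfrak{d}_n$ yield
\[
\bigl\vert e^f \cdot \mathfrak{d}_n(x_n, y_n) - e^f \cdot \mathfrak{d}_n(x_*, y_*) \bigr\vert \leq e^M \bigl(\mathfrak{d}_n(x_n, x_*) + \mathfrak{d}_n(y_n, y_*)\bigr),
\]
which tends to zero since $\mathfrak{d}_n(x_n, x_*) \leq \mathfrak{d}_0(x_n, x_*) + \delta_n \to 0$ and likewise for $y$. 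The same bound applied with $\mathfrak{d}_0$ in place of $\mathfrak{d}_n$ gives continuity of $e^f \cdot \mathfrak{d}_0$ with respect to the Euclidean topology, so $e^f \cdot \mathfrak{d}_0(x_n, y_n) \to e^f \cdot \mathfrak{d}_0(x_*, y_*)$. Combining the three convergences produces $\vert e^f \cdot \mathfrak{d}_n(x_n, y_n) - e^f \cdot \mathfrak{d}_0(x_n, y_n)\vert \to 0$, contradicting the standing assumption. The main obstacle I anticipate is verifying $\varphi_1(0^+) = \varphi_2(0^+) = 0$; a naive uniform modulus-of-continuity bound does not automatically vanish at zero across all $n$, so one must separate the finitely many initial indices from the asymptotic tail.
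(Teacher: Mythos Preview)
Your proposal is correct and follows essentially the same contradiction-plus-Lemma~\ref{limitmetric} approach as the paper, including the construction and verification of the moduli $\varphi_1,\varphi_2$. Your argument is in fact slightly more careful: the paper's proof simply asserts that the hypotheses of Lemma~\ref{limitmetric} ``giv[e] us a contradiction,'' whereas you explicitly bridge the gap between the pointwise convergence furnished by that lemma and the uniform lower bound $\geq\e$ by extracting convergent endpoints $x_n\to x_*$, $y_n\to y_*$ and using the elementary Lipschitz bound $e^f\cdot\mathfrak d\leq e^M\mathfrak d$.
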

\begin{proof}
If the conclusion does not hold, then there is a sequence of continuous length metrics $\mathfrak{d}^n$ such that
\begin{equation} \label{metriclim}
\sup_{x,y \in \overline{U}}\vert \mathfrak{d}_0(x,y)-\mathfrak{d}^n(x,y)\vert < \frac{1}{n},
\end{equation}
but 
\[
\sup_{x,y \in \overline{U}}\vert e^f\cdot \mathfrak{d}_0(x,y) - e^f\cdot \mathfrak{d}^n(x,y)\vert >\e.
\]
Therefore $\mathfrak{d}^n$ converges to $\mathfrak{d}_0$ uniformly. We claim that the hypotheses of Lemma \ref{limitmetric} hold, giving us a contradiction. For this we construct the functions $\vphi_1,\vphi_2.$ For any $r>0,$ let
\[
\bar{\vphi}_1(r):=\inf_{\substack{x,y:\vert x-y\vert=r\\ n \in \mathbb{N}}}\mathfrak{d}^n(x,y),
\]
and similarly
\[
\bar{\vphi}_2(r):=\sup_{\substack{x,y:\vert x-y\vert=r\\ n \in \mathbb{N}}}\mathfrak{d}^n(x,y).
\]
Finally, we define
\[
\vphi_1(r):=\inf_{s\geq r}\bar{\vphi}_1(s)
\]
and
\[
\vphi_2(r):=\sup_{s\leq r} \bar{\vphi}_2(s).
\]
Note that $\vphi_1(r), \vphi_2(r)$ are both decreasing, and $\bar{\vphi}_1(r)\geq \vphi_1(r),$ $\bar{\vphi}_2(r)\leq \vphi_2(r).$ Therefore by construction we have that
\[
\vphi_1(\vert x-y\vert)\leq \mathfrak{d}^n(x,y) \leq \vphi_2(\vert x-y\vert).
\]
It suffices to check now that $\vphi_1(r)>0$ if $r>0,$ $\vphi_2(0^+)=0,$ and $\vphi_2(s) < \infty$ for small enough $s.$ To check that $\vphi_1(r)>0,$ suppose by contradiction that this is not the case. Then there exist sequences $n_k,$ $x_k,y_k$ such that $\vert x_k-y_k\vert = r$ and
\[
\lim_{k\to \infty}\mathfrak{d}^{n_k}(x_k,y_k) = 0.
\]
Note that by passing to subsequence if necessary, we can assume that either $n_k=n$ does not depend on $k$ or $n_k \to \infty$ as $k \to \infty.$ In the first case, we would have that $\lim_{k\to \infty}\mathfrak{d}^n(x_k,y_k) = 0$ for some $n,$ which is impossible since $\mathfrak{d}^n$ is positive definite. In the second case, again by passing to a subsequence we can assume that $x_k \to x$ and $y_k\to y.$ Hence we have by \eqref{metriclim} that
\[
\lim_{k\to \infty}\mathfrak{d}^{n_k}(x_k,y_k) \geq \liminf_{k\to \infty} \mathfrak{d}_0(x_k,y_k) - \frac{1}{n_k} = \liminf_{k\to \infty}\mathfrak{d}_0(x_k,y_k) = \mathfrak{d}_0(x,y)>0,
\]
which yields a contradiction.

A similar contradiction argument shows that $\vphi_2(0^+)=0$ and $\vphi_2(s) < \infty.$ This completes the proof.
\end{proof}
We will need the following lemma about the behavior of metrics of the form $e^{F} \cdot \mathfrak{d}$ when $F$ is small.
\begin{lemma}\label{metricpert}
Let $F:{\R^d}\to{\R^d}$ be a continuous function, and suppose that $\mathfrak{d}$ is a length metric. Suppose that $F \in L^\infty({\R^d}),$ and that 
\[
\mathrm{Diam}_{\mathfrak{d}}([-R,R]^d):= \sup_{x,y\in [-R,R]^d} \mathfrak{d}(x,y) < \infty
\]
Then
\[
\sup_{x,y \in [-R,R]^d} \vert e^F \cdot \mathfrak{d}(x,y)-\mathfrak{d}(x,y)\vert \leq e^{\norm{F}_\infty}\left(e^{\norm{F}_\infty} -1\right) \mathrm{Diam}_{\mathfrak{d}}([-R,R]^d).
\]
\end{lemma}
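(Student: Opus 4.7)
The plan is to read off both inequalities directly from the Weyl scaling formula \eqref{weyl}. Because any path $P$ from $x$ to $y$ parametrized by $\mathfrak d$-length has total length $\ell(P) = \ell_{\mathfrak d}(P) \geq \mathfrak d(x,y)$, and the integrand satisfies the pointwise pinching $e^{-\norm{F}_\infty} \leq e^{F(P(t))} \leq e^{\norm{F}_\infty}$, integrating from $0$ to $\ell(P)$ and taking the infimum over paths yields
\[
e^{-\norm{F}_\infty}\, \mathfrak d(x,y) \;\leq\; (e^F \cdot \mathfrak d)(x,y) \;\leq\; e^{\norm{F}_\infty}\, \mathfrak d(x,y).
\]

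Subtracting $\mathfrak d(x,y)$ gives $\lvert(e^F \cdot \mathfrak d)(x,y) - \mathfrak d(x,y)\rvert \leq (e^{\norm{F}_\infty} - 1)\, \mathfrak d(x,y)$, where I used that $e^{\norm{F}_\infty} - 1 \geq 1 - e^{-\norm{F}_\infty}$. Bounding $\mathfrak d(x,y) \leq \mathrm{Diam}_{\mathfrak d}([-1,1]^2)$ for $x,y \in [-1,1]^2$ and using the trivial inequality $e^{\norm{F}_\infty} - 1 \leq e^{\norm{F}_\infty}(e^{\norm{F}_\infty} - 1)$ (since $e^{\norm{F}_\infty} \geq 1$) absorbs the answer into the slightly looser form stated in the lemma.

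There is no real obstacle here; the only point worth noting is that paths achieving the infimum in $(e^F \cdot \mathfrak d)(x,y)$ need not stay inside $[-1,1]^2$, but this is harmless because the pointwise bound on $e^F$ is global (it uses $\norm{F}_{L^\infty(\mathbb C)}$) and the comparison is applied path-by-path before taking the infimum. The finite-diameter hypothesis enters only at the very last step to convert the $\mathfrak d(x,y)$ factor into the uniform constant $\mathrm{Diam}_{\mathfrak d}([-1,1]^2)$.
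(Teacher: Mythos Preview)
Your proof is correct and follows the same elementary route as the paper, which simply records that the lemma ``is an elementary consequence of the definition of $e^F\cdot \mathfrak{d}$.'' In fact your pinching argument yields the sharper bound $(e^{\norm{F}_\infty}-1)\,\mathrm{Diam}_{\mathfrak d}([-1,1]^2)$, which you then correctly relax to match the stated inequality; your remark that paths may leave $[-1,1]^2$ but the bound on $e^F$ is global is exactly the right observation.
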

\begin{proof}
This is an elementary consequence of the definition of $e^F\cdot \mathfrak{d}.$
\end{proof}

Now we will finish the proof of Theorem \ref{particular}.

\begin{proof}[Proof of Theorem \ref{particular}]
Now assume that we are in the setting of Theorem \ref{particular}.
Let $f:{\R^d}\to \R$ be a continuous function. From now on, we assume that the conclusions of Propositions \ref{approxeucdist}, \ref{approxeucmeas},
\begin{equation}\label{smallthing}
\vert (h_{0,\beta})_1(0)\vert \leq \e
\end{equation}
and \ref{consthinfty} hold simultaneously, which is a positive probability event for small enough $\beta>0.$ This is because the conclusions of Propositions \ref{approxeucdist} and \ref{approxeucmeas} hold with probability going to $1$ as $\beta \to0,$ \eqref{smallthing} holds with probability going to $1$ by Lemma \ref{avggoingto0}, the conclusion Proposition \ref{consthinfty} holds with positive probability, and also by the independence of $h_{0,\beta},h_{\beta,\infty}.$ We claim that
\begin{equation} \label{eqn:use-exp171}
\vert D_{h}(x,y)-\nu \beta^{\xi Q-1}e^{\xi f} \cdot d_0(x,y)\vert \leq \beta^{\xi Q-1}\e ,\quad \forall x ,y\in [-R,R]^d .
\end{equation}
This statement might seem odd at first glance, since $D_{h}(x,y)$ is of constant order while $\nu \beta^{\xi Q-1}e^{\xi f} \cdot d_0(x,y)$ is not. However, it holds because we are conditioning on the event that $\norm{h_{\beta,\infty}-f}_\infty<\e,$ which is rare, but holds with positive probability. To prove \eqref{eqn:use-exp171} we apply Corollary \ref{expl71} to the metrics $\mathfrak{d}_0=\nu d_0$ and $\mathfrak{d}=\beta^{1-\xi Q}D_{h_{0,\beta}}$ together with the function $f,$ which we know satisfy the hypotheses by Proposition \ref{approxeucdist} for small enough $\beta.$ Thus for small enough $\beta > 0,$ we have that for all $x,y\in [-R,R]^d,$
\[
\vert \beta^{1-\xi Q} D_{h_{0,\beta}+f}(x,y)-\nu e^{\xi f} \cdot d_0(x,y)\vert \leq \e,
\]
or equivalently,
\begin{equation}\label{randomineq}
\vert D_{h_{0,\beta}+f}(x,y)-\nu \beta^{\xi Q-1}e^{\xi f} \cdot d_0(x,y)\vert \leq \beta^{\xi Q-1}\e.
\end{equation}
We recall the white noise decomposition
\[
h = h_{0,\beta}+h_{\beta,\infty}-(h_{0,\beta})_1(0).
\]
This implies that
\[
\norm{h - (h_{0,\beta}+f)}_{\infty} = \norm{(h_{\beta,\infty}-f)-(h_{0,\beta})_1(0)}_{\infty} \leq \norm{h_{\beta,\infty}-f}_{\infty} + \vert (h_{0,\beta})_1(0)\vert.
\]
Using Proposition \ref{consthinfty} for the first term and \eqref{smallthing} for the second one, we see that for small enough $\beta$ we have
\[
\norm{h - (h_{0,\beta}+f)}_{\infty} \leq 2\e.
\]
Therefore by Lemma \ref{metricpert} we have that for all $x,y \in [-R,R]^d,$
\begin{equation}\label{ineq17}
\vert D_{h}(x,y)-D_{h_{0,\beta}+f}(x,y)\vert \leq e^{2\xi \e}\left(e^{2\xi \e}-1\right) \Diam_{D_{h_{0,\beta}+f}}([-1,1]^2).
\end{equation}
Applying Lemma \ref{metricpert} again for the metric $D_{h_{0,\beta}}$ and the perturbation $f,$ we see that
\[
\mathrm{Diam}_{D_{h_{0,\beta}+f}}([-R,R]^d) \leq e^{\xi \norm{f}_\infty} \mathrm{Diam}_{D_{h_{0,\beta}}}([-R,R]^d).
\]
Now using Proposition \ref{approxeucdist} we obtain that
\[
\mathrm{Diam}_{D_{h_{0,\beta}+f}}([-R,R]^d) \leq \beta^{\xi Q-1}\nu 2\sqrt{2} + \e \beta^{\xi Q-1}.
\]
Plugging this into \eqref{ineq17} we obtain that if $\e$ is small enough, for all $x,y\in [-R,R]^d$ we have
\[
\left\vert D_{h}(x,y)-D_{h_{0,\beta}+f}(x,y)\right\vert \leq \left(e^{2\xi \e} -1\right) \beta^{\xi Q-1} 4\nu.
\]
Combining this with \eqref{randomineq} and using the triangle inequality, we obtain that
\begin{eqnarray*}
\left\vert D_{h}(x,y)-\nu \beta^{\xi Q-1}e^{\xi f} \cdot d_0(x,y)\right\vert &\leq & \left\vert D_{h}(x,y)-D_{h_{0,\beta}+f}(x,y)\right\vert + \vert D_{h_{0,\beta}+f}(x,y)-\nu \beta^{\xi Q-1}e^{\xi f} \cdot d_0(x,y)\vert\\
&\leq & \beta^{\xi Q-1}\e+ \left(e^{2\xi \e} -1\right) \beta^{\xi Q-1} 4\nu.
\end{eqnarray*}
Therefore for small enough $\beta>0$ and shrinking $\e$ if necessary, we obtain our claim \eqref{eqn:use-exp171}.

Again recall we are assuming the conclusions of Propositions \ref{approxeucdist}, \ref{approxeucmeas} and \ref{consthinfty} hold simultaneously. Using the white noise decomposition $h=h_{p,\beta}+h_{\beta,\infty} - (h_{0,\beta})_1(0),$ we see that by Proposition \ref{consthinfty} and \eqref{smallthing} we have
\[
\norm{h - h_{0,\beta}-f}_\infty \leq 2\e.
\]
Hence
\begin{equation}\label{measineq}
\mu_{h}([-R,R]^d)\leq e^{\gamma \norm{f}_\infty+2\gamma\e} \mu_{h_{0,\beta}}([-R,R]^d)\leq C_0e^{\gamma \norm{f}_\infty+2\gamma\e} \beta^{\gamma Q-2-c}.
\end{equation}
This implies that if the conclusions of Propositions \ref{approxeucdist}, \ref{approxeucmeas}, and \ref{consthinfty} hold, then we have that for any $x,y \in [-R,R]^d,$
\[
\vert D_{h}(x,y)- \nu\beta^{\xi Q-1} e^{\xi f}\cdot d_0(x,y)  \vert \leq \beta^{\xi Q-1} \e,
\]
and
\[
\mu_{h}([-R,R]^d) \leq C_0 e^{\gamma \norm{f}_\infty+2\gamma\e} \beta^{\gamma Q-2-c}.
\]
Now replacing $f$ by $f+(\frac{1}{\xi}-Q)\log(\beta)-\frac{\log \nu}{\xi},$ we obtain
\[
\sup_{x,y\in [-R,R]^d}\vert D_{h}(x,y)- e^{\xi f}\cdot d_0(x,y)\vert\leq \e
\]
and for any Borel set $A,$
\[
\mu_{h}(A)\leq \theta \int_{A_\e} f(z) dz + \e
\]
where
\[
\theta = \alpha \nu^{-\frac{\gamma}{\xi}}\beta^{\frac{\gamma}{\xi} -d}.
\]
Note that by \eqref{gammaxiass}, $\theta\to 0$ as $\beta \to 0,$ and so this completes the proof of Theorem \ref{particular}.
\end{proof}

\subsection{First passage percolation argument}\label{fppargument}
From now on, let $\mbox{Diam}_{h} $ denote the diameter with respect to the metric $D_{h}$ and define $\mbox{Diam}_{h_{0,\beta}}$ analogously.
For the proof of this Proposition \ref{approxeucdist}, we first treat the case where $y$ is fixed. We will look at the asymptotic behaviour of $D_{h_{0,1}}(0,x)$ in a single direction first, and then prove convergence independent of the direction. To this end, we will need the following lemma.
\begin{lemma} \label{borelcantelli}
Let $c>0$ be a sufficiently small fixed constant. Then we have that almost surely,
\[
\sup_{\substack{x,y \in [-2^n,2^n]^d\\ \vert x-y\vert \leq 1}} D_{h_{0,1}}(x,y) \leq 2^{(1-c)n}
\]
for all but finitely many integers $n.$
\end{lemma}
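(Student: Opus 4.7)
The plan is to apply a Borel--Cantelli argument, using the translation invariance of $h_{0,1}$ from Lemma \ref{staterg} to reduce a supremum over a growing square to a single fixed-scale estimate. For each positive integer $n$, set
\[
E_n := \left\{ \sup_{\substack{x,y \in [-2^n,2^n]^2\\ |x-y|\le 1}} D_{h_{0,1}}(x,y) > 2^{(1-c)n} \right\},
\]
so the lemma is equivalent to $\mathbb P[E_n \text{ i.o.}] = 0$. By Borel--Cantelli, it suffices to show that $\sum_n \mathbb P[E_n] < \infty$.

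First I would cover $[-2^n, 2^n]^2$ by $O(2^{2n})$ closed Euclidean balls of radius $2$ centered at lattice points $z \in \mathbb Z^2 \cap [-2^n, 2^n]^2$, so that every pair $x,y$ in the square with $|x - y| \le 1$ lies in at least one such ball. Writing
\[
Y(z) := \sup_{x,y \in B_2(z)} D_{h_{0,1}}(x,y),
\]
the union bound gives $\mathbb P[E_n] \le C 2^{2n} \sup_{z} \mathbb P[Y(z) > 2^{(1-c)n}]$. By the translation invariance property in Lemma \ref{staterg}, combined with the fact that the internal metric on $B_2(z)$ is measurable with respect to $h_{0,1}|_{\text{neighborhood of }B_2(z)}$, the random variable $Y(z)$ has a distribution independent of $z$. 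Hence the estimate reduces to a single tail bound $\mathbb P[Y(0) > 2^{(1-c)n}]$.

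The core input is an upper-tail estimate of the form $\mathbb P[Y(0) > t] \le C_A t^{-A}$ for every $A > 0$ (and any $t \geq 1$). Since $h_{0,1}$ can be coupled with the whole-plane GFF so that the difference is a continuous function (as noted after \eqref{whitenoisedecomp} and in the remark following its existence), Weyl scaling together with the known super-polynomial upper-tail estimates on the diameter of the LQG metric over a fixed compact set (compare \cite{lqg-metric-estimates,local-metrics,dg-lqg-dim}) transfers to $Y(0)$, yielding polynomial tail decay of arbitrarily large order. Choosing $A$ large enough that $A(1-c) > 3$ then gives
\[
\mathbb P[E_n] \le C 2^{2n} \cdot 2^{-A(1-c)n} \le C 2^{-n},
\]
which is summable and completes the Borel--Cantelli step.

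The main obstacle is the third step: locating (and, if necessary, lightly adapting to the $h_{0,1}$ setting) a polynomial upper-tail bound for the local diameter of $D_{h_{0,1}}$ with exponent that can be made arbitrarily large, so as to dominate the $2^{2n}$ factor from the union bound. Everything else---the covering, the use of translation invariance to eliminate the supremum over centers, and the Borel--Cantelli conclusion---is routine once this tail estimate is in hand. The assumption that $c$ be a sufficiently small fixed constant is never actually used beyond the fact that $c<1$; any such $c$ would suffice for the Borel--Cantelli bound.
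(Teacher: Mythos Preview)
Your overall architecture---cover $[-2^n,2^n]^2$ by $O(2^{2n})$ fixed-size sets, use translation invariance of $h_{0,1}$ to reduce to a single tail estimate, then apply Borel--Cantelli---is exactly the paper's argument. The difference is in the tail input, and there you overclaim.

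The diameter $Y(0)$ does \emph{not} have super-polynomial tails: the relevant result (Theorem~1.8 of \cite{lqg-metric-estimates}) gives $\mathbb E\!\left[\bigl(\mathrm{Diam}_{D_{h_{0,1}}}([0,1]^2)\bigr)^p\right]<\infty$ only for $p<4d_\gamma/\gamma^2$, and this threshold is sharp (moments of higher order blow up, for the same reason that Gaussian multiplicative chaos masses have a finite critical moment). So the best polynomial tail you get from Markov is $\mathbb P[Y(0)>t]\le C_p\,t^{-p}$ with $p$ strictly below $4d_\gamma/\gamma^2$. Since $\gamma<2$ and $d_\gamma>2$ force $4d_\gamma/\gamma^2>2$, you can still choose some $p>2$, and then summability requires $2-p(1-c)<0$, i.e.\ $c<1-2/p$. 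This is precisely why the lemma is stated for ``sufficiently small $c$''; your final paragraph asserting that any $c<1$ works is wrong and should be deleted. Once you replace the super-polynomial claim by the correct finite-moment input and keep the smallness hypothesis on $c$, your proof matches the paper's.
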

\begin{proof}
We recall that $e(p):=\mathbb{E}\left(\left(\mathrm{Diam}_{D_{h_{0,1}}}([0,1]^d)\right)^p\right)<\infty$ for some $p>d$ by \eqref{extramombound}. By Markov's inequality,
\[
\mathbb{P}\left( \mathrm{Diam}_{D_{h_{0,1}}}([0,1]^d) > 2^{(1-c)n} \right) \leq \frac{C}{2^{np(1-c)}}.
\]
for some absolute constant $C$ only depending on $p.$ Now note that if $\sup_{\substack{x,y \in [-2^n,2^n]^d\\ \vert x-y\vert \leq 1}}D_{h_{0,1}}(x,y)> 2^{(1-c)n+d},$ then there must exist $x_0,y_0$ such that $\vert x-y\vert \leq 1,$ but $D_{h_{0,1}}(x,y)> 2^{(1-c)n+2}.$ This implies that there must exist a cube $[k_1,k_1+1]\times [k_2,k_2+1] \cdot \times [k_d,k_d+1]$ with $-2^n\leq k_1,k_2, \ldots , k_d \leq 2^n-1$ such that
\[
\mathrm{Diam}_{D_{h_{0,1}}}([k_1,k_1+1]\times [k_2,k_2+1] \cdot \times [k_d,k_d+1]) > 2^{(1-c)n}.
\]
Since there are $2^{dn}$ possibilities for the pair $(k_1,k_2),$ we obtain
\[
\mathbb{P}\left( \sup_{\substack{x,y\in [-2^n,2^n]^d\\ \vert x-y\vert \leq 1}}{D_{h_{0,1}}}(x,y) > 2^{(1-c)n} \right)\leq \left(2^n\right)^{d-p(1-c)}.
\]
Choosing $p$ to be close enough to $2$ such that $d-p(1-c)<0,$ we can apply Borel-Cantelli to obtain that almost surely, for all but finitely many $n$ we have
\[
\sup_{\substack{x,y \in [-2^n,2^n]^d\\ \vert x-y\vert \leq 1}} D_{h_{0,1}}(x,y) \leq 2^{(1-c)n}.
\]
This completes the proof.
\end{proof}

\begin{lemma} \label{nuexist}
Let $x\in {\R^d} \setminus \{0\}$ be fixed. There is a deterministic constant $\nu > 0$, not depending on $x$, such that a.s.
\[
\lim_{\lambda \to\infty}\left\vert \frac{D_{h_{0,1}}(0,\lambda x)}{\lambda\vert x\vert} - \nu \right\vert = 0.
\]
\end{lemma}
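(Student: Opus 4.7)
The plan is to apply Kingman's subadditive ergodic theorem combined with a shape-theorem-style argument. Fix $x \in \mathbb{C}\setminus\{0\}$ and set $X_{m,n} := D_{h_{0,1}}(mx,nx)$ for integers $0\leq m \leq n$. The triangle inequality gives subadditivity $X_{m,n} \leq X_{m,k} + X_{k,n}$, and translation invariance (Lemma~\ref{staterg}) gives joint stationarity under $(m,n)\mapsto(m+1,n+1)$. Covering $[0,x]$ by $\lceil|x|\rceil+1$ unit translates of $[0,1]^2$, the moment bound from \cite[Theorem 1.8]{lqg-metric-estimates} used in Lemma~\ref{borelcantelli} gives $\mathbb{E}[X_{0,1}]<\infty$. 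Kingman's theorem therefore produces a random variable $\nu_0(x)$ with $X_{0,n}/n\to\nu_0(x)$ a.s.\ and in $L^1$.

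To promote $\nu_0(x)$ to a deterministic constant, I would run the same argument at every base point $y\in\mathbb{C}$, obtaining $D_{h_{0,1}}(y,y+nx)/n\to\nu_y(x)$ a.s. The bound in Lemma~\ref{borelcantelli} shows that $D_{h_{0,1}}(0,y)$ and $D_{h_{0,1}}(nx,nx+y)$ are both $O((n|x|+|y|)^{1-c})=o(n)$, so the triangle inequality forces $\nu_y(x)=\nu_0(x)$ a.s. Taking a countable dense set of base points $y$, we see that $\nu_0(x)$, regarded as a function of $h_{0,1}$, is a.s.\ invariant under a countable dense subgroup of $\mathbb{C}$-translations, hence (by distributional continuity) under every translation of $\mathbb{C}$. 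By the ergodicity statement of Lemma~\ref{staterg}, $\nu_0(x)$ is a.s.\ equal to a deterministic constant, which I call $\nu(x)$.

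Next I would show that $\nu(x)=|x|\,\nu$ with $\nu := \nu(e_1)$ independent of direction. The rotational invariance in Lemma~\ref{staterg} gives $\nu(Rx)=\nu(x)$ for any rotation $R$, so $\nu(x)$ depends only on $|x|$. For a positive rational $q=p/r$, passing to the subsequence $n=r\ell$ along which $nqx = p\ell x$ yields $\nu(qx)=q\nu(x)$; a density argument, using Lemma~\ref{borelcantelli} to control $D_{h_{0,1}}(qx,\lambda x)$ for rational $q\to\lambda$, then extends this to real $\lambda>0$. Combining gives $\nu(x)=|x|\,\nu$. Finally, to upgrade from the integer-$n$ limit to the real-$\lambda$ limit in the statement: for $\lambda>0$ set $n=\lfloor\lambda\rfloor$; then $|D_{h_{0,1}}(0,\lambda x)-D_{h_{0,1}}(0,nx)|\leq D_{h_{0,1}}(nx,\lambda x)$, and by Lemma~\ref{borelcantelli} applied with scale $N\sim\log_2(\lambda|x|)$ this is $o(\lambda)$ a.s., so dividing by $\lambda|x|$ and sending $\lambda\to\infty$ gives the claim.

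The main obstacle I expect is showing $\nu>0$; everything above only yields $\nu\in[0,\infty)$. By Fekete's lemma applied to the subadditive sequence $\mathbb{E}[X_{0,n}]$, it is equivalent to establish a uniform lower bound $\mathbb{E}[D_{h_{0,1}}(0,nx)]\geq cn$. I would argue this via the standard annulus-crossing strategy used in first passage percolation (cf.\ \cite{indepperc}): any path from $0$ to $nx$ must traverse each of the disjoint round annuli $A_k := B_{k|x|+|x|/4}(0)\setminus B_{k|x|-|x|/4}(0)$ for $k=1,\dots,n$, and the internal $D_{h_{0,1}}$-distance across $A_k$ is a positive random variable whose expectation is uniformly bounded below, via rotational stationarity together with a polynomial lower tail bound on internal LQG distances across a fixed annulus from \cite{lqg-metric-estimates}. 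Summing these lower bounds over $k$ gives $\mathbb{E}[X_{0,n}]\gtrsim n$, whence $\nu>0$.
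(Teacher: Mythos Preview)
Your approach is essentially the same as the paper's: Kingman's subadditive ergodic theorem applied to $X_{m,n}=D_{h_{0,1}}(mx,nx)$, rotational invariance from Lemma~\ref{staterg} to remove the direction dependence, and Lemma~\ref{borelcantelli} to pass from integer $n$ to real $\lambda$. Two differences are worth noting. First, your route to a deterministic limit is more circuitous than necessary: the paper simply invokes the ergodic form of Kingman's theorem directly (ergodicity of $h_{0,1}$ under translations, Lemma~\ref{staterg}, already forces the Kingman limit to be constant), whereas you re-derive translation invariance of $\nu_0(x)$ by hand before appealing to ergodicity. Second, and more substantively, you explicitly argue $\nu>0$ via annulus crossings and lower tail bounds from \cite{lqg-metric-estimates}, while the paper's proof is silent on this point and only produces $\nu\in\mathbb{R}$; your addition here genuinely fills a gap that the paper leaves open.
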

\begin{proof}
Recall that $h_{0,1}$ is stationary and ergodic by Lemma \ref{staterg} and therefore so is $D_{h_{0,1}}(0,x),$ which together with subadditivity (which is the triangle inequality for $D_{h_{0,1}}$) and Kingman's ergodic theorem (see \cite{kingman}) shows that there exists a deterministic function $\nu:\mathbb{S}^1 \to \R$ such that
\[
\lim_{n \to \infty} \left\vert \frac{D_{h_{0,1}}(0,n x)}{\lambda\vert x\vert}-\nu\left(\frac{x}{|x|} \right) \right\vert =0.
\]
Combining this with Lemma \ref{borelcantelli} we see that
\[
\lim_{\lambda \to \infty} \left\vert \frac{D_{h_{0,1}}(0,\lambda x)}{\lambda\vert x\vert}-\nu\left(\frac{x}{|x|} \right) \right\vert =0,
\]
where here $\lambda$ is a real number. Now all that remains is to see why $\nu$ is in fact constant. Simply note that the law of $D_{h_{0,1}}(0,x)$ is rotationally invariant with respect to $x$ by Lemma \ref{staterg} and hence $\nu$ is independent of the direction.
\end{proof}

\begin{lemma} \label{fixedcenter}
Almost surely, $\limsup_{\vert x\vert \to \infty, x\in {\R^d}} \left\vert \frac{D_{h_{0,1}}(0,x)}{\vert x\vert}-\nu\right\vert =0.$
\end{lemma}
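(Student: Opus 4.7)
The goal is to upgrade Lemma~\ref{nuexist} --- which gives, for each fixed $x\in \BB C\setminus\{0\}$, the a.s.\ convergence $D_{h_{0,1}}(0,\lambda x)/(\lambda|x|)\to \nu$ --- into uniform convergence over the direction of $x$ as $|x|\to \infty$. This is a ``shape theorem'' for the random metric $D_{h_{0,1}}$, in the spirit of the corresponding result in first-passage percolation.

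The plan is to reduce to finitely many directions via a $\delta$-net and then control the oscillation between a general direction and the nearest net direction. Fix $\e>0$ and a small $\delta>0$, and choose a finite $\delta$-net $\{e_1,\ldots,e_N\}\subset \BB S^1$, so that every $u\in \BB S^1$ lies within Euclidean distance $\delta$ of some $e_i$. Since a finite intersection of a.s.\ events is a.s., applying Lemma~\ref{nuexist} simultaneously to each of the $N$ directions gives an a.s.\ random $R_0<\infty$ such that
\[
\max_{1\leq i\leq N}\sup_{\lambda\geq R_0}\left|\frac{D_{h_{0,1}}(0,\lambda e_i)}{\lambda} - \nu\right|\leq \e.
\]
For any $x$ with $|x|\geq R_0$, writing $u=x/|x|$ and picking $i$ with $|u-e_i|<\delta$, the triangle inequality yields
\[
\left|\frac{D_{h_{0,1}}(0,x)}{|x|}-\nu\right|\leq \e + \frac{D_{h_{0,1}}(|x|e_i,x)}{|x|}.
\]

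The main obstacle is to bound the oscillation term $D_{h_{0,1}}(|x|e_i,x)/|x|$ by a quantity that tends to $0$ with $\delta$, uniformly for $|x|$ large. The two points $|x|e_i$ and $x$ are at Euclidean distance $\leq |x|\delta$ but lie at distance $\sim |x|$ from the origin, so the naive chain of unit-Euclidean hops combined with Lemma~\ref{borelcantelli} only gives a bound of order $\delta\cdot |x|^{1-c}$, which diverges as $|x|\to \infty$. I would instead decompose the Euclidean segment from $|x|e_i$ to $x$ into $K\sim |x|\delta/L$ hops of Euclidean length $L=L(|x|)$ chosen large but growing slower than $|x|$, and on each hop use translation invariance (Lemma~\ref{staterg}) together with Lemma~\ref{nuexist} to say that $D_{h_{0,1}}(a,a+Lv)/L$ is concentrated near $\nu|v|$. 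A Borel--Cantelli argument, taking a union bound over the $O(|x|\delta/L)$ hops and discretizing in integer radii $|x|\in[n,n+1)$ together with a refined direction-net at scale $n$, should yield an a.s.\ bound $D_{h_{0,1}}(|x|e_i,x)\leq (\nu+\e)|x|\delta(1+o(1))$, whence the error is at most $(\nu+\e)\delta(1+o(1))$. Finally, letting $|x|\to \infty$, then $\delta\to 0$ and $\e\to 0$, finishes the proof.

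The principal difficulty is quantifying the concentration in Lemma~\ref{nuexist} well enough that the Borel--Cantelli sum converges; obtaining a usable quantitative rate (or replacing it by an alternative uniform-in-base-point ergodic statement) is where the real work lies.
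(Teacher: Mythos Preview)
Your plan correctly frames this as a shape theorem and identifies the right reduction: take a finite net of directions, apply Lemma~\ref{nuexist} on the net, and control the oscillation $D_{h_{0,1}}(|x|e_i,x)/|x|$. You also correctly identify the gap: bounding this oscillation by $O(\delta)$ requires, in your scheme, a quantitative rate in Lemma~\ref{nuexist} good enough for Borel--Cantelli over all base points along the segment from $|x|e_i$ to $x$. Kingman's theorem gives no such rate, and obtaining one in this setting would amount to proving a concentration inequality for $D_{h_{0,1}}$ that is not available in the paper. So as written, the argument does not close.

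The paper (following Boivin~\cite{indepperc}) avoids this entirely by a different mechanism. Rather than quantitative convergence, it proves a \emph{maximal lemma} (Lemma~\ref{max}): $\BB P\{\sup_{|x|>1} D_{h_{0,1}}(0,x)/|x|>\lambda\}\leq K_1/\lambda^2$, and then applies the Birkhoff ergodic theorem to the \emph{indicator} $1_{E_\lambda(y)}$ of the event that this maximal inequality holds centered at $y$. This guarantees that in any thin cone there exist ``good'' lattice points $z'$ arbitrarily far out where $D_{h_{0,1}}(z',x)\leq \lambda|x-z'|$ for all $x$ (Lemma~\ref{claim}). Given a bad sequence $x_i$ with $x_i/|x_i|\to y$, one picks a rational direction $z$ near $y$, sets $n_i M\approx |x_i|$, finds a good $z'$ near $n_iMz$, and bounds $|D(0,x_i)-D(0,n_iMz)|\leq \lambda(|x_i-z'|+|z'-n_iMz|)$. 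The geometry of the cone makes the right-hand side $o(|x_i|)$, while $D(0,n_iMz)/(n_iM)\to \nu$ by Lemma~\ref{nuexist} along the fixed direction $z$. The point is that the ergodic theorem is used only qualitatively, on a bounded random variable, to produce good base points --- no rate is needed. This is exactly the ``alternative uniform-in-base-point ergodic statement'' you allude to at the end; the paper supplies it via the maximal lemma.
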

For the proof of this lemma, we follow the argument used in the main theorem in \cite{indepperc}. The argument is almost exactly the same, but we reproduce it here for the reader's convenience.
First we will outline the proof. The first crucial element is the maximal lemma, Lemma \ref{max} below, which gives an upper tail bound for the random variable $\sup_{|x| > 1} D_{h_{0,1}}(0,x) / |x|$. The next step is Lemma \ref{claim}, which tells us that given any direction $\theta$ there is a $z'$ in that direction at a far enough distance from the origin such that $D_{h_{0,1}}(x,z')<\lambda \vert x\vert$ for a large constant $\lambda > 0$ which does not depend on $x.$

Now if the lemma is not true, then we must have a sequence $x_i \to \infty$ such that
\[
\left\vert \frac{D_{h_{0,1}}(0,x_i)}{\vert x_i \vert} - \nu \right\vert > \bar{\e}, \quad \forall i \in \mathbb N  .
\]
By extracting a subsequence, we can assume that $\frac{x_i}{\vert x_i\vert} \to y.$ We can then compare the distances $D_{h_{0,1}}(0,x_i)$ and $D_{h_{0,1}}(0,n_iMz)$ where $z \in \R^d$ with $|z|=1$ is an appropriate direction, $M$ is a large integer,
and $n_i \in \mathbb N$ is such that $\vert x_i\vert,\vert n_i Mz\vert$ are comparable.
Combining everything we can conclude that $\left| \frac{D_{h_{0,1}}(0,x_i)}{\vert x_i\vert}-\nu  \right| < \bar{\e},$ yielding a contradiction.

First we claim that that the probability of $D_{h}(0,x)/\vert x\vert$ being large is small. More precisely, we need to show the following ``maximal lemma”:
\begin{lemma}\label{max}
For some fixed constant $K_1$ we have
\[
\BB P\left\{ \sup_{x\in {\R^d}, \vert x\vert> 1} \frac{D_{h_{0,1}}(0,x)}{\vert x\vert}>\lambda\right\} \leq \frac{K_1}{\lambda^d}
\]
\end{lemma}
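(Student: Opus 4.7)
The strategy is a dyadic union bound. Write $\{|x|>1\} = \bigcup_{n\ge 0}\{2^n < |x| \le 2^{n+1}\}$, and use the trivial estimate $D_{h_{0,1}}(0,x)/|x| \le \mathrm{Diam}_{D_{h_{0,1}}}(B_{2^{n+1}}(0))/2^n$ for $x$ in the $n$-th shell. This reduces the lemma to the estimate
\[
\sum_{n\ge 0} \mathbb{P}\bigl[\mathrm{Diam}_{D_{h_{0,1}}}(B_{2^{n+1}}(0)) > \lambda \cdot 2^n\bigr] \le \frac{K_1}{\lambda^2}.
\]

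The main technical input is a uniform $L^p$ bound of the form $\mathbb{E}[\mathrm{Diam}_{D_{h_{0,1}}}(B_R(0))^p] \le C_p R^p$ for some $p > 2$ and all $R \ge 1$. To obtain it, I would cover $B_R$ by a grid of unit squares. By the translation invariance in Lemma \ref{staterg}, each such square has diameter distributed as $\mathrm{Diam}_{D_{h_{0,1}}}([0,1]^2)$, which has finite $p$-th moment for all $p < 4d_\gamma/\gamma^2$ (an interval containing some $p>2$) by the result of \cite{lqg-metric-estimates} cited in the proof of Lemma \ref{borelcantelli}. For any two points $y,z \in B_R$, the distance $D_{h_{0,1}}(y,z)$ is bounded by a sum of $O(R)$ such unit-square diameters along a connecting chain, so Minkowski's inequality gives $\|D_{h_{0,1}}(y,z)\|_p \le CR$. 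To upgrade from fixed endpoints to the supremum over $y,z \in B_R$, I would run a chaining argument exploiting that the correlations of $h_{0,1}$ decay exponentially beyond scale $1$, so the unit-square diameters are near-independent when well-separated; this is the feature that makes the argument parallel to the first-passage percolation argument of \cite{indepperc}.

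Once the moment bound is established, Markov's inequality yields a per-scale tail of order $C/\lambda^p$. The sum over $n$ then requires geometric decay in $n$, which I would extract from a sub-Gaussian concentration of the diameter around its mean: since $\mathrm{Diam}_{D_{h_{0,1}}}(B_R(0))$ is a suitably Lipschitz functional of $h_{0,1}$, Gaussian concentration (Borell--TIS) gives that large deviations above the typical value $\sim R$ are exponentially unlikely. Combining the polynomial tail from the moment bound with the exponential-in-$\lambda$ tail from concentration yields a summable tail across scales of the desired form $K_1/\lambda^2$.

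The main obstacle is the moment bound on the diameter with the correct linear-in-$R$ scaling, particularly the step of converting fixed-endpoint $L^p$ control into supremum $L^p$ control on $B_R$. Achieving this cleanly is the reason for importing the near-independence and chaining techniques of \cite{indepperc} into the LQG setting, where the exponentially decaying long-range correlations of the truncated white-noise field $h_{0,1}$ play the role that exact independence plays in standard first-passage percolation.
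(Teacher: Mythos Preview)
Your dyadic approach has a genuine gap. The moment bound $\mathbb{E}[\mathrm{Diam}_{D_{h_{0,1}}}(B_R(0))^p] \le C_p R^p$ only gives, via Markov,
\[
\mathbb{P}\bigl[\mathrm{Diam}_{D_{h_{0,1}}}(B_{2^{n+1}}(0)) > \lambda \cdot 2^n\bigr] \le \frac{C_p\, 2^p}{\lambda^p},
\]
which is \emph{constant in $n$}; the sum over $n$ diverges. You recognise this and appeal to Borell--TIS, but that step fails: the diameter $\mathrm{Diam}_{D_{h_{0,1}}}(B_R(0))$ is not a Lipschitz functional of the Gaussian field $h_{0,1}$ in Cameron--Martin norm. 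Weyl scaling says that perturbing $h_{0,1}$ by $f$ multiplies all $D_{h_{0,1}}$-lengths pointwise by $e^{\xi f}$, so small additive changes in the field produce \emph{multiplicative}, not additive, changes in the metric. There is no finite Lipschitz constant, and the sub-Gaussian concentration you invoke is not available. Even the known concentration results for $\log D_h$ would not give you the $n$-dependent decay needed to make the dyadic sum converge.

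The paper's proof avoids the dyadic decomposition entirely. It dominates $D_{h_{0,1}}(0,x)$ by a lattice first-passage distance $L(0,x)$ whose edge weights are diameters of adjacent unit squares, observes that these weights are stationary with finite $p$-th moment for some $p>2$, and then applies the maximal lemma from \cite{indepperc} as a black box. That lemma exploits the \emph{subadditive} structure $L(0,x) \le L(0,y) + L(y,x)$ directly, rather than bounding the supremum scale by scale; this is exactly the structural feature your dyadic reduction throws away, and it is what makes the $K_1/\lambda^2$ tail go through with only a $2+\varepsilon$ moment assumption on the weights.
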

\begin{proof}
Let $\mathcal{S}$ be the set of closed squares of side length $1$ with corners in $\mathbb{Z}^d.$ 
For each pair of squares $S,S' \in \mathcal S$ which share a side, we define the weight
\[
w(S,S') = \mathrm{Diam}_{h_{0,1}}(S) + \mathrm{Diam}_{h_{0,1}}(S'). 
\]
For each $x\in \R^d$, choose $S_x \in \mathcal S$ such that $x\in S$ (if $x$ lies on the boundary of a square, we make an arbitrary choice). 
For $x,y\in {\R^d}$, let 
\[
L(x,y) :=\inf_{P} \sum_{i=1}^{|P|} w(P_{i-1} , P_i) = \mathrm{Diam}_{h_{0,1}}(S_x) + \mathrm{Diam}_{h_{0,1}}(S_y) + 2\inf_P \sum_{i=1}^{|P|-1} \mathrm{Diam}_{h_{0,1}}(P_i) 
\]
where the infimum runs over all paths $P : P_0 , P_1,\dots,P_{|P|}$ of cubes with $P_0 = S_x$ and $P_{|P|} = S_y$.
By the triangle inequality, 
\[
\frac{D_{h_{0,1}}(0,x)}{\vert x\vert} \leq \frac{L(0,x)}{\vert x\vert}.
\]
Hence
\[
\BB P\left\{ \sup_{x\in {\R^d}, \vert x\vert> 1} \frac{D_{h_{0,1}}(0,x)}{\vert x\vert}>\lambda\right\} \leq \BB P\left\{ \sup_{x\in {\R^d}, \vert x\vert> 1} \frac{L(0,x)}{\vert x\vert}>\lambda\right\}.
\]
Note that $h_{0,1}$ is stationary and ergodic by Lemma \ref{staterg}, and therefore so is $\mathrm{Diam}_{h_{0,1}} S.$ 
%Now note that the distances $D(x,y)$ can be modeled by a graph with its geodesic distance, but with the weights on the vertices instead of the edges. To apply the maximal lemma we need to have a graph with weights on the edges. To transform into a problem with edgeweights, simply note that $D_{h_{0,1}}(x,y) \leq 2 \sum \mathrm{Diam}_{h_{0,1}}(S_i)$ where $\{S_i\}$ is a path of squares between $x,y$ on the grid. 
Since $\mathrm{Diam}_{h_{0,1}} S$ has a finite $p$-th moment by Lemma \ref{extramomboundlemma}, we can apply the maximal lemma in \cite{indepperc} (also see \cite[Theorem 6]{otherboivin}) to see that
\[
\BB P\left\{ \sup_{x\in {\R^d}, \vert x\vert> 1} \frac{L(0,x)}{\vert x\vert}>\lambda\right\} \leq \frac{K_1}{\lambda^d}.
\]
\end{proof}

Let $E_\lambda$ be the event that
\[
\sup_{x\in {\R^d}, \vert x\vert> 1}\frac{D_{h_{0,1}}(y,x)}{\vert x\vert}\leq\lambda,
\]
and write $E_\lambda = E_\lambda(0).$ By Lemma \ref{max} we know that
\[
\BB P (E_\lambda) \geq 1-\frac{K_1}{\lambda^d}.
\]
Following the notation in \cite{indepperc}, for $M \in \mathbb N$, let
\[
V_M:=\left\{\frac{z}{M}:z\in \Z^d \right\} ,\quad  V:=\bigcup_{M\in\mathbb N} V_M , \quad  B:=\left\{ z\in V:\vert v\vert=1 \right\} 
\]
and for $k , \rho > 0$, define the partial cone
\[
C(k,\rho):=\left\{x\in \R^d:x_1^2 + \cdots + x_{d-1}^2 \leq x_d^2,0\leq x_d\leq \rho\right\}.
\]
If $\overrightarrow{\theta}$ is a unit vector, let $\overrightarrow{\theta}C(k,\rho)$ denote the cone $C(k,\rho)$ rotated so that its axis is $\R \overrightarrow{\theta}.$ If $f$ is a function from the set of metrics on $\R^d$ to $\R,$ let
\[
\mathrm{Av}(\overrightarrow{\theta},k,\rho)(f):= \frac{1}{\vert \overrightarrow{\theta}C(k,\rho)\vert}\sum_{y\in \overrightarrow{\theta} C(k,\rho)\cap \Z^d} f(D_{h_{0,1}}(\cdot + y,\cdot + y)).
\]
Then $\mathrm{Av}(\overrightarrow{\theta},k,\rho)(f) \to \mathbb{E}( f (D_{h_{0,1}}) )$ as $\rho \to \infty$ by the Birkhoff ergodic theorem. Therefore applying this to $f=1_{E_\lambda}$ we get that almost surely, for each fixed $\lambda,k >0$ and $\overrightarrow{\theta} \in \R,$ we have
\[
\mathrm{Av}(\overrightarrow{\theta},k,\rho) 1_{E_\lambda} \to \BB P(E_\lambda)
\]
as $\rho \to \infty.$ In particular, there is some $N_1=N_1(\overrightarrow{\theta},k,\lambda)$ such that for all $\rho \geq N_1$ we have
\begin{equation}\label{ineq}
\BB P(E_\lambda)-\frac{K_1}{\lambda^d} \leq \mathrm{Av}(\overrightarrow{\theta},k,\rho)(1_{E_\lambda}) \leq \BB P(E_\lambda)+\frac{K_1}{\lambda^d}.
\end{equation}
\begin{lemma} \label{claim}
Suppose that $\overrightarrow{\theta} \in \R^d$ is a fixed unit vector. Suppose that $K_1/\lambda^d<1/4.$ Then there exists a random variable $N_1$ depending on $k,\overrightarrow{\theta}, \lambda$ if $\rho>\max\{N_1,2k\}$ and $\rho'-\rho>\max\{2, K_2 \rho \lambda^{-d}\}$ for some large constant $K_2$ only depending on $K_1,$ then there is a $z'\in \Z^d \cap (\overrightarrow{\theta}C(k,\rho')\setminus \overrightarrow{\theta}C(k,\rho))$ such that
\[
\sup_{x\in \R^d, \vert x- z'\vert >1.} \frac{D_{h_{0,1}}(z',x)}{\vert x-z'\vert}\leq\lambda.
\]
\end{lemma}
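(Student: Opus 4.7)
The plan is a pigeonhole argument based on \eqref{ineq}, applied simultaneously at $\rho$ and $\rho'$. Write $N(\rho) := \#(e^{i\theta}C(k,\rho) \cap \Z^2)$, and note that $N(\rho)\,\mathrm{Av}(\theta,k,\rho)(1_{E_\lambda})$ is exactly the number of lattice points $y$ in the partial cone for which the translated event $E_\lambda(y)$ holds. By Lemma \ref{max} we have $\BB P(E_\lambda) \geq 1 - K_1/\lambda^2 \geq 3/4$, and \eqref{ineq} then gives, for any $\rho \geq N_1$,
\[
N(\rho)\bigl(\BB P(E_\lambda) - K_1/\lambda^2\bigr) \;\leq\; \sum_{y \in e^{i\theta}C(k,\rho) \cap \Z^2} 1_{E_\lambda(y)} \;\leq\; N(\rho)\bigl(\BB P(E_\lambda) + K_1/\lambda^2\bigr).
\]

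Applying the lower bound at $\rho'$ and the upper bound at $\rho$ and subtracting, I obtain
\[
\sum_{y \in \Z^2 \cap (e^{i\theta}C(k,\rho') \setminus e^{i\theta}C(k,\rho))} 1_{E_\lambda(y)} \;\geq\; N(\rho')\bigl(\BB P(E_\lambda) - K_1/\lambda^2\bigr) - N(\rho)\bigl(\BB P(E_\lambda) + K_1/\lambda^2\bigr).
\]
If the right-hand side is strictly positive, then some lattice point $z'$ in the shell must satisfy $E_\lambda(z')$, which is exactly the conclusion of the lemma. Positivity is equivalent to the ratio bound
\[
\frac{N(\rho')}{N(\rho)} \;>\; \frac{\BB P(E_\lambda) + K_1/\lambda^2}{\BB P(E_\lambda) - K_1/\lambda^2},
\]
whose right-hand side is at most $1 + cK_1/\lambda^2$ for a fixed absolute constant $c$, thanks to the bound $\BB P(E_\lambda) \geq 3/4$.

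What remains is to verify this ratio bound from the hypotheses $\rho > 2k$ and $\rho' - \rho > \max\{2, K_2\rho/\lambda^2\}$. The partial cone $C(k,\rho)$ has area $\rho^2/k$, and standard lattice-point counting gives $N(\rho) = \rho^2/k + O(\rho)$ once $\rho \geq 2k$, so $N(\rho')/N(\rho) \geq (\rho'/\rho)^2 (1 - O(k/\rho)) \geq 1 + 2(\rho'-\rho)/\rho - O(k/\rho)$. Choosing $K_2$ a sufficiently large multiple of $c$ absorbs both the discretization error and the required gap $cK_1/\lambda^2$. The side condition $\rho'-\rho > 2$ is only needed to guarantee that the shell contains at least one lattice point. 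The only genuine care point will be tracking the lattice discretization error, which is precisely what the assumption $\rho > 2k$ is there to control; beyond that I expect no serious obstacle.
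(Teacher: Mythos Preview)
Your proof is correct and follows essentially the same pigeonhole argument as the paper. The only cosmetic difference is that you use both sides of \eqref{ineq} (the lower bound at $\rho'$ and the upper bound at $\rho$), whereas the paper uses only the lower bound at $\rho'$ together with the trivial estimate $|D|\leq|\Delta|$; either way one arrives at the same sufficient condition $(1-\tfrac{2K_1}{\lambda^2})|\Delta'|\gtrsim \tfrac{K_1}{\lambda^2}|\Delta|$, which is then verified by the lattice-point count for the cone.
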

\begin{proof}
We have
\[
1-\frac{2K_1}{\lambda^d} \leq \BB P(E_\lambda) -\frac{K_1}{\lambda^d} \leq \mathrm{Av}(\overrightarrow{\theta},k,\rho)(1_{E_\lambda}).
\]
%%%%%%%%%%%%%%%%%%%%%%%%%%%%%%%%%%%%%%%%%%%%
\begin{figure}[ht!]
\begin{center}
\includegraphics[width=0.5\textwidth]{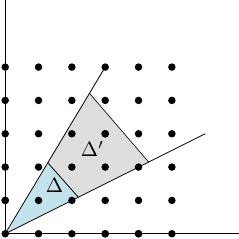}
\caption{\label{gridpic} The sets $\Delta$ and $\Delta'.$
}
\end{center}
\end{figure}
%%%%%%%%%%%%%%%%%%%%%%%%%%%%%%%%%%%%%%%%%%%%
Let 
\[
\Delta:=\Z^d \cap \overrightarrow{\theta}C(k,\rho), 
\quad 
\Delta':=\Z^d \cap (\overrightarrow{\theta}C(k,\rho')\setminus \overrightarrow{\theta}C(k,\rho))
\]
and
\[
D=\Delta \cap \{ y :E_\lambda(y) \mbox{ holds}\},   \quad  D'=\Delta' \cap \{ y : E_\lambda(y) \mbox{ holds} \}
\]
(see Figure \ref{gridpic}). Then
\[
1-\frac{2K_1}{\lambda^d} \leq \frac{\vert D\vert+\vert D'\vert}{\vert \Delta\vert + \vert \Delta'\vert}.
\]
Hence
\[
\vert D'\vert 
\geq \vert \Delta'\vert + (\vert \Delta \vert - \vert D \vert )    -\frac{2K_1}{\lambda^d}(\vert \Delta\vert+\vert \Delta'\vert)   
\geq \vert \Delta'\vert-\frac{2K_1}{\lambda^d}(\vert \Delta\vert+\vert \Delta'\vert)
\]
To prove the claim it is enough to have $\vert D'\vert \geq 1$. By the above inequality, for this it suffices to have $(1-\frac{2K_1}{\lambda^d})\vert \Delta'\vert \geq \frac{2K_1}{\lambda^d}\vert \Delta\vert +1.$ Explicit calculation of $\vert \Delta\vert,\vert \Delta'\vert$ gives this inequality for large enough $\rho$ and $\rho'-\rho.$ 
\end{proof}
Now if Lemma \ref{max} is false, then we would have
\[
\BB P\left\{ \limsup_{x\in \R^d, \vert x\vert \to \infty} \left\vert \frac{D_{h_{0,1}}(0,x)}{\vert x\vert}-\nu \right\vert >0 \right\}>0.
\]
If this is the case, then by compactness of $\p B(0,1),$ we can find a sequence $\{x_i\}_{i\in\mathbb N}$ with $x_i \to \infty$ such that 
\begin{equation} \label{eqn:y}
\frac{x_i}{\vert x_i\vert} \to y
\quad \text{and} \quad 
\left\vert \frac{D_{h_{0,1}}(0,x_i)}{\vert x_i\vert} - \nu \right\vert>\bar{\e} ,\quad \forall i =1,2,\dots
\end{equation}
for some $\bar{\e}>0.$ Now we make a series of choices. Let $\lambda$ be large enough so that 
\begin{equation}\label{lambdak}
\lambda^{1-d}<\bar{\e}/(32\sqrt{d}K_2),
\end{equation}
\begin{equation}
\frac{2K_1}{\lambda^d} < \frac{1}{4}
\end{equation}
Also, take $M$ large enough so that there is a $z \in V_M  = \frac{1}{M} \mathbb Z^d$ with $\vert z\vert=1$ and 
\begin{equation}\label{distyz}
\vert y-z\vert<\bar{\e}/10\nu\lambda.
\end{equation}
For $i\in \mathbb N$, let $n_i$ be such that
\begin{equation}\label{ni}
n_iM\leq \vert x_i\vert<(n_i+1)M,
\end{equation}
and let $N_0$ be large enough so that for any $i>N_0,$
\begin{equation}\label{M/xi}
M/\vert x_i\vert <\bar{\e}/20\nu\lambda,
\end{equation}
\begin{equation}\label{xiconv}
\left\vert y-\frac{x_i}{\vert x_i\vert} \right\vert<\bar{\e}/10\nu\lambda,
\end{equation}
\begin{equation}\label{nudist}
\vert (n_iM)^{-1}D_{h_{0,1}}(0,n_iMz) -\nu \vert<\bar{\e}/10
\end{equation}
(which is possible by the ergodic theorem) and
\begin{equation} \label{nimz}
\norm{n_iMz}>\max\{2K_2,\lambda^2/K_2,N_1\}
\end{equation}
(see Figure \ref{nipic}).

%%%%%%%%%%%%%%%%%%%%%%%%%%%%%%%%%%%%%%%%%%%%
\begin{figure}
\begin{center}
\includegraphics[width=0.35\textwidth]{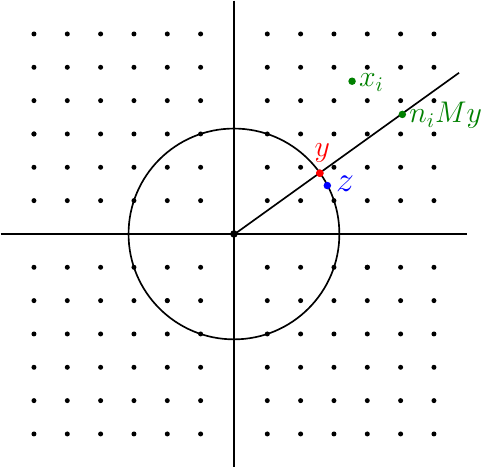}
\caption{\label{nipic} The $n_i,x_i$ and the renormalized limit $y.$
}
\end{center}
\end{figure}
%%%%%%%%%%%%%%%%%%%%%%%%%%%%%%%%%%%%%%%%%%%%

\begin{lemma}
Let $\overrightarrow{\theta}\in \p B_1(0)$ be fixed. Then for large enough $k$ and $\rho,\rho'$ as in Lemma \ref{claim}, there exists a $z' \in \mathbb{Z}^d\cap \overrightarrow{\theta}(C(k,\rho')\setminus C(k,\rho))$ such that
\begin{equation} \label{distcomp}
\left\vert \frac{D_{h_{0,1}}(0,x_i)}{\vert x_i\vert}-\frac{D_{h_{0,1}}(0,n_iMz)}{\vert x_i\vert} \right\vert \leq \lambda\frac{(\vert x_i-n_iMz\vert+2\vert z'-n_iMz\vert)}{\vert x_i\vert}.
\end{equation}
\end{lemma}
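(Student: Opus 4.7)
My plan is to obtain \eqref{distcomp} by a triangle-inequality argument that routes paths through an auxiliary lattice point $z'$ whose existence is provided by Lemma \ref{claim}.

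First I would specify how to invoke Lemma \ref{claim}. The idea is to choose the cone direction $\theta$ so that $e^{i\theta}$ points along (or very close to) the direction $z$ of the target point $n_iMz$, and to choose the radii $\rho < \rho'$ so that the annular cone $e^{i\theta}(C(k,\rho')\setminus C(k,\rho))$ contains points near $n_iMz$ (in particular, so that $|z'-n_iMz|$ is small relative to $|x_i|$, where the allowed slack is controlled by $k$). Using \eqref{nimz}, the required lower bounds on $\rho$ and $\rho'-\rho$ from Lemma \ref{claim} can be met for $i$ large, so such a $z'\in\Z^2\cap e^{i\theta}(C(k,\rho')\setminus C(k,\rho))$ indeed exists and satisfies
\[
\sup_{x\in\mathbb{C},\,|x-z'|>1}\frac{D_{h_{0,1}}(z',x)}{|x-z'|}\leq\lambda.
\]

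The main calculation is then purely a triangle-inequality manipulation. Writing
\[
D_{h_{0,1}}(0,x_i)\leq D_{h_{0,1}}(0,n_iMz)+D_{h_{0,1}}(n_iMz,z')+D_{h_{0,1}}(z',x_i),
\]
and using the maximal property of $z'$ to bound the last two summands by $\lambda|n_iMz-z'|$ and $\lambda|x_i-z'|$ respectively, I get
\[
D_{h_{0,1}}(0,x_i)-D_{h_{0,1}}(0,n_iMz)\leq \lambda\bigl(|n_iMz-z'|+|x_i-z'|\bigr).
\]
Applying the triangle inequality $|x_i-z'|\leq |x_i-n_iMz|+|n_iMz-z'|$ bounds the right-hand side by $\lambda(|x_i-n_iMz|+2|z'-n_iMz|)$. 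The symmetric argument, routing $D_{h_{0,1}}(0,n_iMz)$ through $z'$ and $x_i$, yields the same bound with the roles reversed, so the absolute value satisfies the claimed inequality. Dividing through by $|x_i|$ gives \eqref{distcomp}.

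The only substantive point is the choice of parameters that makes Lemma \ref{claim} applicable while simultaneously keeping $|z'-n_iMz|$ small enough to later feed into the contradiction with \eqref{nudist}, \eqref{xiconv} and \eqref{M/xi}; this is where the quantitative conditions \eqref{lambdak}--\eqref{nimz} chosen just above come in. No further analytic input is needed for the present lemma itself: once $z'$ is in hand, the statement reduces to two applications of the triangle inequality and a division by $|x_i|$.
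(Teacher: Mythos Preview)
Your argument is correct and follows essentially the same approach as the paper: invoke Lemma~\ref{claim} to produce $z'$ with the maximal property, then use the triangle inequality through $z'$ together with $|x_i-z'|\le |x_i-n_iMz|+|n_iMz-z'|$. The only cosmetic difference is that the paper handles both directions at once via the reverse triangle inequality $|D_{h_{0,1}}(0,x_i)-D_{h_{0,1}}(0,n_iMz)|\le D_{h_{0,1}}(x_i,n_iMz)$, whereas you split into two symmetric one-sided bounds; the content is identical.
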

\begin{proof}
By Lemma \ref{claim}, there is a $z' \in \Z^d \cap (\overrightarrow{\theta}C(k,\rho')\setminus \overrightarrow{\theta}C(k,\rho))$ with $\rho=\norm{n_iMz}$ such that
\begin{equation}\label{z'label}
\sup_{\substack{x\in \R^d\\ \vert x- z'\vert \geq 1}} \frac{D_{h_{0,1}}(z',x)}{\vert z'-x\vert}\leq \lambda.
\end{equation}
By repeated applications of the triangle inequality,
\begin{eqnarray}\label{distcomp'}
	\left\vert \frac{D_{h_{0,1}}(0,x_i)}{\vert x_i\vert}-\frac{D_{h_{0,1}}(0,n_iMz)}{\vert x_i\vert} \right\vert &\leq& \left\vert \frac{D_{h_{0,1}}(x_i,n_iMz)}{\vert x_i\vert} \right\vert\nonumber\\
	&\leq & \frac{(D_{h_{0,1}}(x_i,z')+D_{h_{0,1}}(z',n_iMz))}{\vert x_i\vert}\nonumber\\
	&\leq & \lambda \frac{(\vert x_i-z'\vert+\vert z'-n_iMz\vert)}{\vert x_i\vert}\nonumber\\
	&\leq & \lambda\frac{(\vert x_i-n_iMz\vert+2\vert z'-n_iMz\vert)}{\vert x_i\vert}.
\end{eqnarray}
\end{proof}

Recall $y$ from~\eqref{eqn:y}. We claim that 
\begin{equation}\label{contradiction}
\left\vert \frac{D_{h_{0,1}}(0,x_i)}{\vert x_i\vert} - \nu \right\vert < \bar{\e}
\end{equation}
which will contradict \eqref{eqn:y}. By the triangle inequality,
\begin{eqnarray}\label{comp1}
\vert x_i-n_iMz\vert &\leq& \vert x_i\vert \left( \left\vert \frac{x_i}{\vert x_i\vert}-y\right\vert + \left\vert y-\frac{n_iMz}{\vert x_i\vert}\right\vert\right)\nonumber\\
&\leq&\vert x_i\vert \left( \left\vert y-\frac{x_i}{\vert x_i\vert} \right\vert + \vert y-z\vert + \vert z\vert \left( 1-\frac{n_iM}{\vert x_i\vert} \right) \right)\nonumber\\
&=&\vert x_i\vert \left( \left\vert y-\frac{x_i}{\vert x_i\vert} \right\vert + \vert y-z\vert + \left( 1-\frac{n_iM}{\vert x_i\vert} \right) \right).
\end{eqnarray}
Let $z'\in \mathbb{Z}^d \cap (\overrightarrow{\theta}C(k,\rho')\setminus \overrightarrow{\theta}C(k,\rho))$ be as in \eqref{z'label} taking $\rho=\norm{n_iMz},\rho'-\rho=2K_2\rho/\lambda^d.$ Using \eqref{nimz} we obtain
\begin{eqnarray}\label{comp2}
\vert z'-n_iMz \vert &\leq& \sqrt{d}\left( \frac{\rho'}{k}+(\rho'-\rho)\right)\nonumber\\
&\leq & \sqrt{d}\left(2K_2 \norm{n_iMz}/k\lambda^d+ \frac{\norm{n_iMz}}{k} + \frac{2K_2 \norm{n_iMz}}{\lambda^d}\right)\nonumber\\
&\leq & \sqrt{d} \left( \frac{2 K_2}{k \lambda^d}+\frac{2 K_2}{\lambda^d} + \frac{1}{k}\right)n_i M \nonumber \\
&\leq & \sqrt{d}\left(\frac{4K_2}{\lambda^d} + \frac{1}{k}\right) n_iM.
\end{eqnarray}
In the first line we used the fact that $z',n_iMz \in e^{i\theta}(C(k,\rho')\setminus C(k,\rho)),$ and so it is easy to see that
\[
\vert z'-n_iMz\vert_1 \leq \sqrt{d}\vert z'-n_iMz\vert \leq \sqrt{d}\left(\frac{\rho'}{k} + (\rho'-\rho)\right) .
\]
The first line follows.
Plugging \eqref{comp1} and \eqref{comp2} into \eqref{distcomp}, we obtain
\begin{eqnarray*}
\left\vert \frac{D_{h_{0,1}}(0,x_i)}{\vert x_i \vert}-\frac{D_{h_{0,1}}(0,n_iMz)}{\vert x_i\vert} \right\vert \leq \lambda \left( \left\vert y-\frac{x_i}{\vert x_i \vert}\right\vert + \vert y-z\vert + \left( 1-\frac{n_iM}{\vert x_i\vert}\right)+ \sqrt{d}\left(\frac{4K_2}{\lambda^d}+\frac{1}{k} \right)\frac{n_iM}{\vert x_i\vert}\right).
\end{eqnarray*}
Now using \eqref{distyz},\eqref{M/xi}, and \eqref{xiconv} we obtain hat for large enough $\lambda, k$ we have
\begin{equation} \label{triangle0}
\left\vert \frac{D_{h_{0,1}}(0,x_i)}{\vert x_i \vert}-\frac{D_{h_{0,1}}(0,n_iMz)}{\vert x_i\vert} \right\vert \leq \lambda \left(  \frac{\bar{\e}}{10 \lambda}+\frac{\bar{\e}}{10\nu\lambda}+\frac{\bar{\e}}{20\nu\lambda}\right)<\frac{\bar{\e}}{2},
\end{equation}
where in the last line we used \eqref{lambdak}.
We also have by \eqref{ni} and \eqref{nudist} that
\begin{eqnarray}\label{triangle1}
\left\vert \frac{D_{h_{0,1}}(0,n_iMz)}{\vert x_i\vert} -  \frac{D_{h_{0,1}}(0,n_iMz)}{n_iM}\right\vert &\leq & \frac{M}{\vert n_iMx_i\vert} D_{h_{0,1}}(0,n_iMz)\leq \frac{M}{\vert x_i\vert} \left(\nu+\frac{\bar{\e}}{10}\right)\nonumber\\
&\leq& \frac{\bar{\e}}{20\nu\lambda}\left(\nu+\frac{\bar{\e}}{10}\right)\leq \frac{\bar{\e}}{10}
\end{eqnarray}
and
\begin{equation}\label{triangle2}
\left\vert \frac{D_{h_{0,1}}(0,n_iMz)}{n_iM} - \nu\right\vert \leq \frac{\bar{\e}}{10}.
\end{equation}
Finally, combining \eqref{triangle1}, \eqref{triangle2}, and \eqref{triangle0}, we obtain
\begin{eqnarray*}
\left\vert \frac{D_{h_{0,1}}(0,x_i)}{\vert x_i\vert} - \nu \right\vert &\leq & \left\vert \frac{D_{h_{0,1}}(0,x_i)}{\vert x_i\vert}-\frac{D_{h_{0,1}}(0,n_iMz)}{\vert x_i\vert}\right\vert\\
&+& \left\vert \frac{D_{h_{0,1}}(0,n_iMz)}{\vert x_i\vert}-\frac{D_{h_{0,1}}(0,n_iMz)}{n_iM}\right\vert\\
&+& \left\vert \frac{D_{h_{0,1}}(0,n_iMz)}{n_iM}-\nu\right\vert\\
&<& \bar{\e}
\end{eqnarray*}
which proves \eqref{contradiction}.

Now we are ready to prove Proposition \ref{approxeucdist}.

\begin{proof}[Proof of Proposition \ref{approxeucdist}]
Let $\Psi:(0,\infty)\to(0,\infty)$ be such that $\lim_{\beta\to 0} \Psi(\beta) = 0$ and $\lim_{\beta\to 0} \Psi(\beta)/\beta=\infty.$ Then we claim that for any fixed $\bar{x} \in \delta \mathbb{Z}^d$ we have with probability tending to $1$ as $\beta \to 0$ that
\[
\beta^{\xi Q-1}(1-\e) \nu\vert x-\bar{x}\vert \leq D_{h_{0,\beta}}(x,\bar{x}) \leq \beta^{\xi Q-1}(1+\e) \nu\vert x-\bar{x}\vert , \quad \forall x\in{\R^d} \: \text{such that} \: \vert x-\bar{x}\vert > \Psi(\beta) .
\] 
Indeed, in Proposition \ref{fixedcenter} $0$ could be replaced with any other fixed point $\bar{x}\in \mathbb{Z}^d,$ that is, almost surely we have that
\[
\limsup_{x \to \infty} 
\left\vert \frac{D_{h_{0,1}}(x,\bar{x})}{\vert x-\bar{x}\vert}-\nu\right\vert = 0. 
\]
By Lemma \ref{whitenoiserescaling} and Lemma \ref{staterg} we have that
\begin{eqnarray} \label{eqn:eucldistscale}
&&\BB{P}(\beta^{\xi Q-1}(1-\e) \nu\vert x-\bar{x}\vert \leq D_{h_{0,\beta}}(x,\bar{x}) \leq \beta^{\xi Q-1}(1+\e) \nu\vert x-\bar{x}\vert ,\: \forall x \: \text{s.t.} \: \vert x-\bar{x}\vert>\Psi(\beta)) \notag \\
&& = \BB{P}((1-\e) \nu\vert x\vert \leq D_{h_{0,1}}(x,0) \leq (1+\e) \nu\vert x\vert,\: \forall x \: \text{s.t.} \: \vert x\vert>\Psi(\beta)/\beta) \to 1
\end{eqnarray}
as $\beta \to 0.$ 

Now fix $\delta > 0$ and assume that $\beta$ is small enough that $\Psi(\beta) < \delta$. By a union bound, it holds with probability tending to $1$ as $\beta \to 0$ that for each $\bar{x}\in (\delta\mathbb{Z}^d)\cap[-R,R]^d,$ and all $x \in {\R^d}$ such that $\vert x-\bar{x}\vert> \delta,$ we have
\[
(1-\e)\nu \beta^{\xi Q-1} \vert x-\bar{x}\vert \leq D_{h_{0,\beta}}(\bar{x},x) \leq (1+\e)\nu \beta^{\xi Q-1} \vert x-\bar{x}\vert
\]
Now suppose that $x,y \in [-R,R]^d.$ Then there exists an $\bar{x}\in \delta \mathbb{Z}^d$ such that
\[
\delta < \vert x-\bar{x}\vert \leq C\delta
\]
for some $d$ dependent constant $d$ and
\[
\delta< \vert \bar{x}-y\vert.
\]
Therefore we obtain that
\begin{eqnarray*}
D_{h_{0,\beta}}(x,y) &\leq& D_{h_{0,\beta}}(\bar{x},x) + D_{h_{0,\beta}}(\bar{x},y) \leq (1+\e)\nu \beta^{\xi Q-1} (\vert x-\bar{x}\vert+\vert \bar{x}-y\vert)\\
&\leq& (1+\e)\nu \beta^{\xi Q-1} (2\vert x-\bar{x}\vert+\vert x-y\vert) \leq (1+\e)\nu \beta^{\xi Q-1} \vert x-y\vert + 2C\delta (1+\e)\nu \beta^{\xi Q-1}.
\end{eqnarray*}
Similarly,
\[
D_{h_{0,\beta}}(x,y) \geq (1-\e)\nu \beta^{\xi Q-1} \vert x-y\vert - 4\delta (1+\e)\nu \beta^{\xi Q-1}.
\]
Noting that $\e \nu \beta^{\xi Q-1} \vert x-y\vert + 4\delta (1+\e)\nu \beta^{\xi Q-1}$ can be made smaller than any desired multiple of $\beta^{\xi Q-1} |x-y|$ by making $\ep$ and $\delta$ small enough, we conclude the proof of Proposition \ref{approxeucdist}.
\end{proof}

\subsection{Controlling the total mass}

Let $\alpha$ be defined by
\[
\alpha = \mathbb{E} (\mu_{h_{0,1}}([0,1]^d))<\infty
\]
(which is well defined by Theorem 2.11 in \cite{rhodes-vargas-review}). Then by translation invariance, we have that for any positive integer $n,$
\[
\mathbb{E}(\mu_{h_{0,1}}([-2^n,2^n]^d)) \leq 2^d\alpha 2^{dn}.
\]
Therefore, by Markov's inequality we have that for any small $c>0,$
\[
\mathbb{P}(\mu_{h_{0,1}}([-2^n,2^n]^d)) \leq 2^d\alpha \left(2^n\right)^{d+c}) \geq 1- \frac{1}{2^{nc}} \to 0
\]
as $n \to \infty.$
By Lemma~\ref{staterg}, $h_{0,1}$ agrees in law with $h_{0,\beta}(\beta\cdot)$. By the Weyl scaling and LQG coordinate change properties of $\mu_{h_{0,1}}$, we deduce that $\mu_{h_{0,1}}([-2^n,2^n]^d)$ agrees in law with $\beta^{-\gamma Q}\mu_{h_{0,\beta}}([-\beta 2^n,\beta 2^n]^d)$. Hence picking $n$ such that $2^n\beta \in [1,11/10],$ we obtain with probability tending to 1 as $\beta \to 0$ that
\begin{equation}
\mu_{h_{0,\beta}}([-1,1]^d) \leq 4 \alpha \beta^{\gamma Q-d-c}.
\end{equation}

\subsection{Making \texorpdfstring{$h_{\beta,\infty}$}{hb,} approximate with positive probability}

We will need a couple of lemmas.
\begin{lemma}\label{invconv}
Let $f:{\R^d}\to\R$ be a smooth function and let $\beta>0.$ Let $p_{\beta^2/2}$ be as in \eqref{heatkernel}. Suppose that $f$'s Fourier transform $\hat{f}$ is compactly supported. Then there exists a $\phi$ with $\nabla \phi, \phi \in L^2({\R^d})$ such that
\[
f=p_{\beta^2/2}\ast \phi.
\]
\end{lemma}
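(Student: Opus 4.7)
The plan is to construct $\phi$ explicitly via the Fourier transform. Taking Fourier transforms of the equation $f = p_{\beta^2/2} \ast \phi$ and using the convolution theorem, we need $\hat\phi(\zeta) = \hat f(\zeta)/\hat p_{\beta^2/2}(\zeta)$. Recall from the computation in Lemma \ref{samelaw} that
\[
\hat p_t(\zeta) = e^{-2\pi^2 t|\zeta|^2},
\]
so we define
\[
\hat\phi(\zeta) := \hat f(\zeta)\, e^{\pi^2 \beta^2 |\zeta|^2},
\]
and take $\phi := \mathcal F^{-1}(\hat\phi)$.

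Next I would verify that this $\phi$ has the required regularity. Since $\hat f$ is compactly supported, the function $\hat\phi$ is also compactly supported (contained in the same compact set) and bounded there, because the Gaussian factor $e^{\pi^2\beta^2|\zeta|^2}$ is bounded on compact sets. In particular $\hat\phi \in L^2(\mathbb{C})$, so by Plancherel $\phi \in L^2(\mathbb{C})$. For the gradient, observe that the Fourier transform of $\partial_j \phi$ is $2\pi i\zeta_j \hat\phi(\zeta)$, which is again bounded with compact support, hence in $L^2$, so $\nabla\phi \in L^2(\mathbb{C})$ by Plancherel.

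Finally I would verify the convolution identity: since $\hat p_{\beta^2/2}\hat\phi = \hat f$ by construction and the convolution of the Schwartz function $p_{\beta^2/2}$ with the (smooth, rapidly decaying in the spatial variable via Paley-Wiener since $\hat\phi$ has compact support) function $\phi$ satisfies $\mathcal F(p_{\beta^2/2} \ast \phi) = \hat p_{\beta^2/2}\hat\phi$, Fourier inversion gives $p_{\beta^2/2}\ast \phi = f$. There is no real obstacle here; the only point requiring any care is ensuring the formal division by $\hat p_{\beta^2/2}$ produces a genuine $L^2$ function, which is exactly where the compact support of $\hat f$ is used to tame the exponentially growing factor $1/\hat p_{\beta^2/2}$.
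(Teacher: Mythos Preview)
Your proof is correct and follows essentially the same approach as the paper: define $\phi$ via $\hat\phi = \hat f / \hat p_{\beta^2/2}$, use the compact support of $\hat f$ to conclude $\hat\phi$ is compactly supported (hence $\phi\in H^1$), and verify the convolution identity by Fourier inversion. Your write-up is in fact slightly more detailed in checking $\phi,\nabla\phi\in L^2$ separately via Plancherel, whereas the paper simply asserts $\phi\in H^1(\mathbb{C})$.
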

\begin{proof}
Since $f$ is compactly supported, its Fourier transform is well defined. Let $\mathcal{F}^{-1}$ denote the inverse Fourier transform. Define
\[
\phi:=\mathcal{F}^{-1}\left( \frac{\hat{f}}{\hat{p}_{\beta^2/2}}\right).
\]
We note that by \eqref{eq102}, $\hat{p}_{\beta^2/2}$ never vanishes, and hence $\frac{\hat{f}}{\hat{p}_{\beta^2/2}}$ is well defined and by the compact support of $\hat f$ it is compactly supported. Therefore $\phi\in H^1({\R^d}).$ Also,
\[
\mathcal{F}(p_{\beta^2/2}\ast \phi) = \hat{p}_{\beta^2/2} \frac{\hat{f}}{\hat{p}_{\beta^2/2}}=\hat{f}.
\]
This completes the proof.
\end{proof}

The following lemma tells us that we can approximate uniformly by functions with compact Fourier support.
\begin{lemma}\label{fouriernet}
Let $R , \e > 0$. Then there exists a countable family of smooth functions such that for any $f\in \mathcal{F},$ the function $\hat{f}$ is compactly supported, and if $g$ is continuous in $\overline{B_R(0)}$ then there exists an element $f\in \mathcal{F}$ such that $\sup_{z\in B_R(0)}\vert g(z)-f(z)\vert<\e.$ In particular, all elements in this net are Schwartz.
\end{lemma}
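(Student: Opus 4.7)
The plan is to prove the lemma in two movements: first show that the class of Schwartz functions with compactly supported Fourier transform is dense in $C(\overline{B_R(0)})$ in the sup norm, and then extract a countable $\varepsilon$-net using the separability of $C(\overline{B_R(0)})$.

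For the density step, I would construct a mollifier whose Fourier transform is compactly supported. Concretely, pick a smooth, radial, nonnegative bump $\psi : \mathbb{C} \to \mathbb{R}$ with $\psi(0) = 1$ and compact support, and set $\phi := \mathcal{F}^{-1}(\psi)$. Then $\phi$ is Schwartz, $\hat{\phi} = \psi$ has compact support, and $\int \phi = \psi(0) = 1$. Define the rescaling $\phi_\delta(x) := \delta^{-2}\phi(x/\delta)$, so that $\hat{\phi}_\delta(\zeta) = \psi(\delta \zeta)$ is still compactly supported and $\int \phi_\delta = 1$. Given a continuous $g$ on $\overline{B_R(0)}$, extend it by the Tietze extension theorem to a continuous function $\tilde g$ on $\mathbb{C}$ with compact support. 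The convolution $\phi_\delta \ast \tilde g$ is Schwartz (rapid decay of $\phi_\delta$ together with the compact support of $\tilde g$ gives rapid decay of all derivatives), and its Fourier transform $\hat{\phi}_\delta \cdot \hat{\tilde g}$ is compactly supported (since $\hat{\phi}_\delta$ is). Uniform continuity of $\tilde g$ implies $\phi_\delta \ast \tilde g \to \tilde g$ uniformly on $\mathbb{C}$, hence uniformly on $\overline{B_R(0)}$ where $\tilde g = g$.

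For the countable extraction, the metric space $(C(\overline{B_R(0)}), \|\cdot\|_\infty)$ is separable (e.g.\ polynomials with rational coefficients are dense by the Stone--Weierstrass theorem), so every subset is separable as well. In particular, the set of restrictions to $\overline{B_R(0)}$ of band-limited Schwartz functions admits a countable dense subset $\{f_n\}_{n \geq 1}$. Defining $\mathcal{F} := \{f_n\}$, viewed as Schwartz functions on all of $\mathbb{C}$, gives a countable family each of whose elements has compactly supported Fourier transform. By transitivity of density, $\mathcal{F}$ is dense in $C(\overline{B_R(0)})$, which yields the desired $\varepsilon$-net property for any continuous $g$.

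The main (and essentially only) technical point is constructing the mollifier with compactly supported Fourier transform, which is resolved by starting from a bump function in Fourier space and taking its inverse Fourier transform; everything else is standard approximation theory and separability. A minor bookkeeping issue is checking that $\phi_\delta \ast \tilde g$ is Schwartz, but this follows immediately from $\tilde g$ being bounded with compact support and $\phi_\delta$ being Schwartz, so that $\partial^\alpha(\phi_\delta \ast \tilde g) = (\partial^\alpha \phi_\delta) \ast \tilde g$ decays faster than any polynomial.
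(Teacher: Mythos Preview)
Your proof is correct and rests on the same Fourier-analytic idea as the paper's: approximate by band-limiting. The paper organizes it slightly differently. It first fixes a countable $\e/2$-net $\tilde{\mathcal F}$ of smooth functions in $C(\overline{B_R(0)})$, then for each $f\in\tilde{\mathcal F}$ and $N\in\mathbb N$ sets $f_N:=\mathcal F^{-1}(\hat f\,\chi(\cdot/N))$ with $\chi$ a smooth cutoff, so that $\|f_N-f\|_\infty\le\|\hat f_N-\hat f\|_1\to 0$; the family $\{f_N\}$ is then automatically countable. You instead prove density first (via convolution with a band-limited mollifier, which is exactly Fourier-side multiplication by a cutoff viewed from the spatial side) and extract countability afterward using separability of $C(\overline{B_R(0)})$. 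The paper's route is marginally more direct on the countability step, while yours is more self-contained about why the approximation works (explicit mollifier, Tietze extension). Neither gains anything substantive over the other; the core step is identical.
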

\begin{proof}
Take $\tilde{\mathcal{F}}$ to be any countable $\e/2$-net of smooth functions in $\mathcal{C}(B_R(0))$. Let $\chi : {\R^d} \to [0,1]$ be a smooth function such that $\chi(y)=1$ for $\vert y\vert<1$ and $\chi(y)=0$ for $\vert y\vert\geq 2.$ Then for any $f\in \tilde{\mathcal{F}}$ and $N\in \N,$ let $f_N=\mathcal{F}^{-1}\left(\hat{f}\chi\left(\frac{\cdot}{N}\right)\right).$ Then $f_N$ has compactly supported Fourier transform, and $\norm{f_N-f}_\infty\leq\norm{\hat{f_N}-\hat{f}}_1 \to 0$ as $N \to 0.$ Now taking $\mathcal{F}=\{f_N: f\in\tilde{\mathcal{F}},N\in\N\}$ we have our desired net.
\end{proof}
%Recall that by Lemma \ref{samelaw} we know that $h_{\beta,\infty}$ and $h \ast p_{\beta^2/2}$ have the same law up to additive constants. We now need to also control this constant. 

Next, we need to construct functions such that the sphere average at the unit sphere can be arbitrarily large, but when convolved with $p_{\beta^2/2}$ is small.
\begin{lemma}\label{Psi}
Let $\beta>0,$ $K>0$ and $\eta>0.$ Then there is a smooth function $\Psi_{K,\eta}=\Psi_{K,\eta,\beta}$ such that
\begin{equation}\label{statement1}
\int_{\p B_1(0)} \Psi_{R,\eta} =2 \pi K,
\end{equation}
and
\begin{equation}\label{statement2}
\norm{\Psi_{K,\eta} \ast p_{\beta^2/2}}_{\infty} < \eta.
\end{equation}
\end{lemma}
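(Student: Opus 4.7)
The plan is to take $\Psi_{K,\eta}$ to be a radial function that oscillates rapidly in the radial direction, so that the circle average condition is fulfilled by the amplitude but the $L^\infty$ norm of the convolution becomes small through cancellation. Specifically, let $a:[0,\infty)\to[0,1]$ be a smooth bump with $a(1)=1$ and $\mathrm{supp}(a)\subset(1/2,3/2)$, and set
\[
\Psi_{K,\eta}(z):=K\,a(|z|)\cos\bigl(N(|z|-1)\bigr),
\]
where $N=N(K,\eta,\beta)>0$ is a large parameter to be chosen. This $\Psi_{K,\eta}$ is smooth, because $a(|\cdot|)$ vanishes near the origin (the only place where $|z|$ fails to be smooth). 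The circle integral condition \eqref{statement1} is immediate:
\[
\int_{\partial B_1(0)}\Psi_{K,\eta}\,d\sigma=\int_0^{2\pi}K\,a(1)\cos(0)\,d\theta=2\pi K.
\]

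The main step is the bound \eqref{statement2}. Writing $\cos(N(|z|-1))=\mathrm{Re}\,e^{iN\phi(z)}$ with $\phi(z):=|z|-1$, the phase satisfies $|\nabla\phi(z)|=1$ for all $z\in\mathbb{C}\setminus\{0\}$, and hence on all of $\mathrm{supp}\,a(|\cdot|)$. This is the key geometric fact that allows a standard non-stationary phase integration by parts. Using the identity $e^{iN\phi}=\frac{1}{iN}\,\nabla\phi\cdot\nabla e^{iN\phi}$ together with integration by parts (boundary terms vanish by compact support),
\[
\Psi_{K,\eta}*p_{\beta^2/2}(z_0) = -\mathrm{Re}\,\frac{K}{iN}\int_{\mathbb{C}}e^{iN\phi(z)}\,\nabla\cdot\bigl(a(|z|)\,p_{\beta^2/2}(z_0-z)\,\nabla\phi(z)\bigr)\,dz.
\]
Expanding the divergence one obtains terms involving $a$, $a'$, $\Delta\phi=1/|z|$ (bounded on $\mathrm{supp}\,a(|\cdot|)$), and $\nabla_z p_{\beta^2/2}(z_0-z)$. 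Using $\|p_{\beta^2/2}\|_{L^1(\mathbb{C})}=1$ and the standard estimate $\|\nabla p_{\beta^2/2}\|_{L^1(\mathbb{C})}=O(\beta^{-1})$, the right side is bounded uniformly in $z_0$ by $CK/(N\beta)$, for a constant $C$ depending only on $a$. Choosing $N\geq CK/(\beta\eta)$ yields \eqref{statement2}.

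The only delicate point is reconciling \eqref{statement1} and \eqref{statement2}: the circle integral forces the amplitude of $\Psi_{K,\eta}$ near $\partial B_1(0)$ to be of order $K$, while the convolution against the heat kernel must have $L^\infty$ norm of order $\eta\ll K$. The resolution is the interplay between dimensions: $\cos(N(r-1))\big|_{r=1}=1$, so the one-dimensional circle integral sees only the amplitude $K$, whereas on the two-dimensional side the rapidly oscillating radial factor is cancelled when integrated against the heat kernel, provided $N\beta\gg 1$. If one prefers a Fourier-based argument, the same conclusion follows (with exponential decay in $N\beta$) by noting that $\hat\Psi_{K,\eta}$ is concentrated where $|\xi|\sim N$ and $\hat p_{\beta^2/2}(\xi)=e^{-2\pi^2\beta^2|\xi|^2}$.
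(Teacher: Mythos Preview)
Your proof is correct, but it takes a genuinely different route from the paper's. The paper simply takes $\Psi_{K,\eta}$ to be a nonnegative bump of height $K$ supported in a thin annulus $\{1-\bar\eta\le|z|\le 1+\bar\eta\}$, so that $\|\Psi_{K,\eta}\|_{L^1}\le 4K\bar\eta$, and then uses the crude estimate $\|\Psi_{K,\eta}*p_{\beta^2/2}\|_\infty\le\|\Psi_{K,\eta}\|_{L^1}\|p_{\beta^2/2}\|_\infty\le \frac{4K\bar\eta}{\pi\beta^2}$; choosing $\bar\eta$ small finishes. By contrast, you keep the support of order one and achieve smallness through rapid radial oscillation and a non-stationary phase integration by parts. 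Your argument is sound (one very minor point: the bound $CK/(N\beta)$ with $C$ depending only on $a$ is only the dominant term for small $\beta$; for general fixed $\beta>0$ the correct uniform bound is $CK(1+\beta^{-1})/N$, which of course still gives \eqref{statement2} for $N$ large). The paper's approach is considerably more elementary---a two-line area computation---while yours gives a sharper qualitative picture (e.g., via the Fourier remark you get exponential decay $e^{-cN^2\beta^2}$ rather than $O(1/N)$), but for the purposes of this lemma the extra precision is not needed.
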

\begin{proof}
Let $\bar{\eta}>0$ be a small constant to be chosen later. Let $\Psi_{K,\eta}$ be such that
\[
\Psi_{K,\eta}(z)=
\left\{
\begin{matrix}
K & \mbox{ if }z \in \p B_1(0),\\
0 & \mbox{ if } \vert z\vert \leq 1-\bar{\eta} \mbox{ or } \vert z\vert \geq 1+\bar{\eta}.
\end{matrix}
\right.
\]
Suppose additionally that $\Psi_{K,\eta}(z) \in [0,K]$ for all $z.$ Then it is clear that \eqref{statement1} holds. For \eqref{statement2}, note that $p_{\beta^2/2}(z) \lesssim \frac{1}{\beta^d}$ for any $z\in {\R^d}.$ Therefore
\[
\Psi_{K,\eta}\ast p_{\beta^2/2}(z_0)=\int_C \Psi_{K,\eta} (z) p_{\beta^2/2}(z_0-z) \lesssim \frac{1}{\beta^d} \int_{{\R^d}} \Psi_{K,\eta}(z) dz \leq \frac{1}{ \beta^d} R \left(\pi(1+\bar{\eta})^2-\pi(1-\bar{\eta})^2\right) = \frac{4K}{\beta^d} \bar{\eta}.
\]
Therefore if we take $\bar{\eta} \leq \frac{\beta^2 \eta}{4K}$ we obtain \eqref{statement2}.
\end{proof}
The following lemma tells us that the sphere average of $h\ast p_{\beta^2/2}$ at the unit sphere is small if $\beta$ is small enough.
\begin{lemma}\label{smallcircavg}
Let $\e>0.$ It holds with probability converging to $1$ as $\beta \to 0$ that
\[
\left\vert \left(h \ast p_{\beta^2/2}\right)_1(0)\right\vert \leq \e.
\]
\end{lemma}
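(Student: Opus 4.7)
The plan is to express $(h^{\mathbb{C}} \ast p_{\beta^2/2})_1(0)$ as a centered Gaussian random variable, show that its variance tends to zero as $\beta \to 0$, and then conclude convergence in probability by Chebyshev's inequality.

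First, by Fubini, I would write
\[
(h^{\mathbb{C}} \ast p_{\beta^2/2})_1(0) = \int_{\mathbb{C}} h^{\mathbb{C}}(w)\,\bar{p}_\beta(w)\,dw,
\]
where $\bar{p}_\beta(w) := \frac{1}{2\pi}\int_{\p B_1(0)} p_{\beta^2/2}(z-w)\,d\sigma(z)$ is a smooth probability density (namely, the density of $Z_\beta + U$ for independent $Z_\beta \sim \mathcal{N}(0,(\beta^2/2) I_2)$ and $U$ uniform on $\p B_1(0)$), which converges weakly to the uniform probability measure $\mu_1$ on $\p B_1(0)$ as $\beta \to 0$. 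Since $h^{\mathbb{C}}$ is a centered Gaussian field with covariance kernel $G$, this circle average is a centered Gaussian with variance
\[
V(\beta) := \iint_{\mathbb{C}\times\mathbb{C}} G(w,w')\,\bar{p}_\beta(w)\,\bar{p}_\beta(w')\,dw\,dw'.
\]

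The key claim is that $V(\beta) \to \iint G\,d\mu_1\,d\mu_1 = \mathrm{Var}((h^{\mathbb{C}})_1(0)) = 0$, where the last equality encodes our normalization from Definition~\ref{def:gff}. To verify that the limit is zero directly, note that for $w,w' \in \p B_1(0)$ one has $G(w,w') = -\log|w-w'|$, and an explicit computation (using $|e^{i\theta_1} - e^{i\theta_2}| = 2|\sin((\theta_1-\theta_2)/2)|$ together with $\int_0^\pi \log\sin\theta\,d\theta = -\pi\log 2$) shows that the logarithmic energy of the uniform probability measure on the unit circle vanishes, confirming the limit.

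The main obstacle I expect is justifying the passage to the limit in $V(\beta)$ in the presence of the logarithmic singularity of $G$ on the diagonal. The contribution from the smooth terms $\log\max(|w|,1) + \log\max(|w'|,1)$ converges by weak convergence of $\bar{p}_\beta$ to $\mu_1$ together with local boundedness of these terms. For the singular $-\log|w-w'|$ term, I would use the semigroup identity $p_{\beta^2/2} \ast p_{\beta^2/2} = p_{\beta^2}$ and the change of variables $u = w-w'$ to rewrite its contribution to $V(\beta)$ as $-\int_{\mathbb{C}} \log|u|\,(p_{\beta^2} \ast \rho)(u)\,du$, where $\rho$ denotes the distribution of $U - U'$ for independent uniform samples $U,U'$ on $\p B_1(0)$. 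Since $\log|u|$ is locally integrable in $\R^2$ and grows only logarithmically at infinity, and since $p_{\beta^2} \ast \rho \to \rho$ weakly with tails controlled uniformly in $\beta \in (0,1]$, a dominated convergence argument (in the same spirit as the Fourier computation of Lemma~\ref{avggoingto0}) yields convergence of this integral to $-\int \log|u|\,d\rho(u) = 0$. Combining the two contributions gives $V(\beta) \to 0$, and Chebyshev's inequality completes the proof.
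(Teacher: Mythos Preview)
Your proof is correct. It takes a genuinely different route from the paper's own argument, so a brief comparison is in order.

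The paper argues abstractly: since $h^{\mathbb{C}}$ a.s.\ lies locally in $H^{-s}$ for every $s>0$, the heat-kernel mollifications $h^{\mathbb{C}} \ast p_{\beta^2/2}$ converge to $h^{\mathbb{C}}$ in $H^{-s}$ as $\beta \to 0$, and since the unit-circle measure belongs to $H^{s}$ for small $s$, the circle average is a continuous linear functional on $H^{-s}$. Hence $(h^{\mathbb{C}} \ast p_{\beta^2/2})_1(0) \to (h^{\mathbb{C}})_1(0) = 0$ almost surely, which is stronger than the stated convergence in probability. This is a two-line proof once one accepts the Sobolev-space facts.

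Your approach instead computes the variance of the Gaussian random variable $(h^{\mathbb{C}} \ast p_{\beta^2/2})_1(0)$ explicitly and shows it tends to zero, which is exactly the strategy the paper uses for the companion Lemma~\ref{avggoingto0}. Your decomposition of $G$ into smooth and logarithmic pieces, the semigroup reduction to $p_{\beta^2} \ast \rho$, and the identification of the limiting logarithmic energy of the unit circle as zero are all correct. The one place where you are a bit informal is the domination step for $-\int \log|u|\,(p_{\beta^2}\ast\rho)(u)\,du$; a clean way to close it is to note that $\rho$ is rotationally invariant with a radial density comparable to $|u|^{-1}$ near the origin and supported in $\overline{B_2(0)}$, so $|\log|u||$ is $\rho$-integrable, and then apply either the mean-value property of $\log|\cdot|$ under Gaussian convolution or a straightforward uniform-integrability argument for $\beta \in (0,1]$. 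With that detail filled in, your argument is complete and entirely self-contained, whereas the paper trades self-containment for brevity by invoking the Sobolev regularity of the GFF.
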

\begin{proof}
We recall that $h$ is locally in $ H^{-s}$ for any $s>0$ by Proposition 2.7 in \cite{shef-gff}.
 Therefore $h \ast p_{\beta^2/2} \to h$ in $H^{-s}$ for any $s>0$ as $\beta \to 0.$ Therefore $(h \ast p_{\beta^2/2})_1(0) \to h_1(0)$ as $\beta \to 0.$ Since $h_1(0)=0,$ this completes the proof.
\end{proof}
We will also need the following lemma.
\begin{lemma}\label{pigeoninevent}
Let $R>0,$ and $\e, \delta, \theta>0.$ Suppose that $E$ is an event such that $\BB{P}(E)>0.$ Then there is a smooth, compactly supported function $\phi$ such that $\phi(z)=0$ if $\vert z\vert \geq R+\delta+\theta,$ and such that with positive probability,
\[
\text{$E$ occurs} \quad \text{and} \quad \sup_{x\in B_R(0)} \vert h_{\beta,\infty}(x)-\phi(x)\vert \leq \e.
\]
\end{lemma}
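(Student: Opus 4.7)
The plan is to combine the countable dense family of Lemma \ref{fouriernet} with a pigeonhole argument on the positive-probability event $E$, and then cut off the resulting function to make it compactly supported.

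First, I would invoke Lemma \ref{fouriernet} to produce a countable family $\mathcal{F}$ of smooth functions with compactly supported Fourier transforms such that every continuous function on $\overline{B_R(0)}$ lies within sup-distance $\e/2$ of some element of $\mathcal{F}$. Since $\beta>0$, the function $h_{\beta,\infty}$ is a.s.\ smooth (this is the point where we use that $h_{\beta,\infty}$ has been identified with the smooth mollification $h\ast p_{\beta^2/2}$ via Lemma \ref{samelaw}, with the additive constant fixed as in \eqref{eqn:wn-decomp}). In particular its restriction to $\overline{B_R(0)}$ is a.s.\ continuous, so on $E$ there exists some (random) $f\in\mathcal{F}$ with $\sup_{\overline{B_R(0)}}|h_{\beta,\infty}-f|\le \e/2$. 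Equivalently,
\[
E \;\subseteq\; \bigcup_{f\in\mathcal{F}} \Bigl(E \cap \bigl\{ \sup_{x\in B_R(0)}|h_{\beta,\infty}(x)-f(x)|\le \e/2\bigr\}\Bigr).
\]
Because $\mathcal{F}$ is countable and $\mathbb{P}(E)>0$, at least one term in this union has positive probability; pick any $f\in\mathcal{F}$ realising this.

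Next, to upgrade $f$ to a compactly supported function, I would choose a deterministic smooth cutoff $\chi:\mathbb{C}\to[0,1]$ that equals $1$ on $\overline{B_{R+\delta}(0)}$ and vanishes on $\mathbb{C}\setminus B_{R+\delta+\theta}(0)$, and set $\phi := \chi f$. Then $\phi$ is smooth, supported in $B_{R+\delta+\theta}(0)$, and coincides with $f$ on $\overline{B_R(0)}$. Hence on the positive-probability event selected above we have simultaneously $E$ and $\sup_{x\in B_R(0)}|h_{\beta,\infty}(x)-\phi(x)| = \sup_{x\in B_R(0)}|h_{\beta,\infty}(x)-f(x)|\le \e/2 \le \e$, as required.

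There is no real obstacle here; the only thing to be careful about is the a.s.\ continuity of $h_{\beta,\infty}$ (so that the covering by the countable family is genuinely a cover of $E$) and the fact that the cutoff does not alter $\phi$ on $B_R(0)$. The parameters $\delta$ and $\theta$ play no essential role beyond providing a buffer $B_{R+\delta+\theta}\setminus B_R$ in which the smooth cutoff can transition from $1$ to $0$, which is convenient for later applications but is not used in the argument itself.
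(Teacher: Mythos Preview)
Your proposal is correct and follows essentially the same pigeonhole argument as the paper. The only cosmetic difference is that the paper takes its countable $\e$-net directly among smooth functions compactly supported in $B_{R+\delta+\theta}(0)$, whereas you first invoke Lemma~\ref{fouriernet} (whose functions have compact Fourier support but not compact spatial support) and then multiply by a spatial cutoff; both routes yield the required $\phi$, and the detour through Lemma~\ref{fouriernet} is unnecessary here since only countable density in $C(\overline{B_R(0)})$ is used.
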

\begin{proof}
Let $F_R$ denote a family of functions compactly supported on $B_{R+\delta+\theta}(0)$ that form a countable $\e$-net in $ C(B_{R+\delta}(0))$ (with the $L^\infty$ norm). Then
\[
0<\BB{P}(E) \leq \sum_{\phi\in F_R} \BB{P} \left(E \mbox{ holds and } \sup_{x\in B_R(0)} \vert h_{\beta,\infty}(x)-\phi(x)\vert \leq \e \right),
\]
so there is some $\phi$ such that
\[
\BB{P} \left(E \mbox{ holds and } \sup_{x\in B_R(0)} \vert h_{\beta,\infty}(x)-\phi(x)\vert \leq \e \right) > 0.
\]
This completes the proof.
\end{proof}
Let $\norm{\cdot}_\infty$ denote the $L^\infty(B_R(0))$ norm for some large fixed $R.$ Let $E$ denote the event in Lemma \ref{smallcircavg}. Then by Lemma \ref{pigeoninevent}, there exists a smooth function $f_\beta$ such that
\begin{equation}\label{closefunctions}
\BB P\left(E \mbox{ holds and }\norm{h_{\beta,\infty}-f_\beta}_\infty<\frac{\e}{4}\right) > 0.
\end{equation}
Suppose from now on that $E$ holds. By Lemma \ref{invconv}, there exists a smooth function $g_\beta$ such that
\begin{equation}\label{gbeta}
p_{\beta^2/2} \ast g_\beta= f-f_\beta.
\end{equation}
By Lemma \ref{samelaw} we can couple $h$ and $h_{\beta,\infty}$ in such a way that
\[
h_{\beta,\infty}= h \ast p_{\beta^2/2} - (h \ast p_{\beta^2/2})_1(0)
\]
Then we have that
\begin{eqnarray*}
\norm{h_{\beta,\infty}-f}_{\infty} & \leq & \norm{h_{\beta,\infty}-(f-f_\beta)-f_\beta}_{\infty}\\
&\leq & \norm{h_{\beta,\infty}-p_{\beta^2/2}\ast g_\beta-f_\beta}_{\infty}\\
&\leq & \norm{(h-g_\beta)\ast p_{\beta^2/2}-f_\beta}_{\infty} + \norm{h_{\beta,\infty}-h \ast p_{\beta^2/2}}_{\infty}\\
&=& \norm{(h-g_\beta)\ast p_{\beta^2/2}-f_\beta}_{\infty} + \vert \left(h \ast p_{\beta^2/2} \right)_1(0)\vert,
\end{eqnarray*}
where in the last line we used Lemma \ref{samelaw}. Let $K = (g_\beta)_1(0),$ and define $\tilde{g}_\beta=g_\beta-\Psi_{K,\e},$ where $\Psi_{K,\e}$ is given as in Lemma \ref{Psi}. Then we have by the triangle inequality that
\begin{eqnarray}\label{longstring}
\norm{h_{\beta,\infty}-f}_{\infty} &\leq & \norm{(h-g_\beta)\ast p_{\beta^2/2}-f_\beta}_{\infty} + \vert\left(h \ast p_{\beta^2/2} \right)_1(0)\vert\nonumber\\
&\leq & \norm{(h-\tilde{g}_\beta)\ast p_{\beta^2/2}-f_\beta}_{\infty} + \vert\left(h \ast p_{\beta^2/2} \right)_1(0)\vert + \norm{(g_\beta-\tilde{g}_\beta)\ast p_{\beta^2/2}}_\infty\nonumber\\
&=& \norm{(h-\tilde{g}_\beta)\ast p_{\beta^2/2}-f_\beta}_{\infty} + \vert\left(h \ast p_{\beta^2/2} \right)_1(0)\vert + \norm{\Psi_{K,\e}\ast p_{\beta^2/2}}_\infty\nonumber\\
&\leq & \norm{(h-\tilde{g}_\beta)\ast p_{\beta^2/2}-f_\beta}_{\infty} + \vert\left(h \ast p_{\beta^2/2} \right)_1(0)\vert + \e
\end{eqnarray}
where in the last line we used Lemma \ref{Psi}. Now we claim that the event
\begin{equation}\label{claim2}
\{\norm{h \ast p_{\beta^2/2}-f_\beta}_{\infty} ,\vert\left((h+\tilde{g}_\beta)\ast p_{\beta^2/2}\right)_1(0)\vert \leq \e\}
\end{equation}
holds with positive probability. Suppose that \eqref{claim2} holds for now. Then we have by Lemma \ref{CMhigherdim} that it holds with positive probability that
\[
\{\norm{(h-\tilde{g}_\beta) \ast p_{\beta^2/2}-f_\beta}_{\infty}, \vert\left(h\ast p_{\beta^2/2}\right)_1(0)\vert \leq \e\}.
\]
Combining this with \eqref{longstring} we obtain that
\[
\norm{h_{\beta,\infty}-f}_{\infty}<3\e
\]
with positive probability.

It remains to show that \eqref{claim2} holds with positive probability. Note that we already have with positive probability that we simultaneously have
\[
\norm{h_{\beta,\infty}-f_\beta}_{\infty}<\frac{\e}{4}
\]
and
\[
\vert (h\ast p_{\beta^2/2})_1(0)\vert \leq \e.
\]
Then
\begin{eqnarray*}
\vert\left((h+\tilde{g}_\beta)\ast p_{\beta^2/2}\right)_1(0)\vert &\leq & \e + \vert\left(\tilde{g}_\beta\ast p_{\beta^2/2}\right)_1(0)\vert\\
&\leq & \e + \vert\left(g_\beta\ast p_{\beta^2/2}\right)_1(0)\vert + \vert\left(\Psi_{K,\e}\ast p_{\beta^2/2}\right)_1(0)\vert\\
&= & \e + \vert\left(f-f_\beta\right)_1(0)\vert + \vert\left(\Psi_{K,\e}\ast p_{\beta^2/2}\right)_1(0)\vert,
\end{eqnarray*}
where in the last line we used \eqref{gbeta}. Now recall that $f_1(0)=0$ by hypothesis. Hence by \eqref{closefunctions} we have
\[
\vert(f-f_\beta)_1(0) \vert = \vert(f_\beta)_1(0)\vert \leq \vert (h_\beta,\infty)_1(0)\vert+\pi \frac{\e}{4} \leq \e.
\]
This implies that
\[
\vert ((h+\tilde{g}_\beta)\ast p_{\beta^2/2})_1(0)\vert \leq 2\e+ \vert (\Psi_{K,\e}\ast p_{\beta^2/2})_1(0)\vert.
\]
Using Lemma \ref{smallcircavg} we obtain that \eqref{claim2} holds with positive probability.

\section{Increasing measure without changing metric} \label{pfofsphere}
Let $f:{\R^d}\to \R$ be a bounded continuous function. By Theorem \ref{particular}, with positive probability we have
\begin{equation}\label{eq100}
e^{\xi f}\cdot d_0(x,y)-\e <D_{h}(x,y)<e^{\xi f} \cdot d_0(x,y)+\e,
\end{equation}
for all $x,y \in [-R,R]^d$ and also
\begin{equation}\label{eq101}
\mu_{h}([-R,R]^d) < \e.
\end{equation}
\begin{defn}
We let $E^0$ denote the event that both \eqref{eq100} and \eqref{eq101} hold. Note that this is a positive probability event by Theorem \ref{particular}
\end{defn}
If we are able to change the measure without changing the metric very much, we can then use the results from Section \ref{flatproof} and prove Theorem \ref{measandmet} in the case of any Riemannian metric of the form $e^{\xi f}\cdot d_0$ together with any finite measure on $\R^d.$ Therefore the set of metric measure spaces that can be approximated by $(D_h,\mu_h)$ with positive probability and the closure of Riemannian metrics on $\R^d$ are the same.
Now by \cite{appendix}, this proves Theorem \ref{measandmet}.

\subsection{Proof of Theorem \ref{measandmet}}\label{colorfulsquares}

The idea will be to add bump functions to the log-correlated Gaussian field $h$ which will locally add mass at the centers of these bumps, but will not affect distances very much since geodesics can go around these bumps. To this end, suppose that $z_1, \ldots z_M$ is a set of points in $[-R,R]^d,$ and suppose that $C_1, \ldots, C_M$ are positive weights. Let $N,K$ be large positive integers, and let $r>0$ be small. For each $1 \leq i \leq M$ we will subdivide the square $z_i+[-r,r]^d$ into $2^d$ sets $X_{(j_1,\ldots j_d)}^{i,K}$ with $j_1, \ldots , j_d \in \{0,1\}^2$ defined as follows:
\[
X_{(j_1,\ldots ,j_d)}^{i,K}:= z_i + \frac{r}{2K}(j_1,\ldots ,j_d) + \left( \bigcup_{\ell_1=-K}^{K-1} \cdots \bigcup_{\ell_d=-K}^{K-1} \left(\frac{r\ell_1}{K}, \ldots \frac{r\ell_d}{K}\right)+\left[0,\frac{r}{2K}\right]^d \right)
\]
(see Figure \ref{phisupppic} for a picture). The idea of considering sets of this form is taken from the proof of Lemma 11.10 in \cite{bg-harmonic-ball}.

%%%%%%%%%%%%%%%%%%%%%%%%%%%%%%%%%%%%%%%%%%%%

\begin{figure}
\begin{center}
\includegraphics[width=0.4\textwidth]{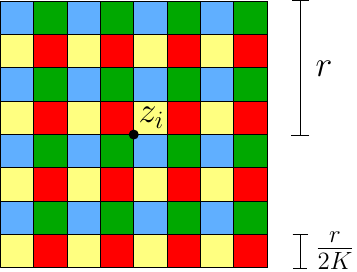}  
\caption{\label{phisupppic} The definition of $X_{(j_1,j_2)}^{i,K}.$ Different colors correspond to different $(j_1,j_2).$
}
\end{center}
\end{figure}
%%%%%%%%%%%%%%%%%%%%%%%%%%%%%%%%%%%%%%%%%%%%

Let $\zeta>0$ be a small constant. Since $D_{h}$ induces the Euclidean topology, the maximum $D_h$ diameter of the cubes $X_{(j_1, \ldots , j_d)}^{i,K}$ goes to $0$ as $K \to \infty.$ By this and the pigeonhole principle, we can take deterministic $\{(j_1^i,\ldots ,j_d^i)\}_{1\leq i \leq M}$ such that with positive probability, $E^0$ holds, and simultaneously
\begin{equation}\label{sqrcond1}
\frac{\max_{-K \leq \ell_1,\ldots ,\ell_d\leq K} \mathrm{Diam}_{D_{h}}\left(z_i+\frac{r}{2K}(j_1^i,\ldots ,j_d^i)+\left(\frac{r\ell_1}{K},\ldots , \frac{r\ell_d}{K}\right)+\left[-\frac{r}{K},\frac{3r}{2K}\right]^d\right)}{\mu_{h}(z_i+[-r,r]^d)} \leq \zeta
\end{equation}
for all $1\leq i\leq M,$ and
\begin{equation}\label{sqrcond2}
\mu_{h}\left(X_{(j_1^i,\ldots , j_d^i)}^{i,K}\right) \geq \frac{\mu_{h}(z_i+[-r,r]^d)}{2^d}.
\end{equation}
To simplify notation, we let
\begin{equation}\label{XiKdef}
X_i^K:= X_{(j_1^i,\ldots , j_d^i)}^{i,K}.
\end{equation}
Now we will ``fix" the $\mu_{h}$ masses around each point $z_i.$ We have the following lemma.
\begin{lemma}\label{fixmass}
Suppose that $r>0$ and $\bar{\eta}>0.$ Then there exist integers $n_1, \ldots n_M \in \N,$ a deterministic $0< \rho < \frac{r}{K},$ and a large deterministic constant $\bar{C}$ such that with positive probability, $E^0$ holds, \eqref{sqrcond1} holds, \eqref{sqrcond2} holds, we have
\begin{equation}\label{sqrcond3}
\frac{\mu_{h}\left(\{z: d_0(z, X_i^K) \leq \rho\} \setminus X_i^K\right)}{\mu_{h}(X_i^K)} \leq \zeta,
\end{equation} 
also
\[
n_i\bar{\eta} \leq \mu_{h}(X_i^K)\leq (n_i+1)\bar{\eta}
\]
for any $1 \leq i \leq M,$ and we finally have
\begin{equation}\label{bound}
\mathrm{Diam}_{D_{h}}([-r-\rho,r+\rho]^d \setminus X_i^K) \leq \bar{C}.
\end{equation}
\end{lemma}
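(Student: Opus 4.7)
The plan is to start from the positive-probability event $E := E^0 \cap \{\eqref{sqrcond1} \text{ and } \eqref{sqrcond2} \text{ hold}\}$ established in the paragraph preceding the lemma, and successively refine it to a still-positive-probability sub-event on which the three remaining requirements of the lemma also hold. The choice of $\bar C$, then $\rho$, and finally the integers $n_i$ will be made one after the other.

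First I would dispatch \eqref{bound}. Since $D_{h_L}$ induces the Euclidean topology, the random variable $\mathrm{Diam}_{D_{h_L}}([-r-\rho_0, r+\rho_0]^2)$ is almost surely finite for any fixed $\rho_0<r/K$, so I can select a deterministic constant $\bar C$ large enough that this diameter is $\leq \bar C$ on an event of probability at least $1-\mathbb P(E)/4$. Intersecting with $E$ gives a positive-probability event on which \eqref{bound} holds for every $i$, since $[-r-\rho, r+\rho]^2 \setminus X_i^K \subseteq [-r-\rho_0, r+\rho_0]^2$ for every $\rho \leq \rho_0$. Next, to handle \eqref{sqrcond3}, I use that $\partial X_i^K$ is a finite union of line segments, hence Lebesgue-null. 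A first-moment computation gives $\mathbb E[\mu_{h_L}(dz)] = f(z)\,dz$ for an explicit locally integrable density $f$ coming from the Weyl scaling and the circle-average variance formula, so Lebesgue-null sets have zero expected $\mu_{h_L}$-mass and therefore $\mu_{h_L}(\partial X_i^K)=0$ almost surely. By continuity of the measure from above,
\[
\mu_{h_L}\bigl(\{z: d_0(z, X_i^K) \leq \rho\} \setminus X_i^K\bigr) \xrightarrow[\rho\downarrow 0]{\text{a.s.}} 0,
\]
and since $\mu_{h_L}(X_i^K) > 0$ a.s.\ (because $\mu_{h_L}$ charges every non-empty open set and \eqref{sqrcond2} forces this on $E$), the ratio in \eqref{sqrcond3} tends to $0$ a.s.\ as $\rho \downarrow 0$. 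I can therefore fix a deterministic $\rho \in (0, r/K)$ small enough that \eqref{sqrcond3} holds simultaneously for all $i=1,\dots,M$ with probability close enough to one that the intersection with the previous event still has positive probability.

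Finally, I enforce the integer condition by a pigeonhole argument. On $E^0$ we have $\mu_{h_L}(\mathbb C) < \eta$, so each random quantity $\mu_{h_L}(X_i^K)$ takes values in $[0,\eta]$. Partitioning $[0,\eta]$ into the $L := \lceil \eta/\bar\eta \rceil$ intervals $[k\bar\eta,(k+1)\bar\eta)$, the random vector $(\mu_{h_L}(X_1^K),\dots,\mu_{h_L}(X_M^K))$ lies in one of at most $L^M$ product cells. The positive-probability event constructed in the previous two steps decomposes as a disjoint union over these finitely many cells, so at least one cell has positive probability; selecting the deterministic integers $(n_1,\dots,n_M)$ corresponding to such a cell completes the proof.

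The main obstacle is justifying that $\mu_{h_L}(\partial X_i^K)=0$ almost surely, because $\mu_{h_L}$ is singular with respect to Lebesgue measure and could a priori charge lower-dimensional sets; but the explicit first-moment identity $\mathbb E[\mu_{h_L}(A)] = \int_A f\,dz$ with $f$ locally integrable disposes of this for all $\gamma \in (0,2)$. The rest is a sequence of routine probabilistic reductions.
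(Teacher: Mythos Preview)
Your proof is correct and follows essentially the same route as the paper's: start from the positive-probability event $E^0\cap\{\eqref{sqrcond1}\}\cap\{\eqref{sqrcond2}\}$, pick $\bar C$ large so that \eqref{bound} holds with probability close to $1$, pick a deterministic $\rho$ small enough for \eqref{sqrcond3}, and pigeonhole over the finitely many cells $\prod_i [n_i\bar\eta,(n_i+1)\bar\eta)$ to extract the integers $n_i$. The paper's proof is terser but identical in structure.

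One small simplification: your first-moment argument for $\mu_{h_L}(\partial X_i^K)=0$ is not actually needed. From its definition, $X_i^K$ is a \emph{finite union of closed squares} and hence closed, so the decreasing family $\{z:d_0(z,X_i^K)\le\rho\}\setminus X_i^K$ intersects down to $\overline{X_i^K}\setminus X_i^K=\varnothing$, and continuity from above of the locally finite measure $\mu_{h_L}$ gives the convergence of the numerator in \eqref{sqrcond3} to $0$ directly. This also sidesteps the mild issue that for $\gamma\ge\sqrt2$ the density $f$ in $\mathbb E[\mu_{h_L}(dz)]=f(z)\,dz$ fails to be locally integrable at the marked points $0,1$; your conclusion is still fine there (the Lebesgue integral of any measurable function over a null set is $0$), but the detour is avoidable.
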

\begin{proof}
Note that the event that $E^0$, \eqref{sqrcond1}, \eqref{sqrcond2} hold simultaneously is an event of positive probability by construction. The result follows as a direct consequence of the pigeonhole principle. To select $\rho,$ we note that a non-deterministic $\rho$ satisfying \eqref{sqrcond3} exists with probability $1,$ and so there exists a deterministic $\rho$ satisfying \eqref{sqrcond3} simultaneously with the conditions $E^0$, \eqref{sqrcond1}, and \eqref{sqrcond2}. Finally, note that the probability of \eqref{bound} holds with probability going to $1$ as $\bar{C} \to \infty,$ so we can also assume this holds simultaneously. This completes the proof.
\end{proof}
Therefore for any $\bar{\eta} >0,$ there exist deterministic constants $\{a_i\}$ such that with positive probability, \eqref{sqrcond1}, \eqref{sqrcond2}, \eqref{sqrcond3} hold, and simultaneously
\begin{equation}\label{achoice}
\vert \mu_{h}(X_i^K) -a_i\vert\leq \bar{\eta}.
\end{equation}
We will define our bump functions now. For any $1 \leq i \leq M,$ we let $\vphi_{N,r,\rho}^{i}$ be defined as follows:
\begin{equation}\label{phidef}
\vphi_{N,r,\rho}^{i} (z)=
\left\{
\begin{matrix}
\log\left( \frac{C_i}{a_i} \right)& \mbox{ if } z\in X_i^K,\\
-N & \mbox{ if } d_0(x,X_i^K)\geq \rho \mbox{ and } z\in z_i + [-r-\rho,r+\rho]^d,\\
0 & \mbox{ if } d_0(z,z_i+[-r,r]^d) \geq 2\rho.
\end{matrix}
\right.
\end{equation}
We additionally impose that $-N \leq \vphi_{N,r,\rho}^{i} (z) \leq \log\left( \frac{C_i}{a_i} \right)$ if $d_0(x,X_i^K)\leq \rho,$ and that $-N \leq \vphi_{N,r,\rho}^{i} (z) \leq0$ if $d_0(z,z_i+[-r,r]^d) \leq \rho,$ but $z \notin z_i+[-r,r]^d$ (see Figure \ref{phisupppic2} for a picture of $\vphi_{N,r,\rho}^{i} (z)$ in dimension $2$).

%%%%%%%%%%%%%%%%%%%%%%%%%%%%%%%%%%%%%%%%%%%%

\begin{figure}
\begin{center}
\includegraphics[width=0.4\textwidth]{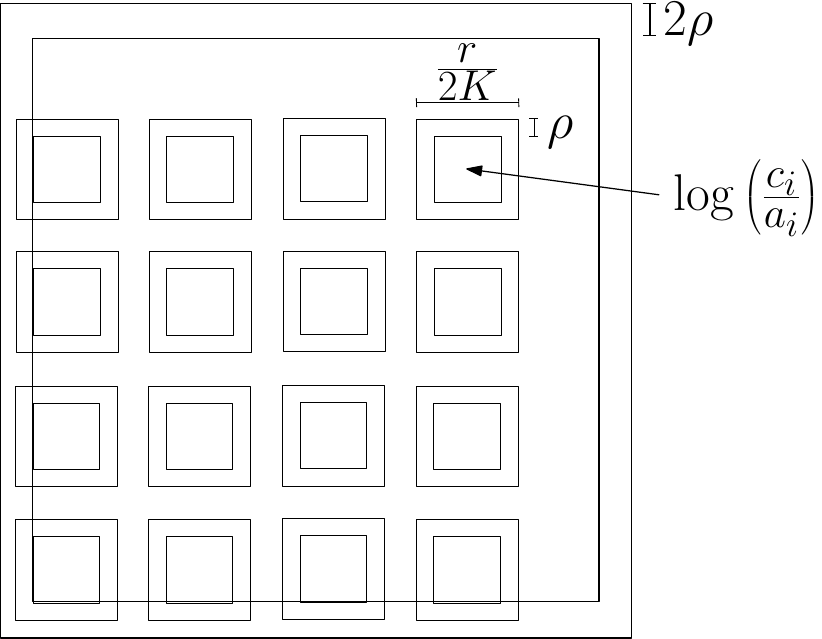}  
\caption{\label{phisupppic2} Support of $\vphi_{N,\e,\rho}^i$
}
\end{center}
\end{figure}
%%%%%%%%%%%%%%%%%%%%%%%%%%%%%%%%%%%%%%%%%%%%

Now we define
\begin{equation}\label{vphidef}
\vphi_N (z) := \sum_{i=1}^M \vphi_{N,r,\rho}^{i} (z).
\end{equation}

We will prove the following proposition.
\begin{prop} \label{adddeltas}
Let $g$ be a continuous function on ${\R^d},$ and let $z_1, \ldots z_M\in {\R^d}.$ Define $\vphi_N$ as in \eqref{vphidef}. Suppose that $\int_{\R^d} |g(z)|\,dz < \infty,$ and suppose that the event of Lemma \ref{fixmass} occurs. Then for any $\zeta>0$ and small enough $\e,r,$
\[
\left\vert \int_{\R^d} g(z) e^{\vphi_N (z)} d\mu_{h} -\sum_{i=1}^M C_ig(z_i) \right\vert < \zeta
\]
\end{prop}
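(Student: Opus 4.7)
The plan is to decompose $\int_{\mathbb{C}} g\, e^{\vphi_N}\, d\mu_{h_L}$ according to the regions on which the individual bumps $\vphi_{N,r,\rho}^{i}$ take distinct values, extract the main contribution at each $z_i$ from the pearl set $X_i^K$, and show that every other piece is small using the estimates of Lemma \ref{fixmass} together with continuity of $g$. Write $S_i$ for the support of $\vphi_{N,r,\rho}^{i}$; for $r$ small enough, the sets $S_i$ are pairwise disjoint and contained in arbitrarily small neighborhoods of the $z_i$, so
\begin{equation*}
\int_{\mathbb{C}} g\, e^{\vphi_N}\, d\mu_{h_L} \;=\; \int_{\mathbb{C}\setminus\bigcup_i S_i} g\, d\mu_{h_L} \;+\; \sum_{i=1}^{M} \int_{S_i} g\, e^{\vphi_{N,r,\rho}^{i}}\, d\mu_{h_L}.
\end{equation*}
Since $\vphi_N \equiv 0$ off $\bigcup_i S_i$, the exterior term is bounded by $(\sup_{\mathbb{C}}|g|)\,\mu_{h_L}(\mathbb{C}) \leq (\sup_{\mathbb{C}}|g|)\,\eta$, using the total mass bound of Theorem \ref{secondstep}.

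For each $i$, split $S_i$ into three pieces: the pearls $X_i^K$ on which $\vphi_{N,r,\rho}^{i}=\log(C_i/a_i)$; the $\rho$-collar $B_i := \{z:d_0(z,X_i^K)\le\rho\}\setminus X_i^K$ on which $-N\le \vphi_{N,r,\rho}^{i}\le \log(C_i/a_i)$; and the remainder $R_i := S_i\setminus(X_i^K\cup B_i)$ on which $\vphi_{N,r,\rho}^{i}\le 0$. Using continuity of $g$ on $z_i+[-r,r]^2$ together with \eqref{achoice},
\begin{equation*}
\frac{C_i}{a_i}\int_{X_i^K} g\, d\mu_{h_L} \;=\; C_i\, g(z_i)\,\frac{\mu_{h_L}(X_i^K)}{a_i} \;+\; O\!\left(\omega_g(r)\cdot\frac{C_i\,\mu_{h_L}(X_i^K)}{a_i}\right),
\end{equation*}
where $\omega_g(r):=\max_i \sup_{|w-z_i|\le 2r}|g(w)-g(z_i)|$. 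The $B_i$-contribution is bounded via \eqref{sqrcond3} by $(\sup_{S_i}|g|)\,(C_i/a_i)\,\zeta\,\mu_{h_L}(X_i^K)$, while the $R_i$-pieces sum to at most $(\sup_{\mathbb{C}}|g|)\,\mu_{h_L}(\mathbb{C})\le(\sup_{\mathbb{C}}|g|)\,\eta$, by disjointness of the $S_i$ and the bound $e^{\vphi_{N,r,\rho}^{i}}\le 1$ on $R_i$.

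The parameters are chosen in order: given $\zeta$, first pick $\eta$ small enough (and in particular $\bar{\eta}\le\eta$) so that the total-mass-type contributions together contribute at most $\zeta/3$; then pick $r$ small enough that the $S_i$ are disjoint and $\omega_g(r)\sum_i 2C_i<\zeta/3$. The main obstacle is that the remaining two error terms involve the ratio $\bar{\eta}/a_i$, which can only be made small if $a_i$ is bounded away from $0$ relative to $\bar{\eta}$. To handle this, strengthen the event $E^{0}$ to its positive-probability sub-event on which $\mu_{h_L}(X_i^K)\ge\delta_0$ for every $i$, for some deterministic $\delta_0>0$; this sub-event has positive probability because $\mu_{h_L}$ assigns positive mass to every non-empty open set almost surely. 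On this sub-event $a_i\ge\delta_0/2$ provided $\bar{\eta}\le\delta_0/2$, hence $\mu_{h_L}(X_i^K)/a_i = 1+O(\bar{\eta}/\delta_0)$. Shrinking $\bar{\eta}$ further yields $\sum_i C_i|g(z_i)|\,\bar{\eta}/a_i<\zeta/3$ and pushes the $B_i$-collar error below $\zeta/3$; summing the four pieces gives the desired bound.
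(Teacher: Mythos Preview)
Your proof is correct and follows essentially the same decomposition as the paper (the pearls $X_i^K$, their $\rho$-collar, and the complement), the only cosmetic difference being that the paper first reduces to $M=1$ via a partition of unity while you handle all $M$ at once using disjointness of the supports $S_i$. You are actually more careful than the paper about controlling the ratio $\mu_{h_L}(X_i^K)/a_i$: the paper's claim that $\int_{X^K} g\,(C/a)\,d\mu_{h_L}-Cg(0)=o_r(1)$ glosses over why $\bar\eta/a$ is small, whereas your $\delta_0$ sub-event makes this explicit; just note that since $\delta_0$ depends on $r$ (through $X_i^K$), the clean order of choices is $\eta$, then $r$, then $\delta_0$, and only then $\bar\eta\le\delta_0/2$, rather than fixing $\bar\eta$ before $r$ and shrinking it afterward.
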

\begin{proof} 
First we note that we can reduce to the case that $M=1.$ Indeed, let $\{\chi_i\}_{i=1}^M$ be smooth functions such that
\[
\chi_i(z)=
\left\{
\begin{matrix}
1 & \mbox{ if } z \in z_i+[-r-2\rho,r+2\rho]^d,\\
0 & \mbox{ if } z \notin z_i+[-r-3\rho,r+3\rho]^d,
\end{matrix}
\right.
\]
and $0\leq \chi_i(z)\leq 1.$ Then by the triangle inequality we have
\begin{eqnarray*}
&&\left\vert \int_{\R^d} g(z) e^{\vphi_N (z)} d\mu_{h} -\sum_{i=1}^M C_ig(z_i)e^{\gamma f(z_i)} \right\vert \\
&&\leq \sum_{i=1}^M \left\vert \int_{\R^d} \chi_i(z)g(z) e^{\vphi_{N,r,\rho}^{i} (z)} d\mu_h -C_i\chi_i(z_i)g(z_i) \right\vert\\
&& + \left\vert \int_{{\R^d}} g(z)\left( e^{\vphi_N (z)} - \sum_{i=1}^M e^{\vphi_{N,r,\rho}^{i} (z)}\right)d\mu_{h}\right\vert\\
&& \leq \sum_{i=1}^M \left\vert \int_{\R^d} \chi_i(z)g(z) e^{\vphi_{N,r,\rho}^{i} (z)} d\mu_h -C_ig(z_i) \right\vert\\
&& + \e\norm{g}_\infty \max_{1\leq i \leq M} \vert C_i\vert,
\end{eqnarray*}
where in the last line we used the fact that $\mu_{h}({\R^d}) \leq \e$ by \eqref{eq101}. Therefore shrinking $\zeta$ if necessary, it suffices to prove the proposition for the case when $M=1.$ We then have a single point $z_1$ with its weight $C.$ Without loss of generality, suppose that $z_1=0.$ Since we have a single point mass, we will drop the subscript on $X_i^K.$ We split ${\R^d}$ into three regions,
\[
{\R^d}= X^K \cup \left((X^K + B_\rho(0))\setminus X^K\right) \cup \left( {\R^d} \setminus (X^K + B_\rho(0))\right).
\]
Splitting ${\R^d}$ into these domains we obtain
\begin{eqnarray*}
&&\left\vert \int_{\R^d} g(z) e^{\gamma f(z)+\vphi_N (z)} d\mu_h - Cg(0)e^{\gamma f(0)} \right\vert \\
&& \leq \left\vert \int_{X^K} g(z) e^{\vphi_N (z)} d\mu_h - Cg(0) \right\vert\\
&& + \left\vert \int_{\left((X^K + B_\rho(0))\setminus X^K\right)} g(z) e^{\vphi_N (z)} d\mu_h \right\vert\\
&&+ \left\vert \int_{\left( {\R^d} \setminus (X^K + B_\rho(0))\right)} g(z) e^{\vphi_N (z)} d\mu_h \right\vert.
\end{eqnarray*}
Note that
\[
\left\vert \int_{\left((X^K + B_\rho(0))\setminus X^K\right)} g(z) e^{\vphi_N (z)} d\mu_h \right\vert \leq \mu_{h}{\left((X^K + B_\rho(0))\setminus X^K\right)} \norm{g}_\infty \frac{C}{a},
\]
where we used the fact that $\vphi_N(z) \leq \frac{C}{a}$ if $z \in \left((X^K + B_\rho(0))\setminus X^K\right).$ Using \eqref{sqrcond3}, we obtain that
\begin{equation}\label{ineq22}
\left\vert \int_{\left((X^K + B_\rho(0))\setminus X^K\right)} g(z) e^{\vphi_N (z)} d\mu_h \right\vert \leq \norm{g}_\infty C \zeta.
\end{equation}
Similarly, we have with probability $1$ that
\begin{equation}\label{ineq23}
\left\vert \int_{\left( {\R^d} \setminus (X^K + B_\rho(0))\right)} g(z) e^{\vphi_N (z)} d\mu_h \right\vert \leq \norm{g}_\infty \mu_{h}({\R^d} \setminus (X^K + B_\rho(0))) \leq \norm{g}_\infty \e.
\end{equation}

\begin{eqnarray}\label{plugged}
\left\vert \int_{\R^d} g(z) e^{\vphi_N (z)} d\mu_h - Cg(0) \right\vert \leq \left\vert \int_{X^K} g(z)\frac{C}{a} - Cg(0) \right\vert  + \norm{g}_\infty  C \zeta + \norm{g}_\infty \e
\end{eqnarray} 
Note that
\begin{equation}\label{ineq21}
\int_{X^K} g(z)\frac{C}{a} - Cg(0) = o_r(1)
\end{equation}
by \eqref{achoice} and the fact that $g$ is continuous. Therefore taking $\e$ to be small enough we obtain
\begin{equation}
\left\vert \int_{\R^d} g(z) e^{\vphi_N (z)} d\mu_h - Cg(0) \right\vert \leq \zeta.
\end{equation}

\end{proof}

Now we claim the metric doesn't change much when adding $\vphi_N(z)$.

\begin{prop}\label{samemetric}
Let $\theta>0.$ For large enough $N,$ and small enough $r,$ we have that with positive probability that the conclusion of Lemma \ref{fixmass} holds, and simultaneously for all $x,y \in {\R^d}$
\[
\sup_{x,y\in{\R^d}} \vert D_{h}(x,y)-e^{\vphi_N}\cdot D_{h}(x,y)\vert \leq \theta.
\]
\end{prop}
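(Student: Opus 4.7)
My plan is to decompose $\vphi_N = \sum_{i=1}^M \vphi_N^i$ and exploit the fact that the supports of the summands $\vphi_N^i$ are pairwise disjoint once $r$ is small enough that the enlarged boxes $z_i + [-r-2\rho, r+2\rho]^2$ are. Outside these boxes we have $\vphi_N \equiv 0$, so the two metrics agree on paths avoiding them; the whole question reduces to controlling what happens when a path passes through a single enlarged box. Recall from \eqref{phidef} that on each such box, $\vphi_N^i$ equals $\log(C_i/a_i)$ on $X_i^K$, equals $-N$ on the \emph{deep moat} $M_i^{\mathrm{deep}} := \{z \in z_i + [-r-\rho, r+\rho]^2 : d_0(z, X_i^K) \geq \rho\}$, vanishes outside the outer box $z_i + [-r-2\rho, r+2\rho]^2$, and interpolates in the transition layers in between.

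For the upper bound $e^{\vphi_N} \cdot D_{h_L}(x,y) \leq D_{h_L}(x,y) + \theta/2$, I would take a $D_{h_L}$-geodesic $P$ from $x$ to $y$ and, for each $i$ such that $P$ meets $z_i + [-r-\rho, r+\rho]^2$, replace the segment between the first entry and last exit times from the enlarged box by a detour that runs from entry point to exit point entirely through the deep moat. Such a detour exists provided $\rho$ was chosen small enough that $M_i^{\mathrm{deep}}$ wraps around $X_i^K$ and touches every side of $z_i + [-r-\rho, r+\rho]^2$, and by \eqref{bound} its $D_{h_L}$-length is at most $\bar C$. Since $\vphi_N \equiv -N$ on $M_i^{\mathrm{deep}}$, the $e^{\vphi_N}\cdot D_{h_L}$-length of each detour is at most $e^{-N} \bar C$. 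On the unchanged portion of the modified path $\vphi_N \equiv 0$ so the two lengths agree, and summing yields an $e^{\vphi_N}\cdot D_{h_L}$-length of at most $D_{h_L}(x,y) + M e^{-N} \bar C \leq D_{h_L}(x,y) + \theta/2$ once $N$ is large.

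For the lower bound $D_{h_L}(x,y) \leq e^{\vphi_N} \cdot D_{h_L}(x,y) + \theta/2$, I would take a near-minimizing path $P'$ for the Weyl-scaled metric and, whenever it enters $z_i + [-r-\rho, r+\rho]^2$, replace the corresponding portion between first entry and last exit by a $D_{h_L}$-geodesic inside $z_i + [-r-\rho, r+\rho]^2 \setminus X_i^K$ of $D_{h_L}$-length at most $\bar C$, which exists by \eqref{bound}. Outside the outer boxes $\vphi_N \equiv 0$, so $D_{h_L}$- and $e^{\vphi_N}\cdot D_{h_L}$-lengths of $P'$ agree there; hence the total $D_{h_L}$-length of the modified path is at most $e^{\vphi_N}\cdot D_{h_L}(x,y) + M \bar C + o(1)$. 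Because $D_{h_L}$ a.s.\ induces the Euclidean topology, by shrinking $r$ at the outset we can take $\bar C$ in \eqref{bound} as small as we wish while still applying Lemma \ref{fixmass} to obtain the conclusion with positive probability; choosing $r$ so that $M \bar C \leq \theta/2$ then closes the estimate.

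The chief technical difficulty will be the geometric verification that the claimed detours exist and realize the claimed length bounds, which forces a careful choice of $\rho$ relative to the inner scale $r/K$ of $X_i^K$ so that $M_i^{\mathrm{deep}}$ is path-connected and accessible from every side of the enclosing box; paths that enter and exit a given box multiple times are reduced to a single entry--exit pair using the bounded $D_{h_L}$-diameter of the moat. The positive probability is inherited from Lemma \ref{fixmass} by choosing the constants in the right order: first $\theta$, then $r$ small enough to force $\bar C$ small, then $N$ large enough to make $e^{-N} \bar C$ negligible, with $\rho$ and $\bar\eta$ adjusted consistently at each step so that the event of Lemma \ref{fixmass} remains of positive probability.
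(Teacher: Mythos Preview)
Your approach is essentially the paper's: both excise the portions of a geodesic (for one metric) that pass through the enlarged boxes and replace them by short detours, using the deep moat for the Weyl-scaled direction and the smallness of the $D_{h_L}$-diameter of the boxes (via small $r$) for the other direction; the paper packages this as Lemma~\ref{smallsqrdiam}, asserting that each box $z_i+[-r-\rho,r+\rho]^2$ has small diameter in \emph{both} metrics, with its Lemma~\ref{thing} playing the role of your escape-to-the-moat step. One small point to tidy: since the deep moat lies inside the \emph{inner} box while you take entry/exit at the \emph{outer} box boundary, your detour cannot be ``entirely through the deep moat'' and the bound~\eqref{bound} you cite is for the inner box; this is harmless because $\vphi_N\le 0$ on the outer transition layer, so either take entry/exit at the inner boundary (using $\vphi_N\le 0$ rather than $\equiv 0$ on the unchanged portion) or, for the lower bound, simply use that the $D_{h_L}$-diameter of the full outer box is small once $r$ is small.
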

We will prove Proposition \ref{samemetric} in Subsection \ref{metric}. We will prove now Theorem \ref{measandmet} assuming Proposition \ref{samemetric}.

\begin{proof}[Proof of Theorem \ref{flat} assuming Proposition \ref{samemetric}]
Suppose that $e^{\xi f} \cdot d_S$ is a Riemannian metric in its isothermal form, and let $\mathfrak{m}$ be a probability measure on ${\R^d}.$ We will show Theorem \ref{measandmet} for this metric and measure. Let $\vphi=\vphi_N$ be as in \eqref{vphidef}. We now introduce a finite set of functions to which we will apply Proposition \ref{samemetric}. Let $n$ be a large integer chosen so that
\begin{equation}\label{largen}
n\geq \frac{2\sqrt{d}R}{\e}.
\end{equation}
Let $T_{(j_1,\ldots , j_d)}$ be defined by
\[
T_{(j_1,\ldots , j_d)}:=\left[\frac{R}{n}(-n+j_1),\frac{R}{n}(-n+j_1+1)\right]\times \cdots \times \left[\frac{R}{n}(-n+j_d),\frac{R}{n}(-n+j_d+1)\right].
\]

For each finite subset $S \subseteq [0,2n-1]^d,$ let $g_S$ be a smooth function such that
\[
g_S(z) =
\left\{
\begin{matrix}
1 & \mbox{ if } z \in \cup_{(j_1, \ldots , j_d)\in S} T_{(j_1,\ldots , j_d)},\\
0 & \mbox{ if } z \notin \cup_{(j_1, \ldots , j_d)\in S} \{x : d_S(x,T_{(j_1,\ldots , j_d)}) \leq \frac{\e}{2}\}.
\end{matrix}
\right.
\]
For any $\theta>0,$ there exist points $\{z_i\}_{i=1}^n \subset {\R^d}$ and weights $\{C_i\}_{i=1}^n \subset (0,\infty)$ such that for any $(j_1, \ldots , j_d)\in S$ we have
\begin{equation}\label{measapprox}
\left\vert \int_{\R^d} g_S d\mathfrak{m}-\sum_{i=1}^M C_i g_S(z_i) \right\vert < \frac{\theta}{3} \left\vert \int_{\R^d} g_S d\mathfrak{m} \right\vert.
\end{equation}
Then applying Proposition \ref{adddeltas} to $g_S$ for every $S \subseteq [0,2n-1]^d$ taking $r$ to be small enough we obtain that with positive probability for any $S \subseteq F(\mathcal{T}),$ we have
\begin{equation}\label{ineq19}
\left\vert \int_{\R^d} g_S(z) e^{\vphi_N (z)} d\mu_h -\sum_{i=1}^M C_ig_S(z_i) \right\vert < \zeta,
\end{equation}
\[
e^{\xi f} \cdot d_0(x,y)-\e^{} \leq D_{h}(x,y)\leq e^{\xi f} \cdot d_0(x,y)+\e^{} ,\quad \forall x,y\in\mathbb{C}
\]
and simultaneously
\[
\mu_{h}({\R^d}) \leq \e.
\]
Therefore using \eqref{measapprox} we obtain
\begin{eqnarray*}
\left\vert \int_{\R^d} g_S(z) e^{\vphi_N(z)}d\mu_{h}-\int_{\R^d} g_Sd\mathfrak{m} \right\vert  
&\leq& \left\vert \int_{\R^d} g_S(z) d\mathfrak{m}-\sum_{i=1}^M C_i g_S(z_i) \right\vert\\
&+& \left\vert \int_{\R^d} g_S(z) e^{\vphi_N(z)}d\mu_{h} -\sum_{i=1}^M C_i g_S(z_i) \right\vert\\
&\leq & \frac{\theta}{3} \left\vert \int_{\R^d} g_S d\mathfrak{m} \right\vert + \zeta,
\end{eqnarray*}
where in the last inequality we used \eqref{measapprox} and \eqref{ineq19}.
Again, for large enough $N$ and small enough $r,$ applying Proposition \ref{samemetric} we obtain that for all $x,y\in{\R^d}$ we have
\begin{equation}\label{modded}
D_{h}(x,y)- \frac{\theta}{2} \leq e^{\frac{\xi}{\gamma}\vphi_N}\cdot D_{h}(x,y) \leq D_{h}(x,y) + \frac{\theta}{2}.
\end{equation}
Recall that we assumed that $z_i \notin \p B_1(0).$ Thus using the Cameron-Martin property applied to $\frac{1}{\gamma}\vphi_N$ we obtain that for any $\eta>0,$ we have with positive probability that
\begin{equation}\label{closeclose}
\sup_{x,y \in {\R^d}}\vert D_{h}(x,y) - e^{\xi f} \cdot d_0(x,y)\vert \leq \frac{\theta}{2},
\end{equation}
\begin{equation}
\left\vert \int_{\R^d} g_S(z) e^{\vphi_N(z)}d\mu_{h}-\int_{\R^d} g_Sd\mathfrak{m} \right\vert \leq \theta \left\vert \int_{\R^d} g_S d\mathfrak{m}\right\vert,
\end{equation}
and simultaneously
\[
\mu_{h}([-R,R]^d) \leq \e.
\]
To pass to a statement about measures of sets, suppose that $A$ is a measurable set, and let $S \subseteq [0,2n-1]^d$ such that $\norm{g_S-1_A}_1\leq \theta,$ and $\mathrm{Supp}g_S \subseteq A_\e.$ Then
\[
\left\vert \vert A\vert- \int_{\R^d} g_S\right\vert \leq \frac{\theta}{3}
\]
which implies that
\begin{equation}\label{unitmeas}
\left\vert \mathfrak{m}(A) -\int_{\R^d} 1_A(z) e^{\xi f(z)+\vphi(z)}d\mu_{h}\right\vert \leq \theta.
\end{equation}
This completes the proof.
\end{proof}

\subsection{Proof of Proposition \ref{samemetric} \label{metric}}
\begin{comment}
Let $\eta\gg\e>0.$ Suppose that $\e$ is chosen so that  
\[
\sup_{z_1,z_2\in B_\e(0)} \vert f(z_1)-f(z_2)\vert <\theta
\]
for some small $\theta>0.$
\end{comment}
The first ingredient we need is the following lemma.

\begin{lemma} \label{smallsqrdiam}
Assume that the events in the conclusion of Lemma \ref{fixmass} hold. Let $\bar{\eta} > 0.$ Then for large enough $N,$ we have for all $1 \leq i \leq M,$ we have that
\[
\mathrm{Diam}_{e^{\frac{\xi}{\gamma}\vphi_N} \cdot D_{h}}(z_i+[-r-\rho,r+\rho]^d)\leq \bar{\eta}, \quad \mathrm{Diam}_{D_{h}}(z_i+[-r-\rho,r+\rho]^d) \leq \bar{\eta}.
\]
\end{lemma}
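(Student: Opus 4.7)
The plan is to prove the two diameter bounds by exploiting the structure of $\vphi_N$ together with the assumed events from Lemma \ref{fixmass}. The bound on $\mathrm{Diam}_{D_{h_L}}(z_i + [-r-\rho, r+\rho]^2)$ can be arranged by augmenting the positive-probability event of Lemma \ref{fixmass} with the additional event $\{\mathrm{Diam}_{D_{h_L}}(z_i + [-r-\rho, r+\rho]^2) \leq \bar{\eta} \text{ for all } i\}$; since $D_{h_L}$ induces the Euclidean topology on $\mathbb{C}$, this event has probability close to $1$ provided $r$ and $\rho$ are small enough, so the intersection with the Lemma \ref{fixmass} event is still of positive probability.

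The substantive part is the bound on the modified metric $e^{\frac{\xi}{\gamma}\vphi_N} \cdot D_{h_L}$. The key idea is that, for any two points $x, y \in z_i + [-r-\rho, r+\rho]^2$, one can construct a connecting path that spends almost all of its length in the ``gap region'' $\Omega_{\mathrm{gap}} := \{z \in z_i + [-r-\rho, r+\rho]^2 : d_0(z, X_i^K) \geq \rho\}$, on which $\vphi_N \equiv -N$. Any segment inside $\Omega_{\mathrm{gap}}$ acquires a multiplicative factor of $e^{-\xi N/\gamma}$ in the modified length, and this factor becomes negligible once $N$ is large.

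Concretely, I would first invoke \eqref{sqrcond1}: each enlarged square $\tilde{S}_{\ell_1, \ell_2} := z_i + \tfrac{r}{2K}(j_1^i, j_2^i) + (\tfrac{r\ell_1}{K}, \tfrac{r\ell_2}{K}) + [-r/K, 3r/(2K)]^2$ has $D_{h_L}$-diameter at most $\zeta \mu_{h_L}(z_i + [-r,r]^2) \leq \zeta \eta$, and (taking $\rho$ small compared to $r/K$) each such enlarged square meets $\Omega_{\mathrm{gap}}$. Consequently one obtains short $D_{h_L}$-paths from $x$ and $y$ to points $x^*, y^* \in \Omega_{\mathrm{gap}}$ of $D_{h_L}$-length at most $\zeta \eta$ each, contributing a modified length of at most $(C_i/a_i)^{\xi/\gamma} \zeta \eta$ per endpoint segment (using that $\vphi_N \leq \log(C_i/a_i)$ on $z_i + [-r,r]^2$ and $\vphi_N \leq 0$ otherwise). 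Next, I would connect $x^*$ to $y^*$ within $\Omega_{\mathrm{gap}}$ by chaining segments through the strips lying between consecutive $X_i^K$-cells; each such strip is contained in a union of two overlapping enlarged squares and hence has $D_{h_L}$-diameter at most $2\zeta\eta$, so chaining $O(K)$ of them produces a path inside $\Omega_{\mathrm{gap}}$ of $D_{h_L}$-length $O(K \zeta \eta)$, whose modified length is bounded by $e^{-\xi N/\gamma} \cdot O(K \zeta \eta)$.

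Summing these contributions gives $e^{\frac{\xi}{\gamma}\vphi_N} \cdot D_{h_L}(x,y) \leq 2 (C_i/a_i)^{\xi/\gamma} \zeta \eta + e^{-\xi N/\gamma} \cdot O(K \zeta \eta)$. Provided the parameters $\zeta$ and $\eta$ were taken small enough in Lemma \ref{fixmass} so that the first term is at most $\bar{\eta}/2$ (which is compatible with the $a_i$ being bounded below via the decomposition used in Theorem \ref{measandmet}, since $1 - \xi/\gamma > 0$), taking $N$ sufficiently large makes the second term at most $\bar{\eta}/2$, yielding the claimed bound. The main obstacle will be ensuring that $\Omega_{\mathrm{gap}}$ is genuinely path-connected and that the chaining construction does produce a path whose $D_{h_L}$-length is controlled; these points are handled by the explicit grid structure of $X_i^K$ together with the overlap bound \eqref{sqrcond1}, but they require careful geometric bookkeeping to verify uniformly in $i$.
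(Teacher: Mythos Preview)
Your approach is correct and follows the same two–step skeleton as the paper: first escape from the $\rho$–neighbourhood of $X_i^K$ to the ``gap'' region at bounded modified cost, then connect inside the gap where $\vphi_N\equiv -N$ so that the contribution is $e^{-\xi N/\gamma}$ times a fixed $D_{h_L}$–length and hence negligible for large $N$. The second diameter bound $\mathrm{Diam}_{D_{h_L}}(z_i+[-r-\rho,r+\rho]^2)\le\bar\eta$ is handled in the paper exactly as you suggest, by folding it into the positive–probability event (it follows from $D_{h_L}$ inducing the Euclidean topology once $r$ is small).

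The differences are in implementation rather than idea. For the escape step, the paper packages the bound as a separate Lemma~\ref{thing} using a short Euclidean segment out of the cell; your use of \eqref{sqrcond1} is arguably the cleaner route, and your parenthetical about $1-\xi/\gamma>0$ is exactly what is needed to absorb the factor $(C_i/a_i)^{\xi/\gamma}$, since the escape path in fact has $D_{h_L}$–length at most $\zeta\,\mu_{h_L}(z_i+[-r,r]^2)$ rather than $\zeta\eta$, and combining this with $a_i\gtrsim\mu_{h_L}(z_i+[-r,r]^2)$ yields a bound of the form $(4C_i)^{\xi/\gamma}\zeta\,\mu_{h_L}(z_i+[-r,r]^2)^{1-\xi/\gamma}$. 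For the connection step, the paper simply invokes the deterministic constant $\bar C$ from \eqref{bound} (the internal $D_{h_L}$–diameter of the complement of $X_i^K$) and takes $N$ large so that $e^{-\xi N/\gamma}\bar C\le\bar\eta$; this avoids your strip–chaining and the associated $O(K)$ bookkeeping. Your chaining works too, but note it requires $\rho$ small enough (say $\rho<r/(4K)$) so that $\Omega_{\mathrm{gap}}$ remains path–connected through the grid strips; Lemma~\ref{fixmass} allows this choice, but it should be made explicit.
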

For now we will assume the validity of this lemma and prove Proposition \ref{samemetric}.

Suppose $x,y \in {\R^d},$ and let $P$ be a $e^{\frac{\xi}{\gamma} \vphi_N}\cdot D_{h}$-distance minimizing geodesic between $x,y.$ We claim that
\[
e^{\frac{\xi}{\gamma} \vphi_N}\cdot D_{h}(x,y) \leq \ell_{e^{\frac{\xi}{\gamma} \vphi_N}\cdot D_{h}} (P \setminus \bigcup_{i=1}^M (z_i+[-r-2\rho , r+ 2\rho]^d)) + M \sup_{1\leq i\leq M}\mathrm{Diam}_{e^{\frac{\xi}{\gamma} \vphi_N}\cdot D_{h}}(z_i+[-r-2\rho,r+2\rho]^d).
\]

Indeed, there exist intervals $[t_i,s_i]_{i=1}^M$ such that $P(t_0)=x,$ $P(s_M)=y,$ $P_{[t_\ell,s_\ell]} \cap \left(\bigcup_{i=1}^M X_i^K\right) = \varnothing,$ and additionally, for every $1 \leq i \leq M,$ $P(s_i),P(t_{i+1}) \in X_j^K$ for some $1 \leq j\leq M.$
Then 
\begin{eqnarray*}
e^{\frac{\xi}{\gamma} \vphi_N}\cdot D_{h}(x,y) &\leq& \ell_{e^{\frac{\xi}{\gamma} \vphi_N}\cdot D_{h}} (P \setminus \bigcup_{i=1}^M (z_i+[-r-2\rho , r+ 2\rho]^d)) + \sum_{i=1}^M \ell_{e^{\frac{\xi}{\gamma} \vphi_N}\cdot D_{h}}(P\vert_{[s_i,t_{i+1}]})\\
&\leq &\ell_{e^{\frac{\xi}{\gamma} \vphi_N}\cdot D_{h}} (P \setminus \bigcup_{i=1}^M (z_i+[-r-2\rho , r+ 2\rho]^d))\\
&+& M \sup_{1\leq i\leq M}\mathrm{Diam}_{e^{\frac{\xi}{\gamma} \vphi_N}\cdot D_{h}}(z_i+[-r-2\rho,r+2\rho]^d)
\end{eqnarray*}
This proves the claim. Now note that
\begin{eqnarray*}
e^{\frac{\xi}{\gamma} \vphi_N}\cdot D_{h}(x,y)
&\leq &\ell_{e^{\frac{\xi}{\gamma} \vphi_N}\cdot D_{h}} (P \setminus \bigcup_{i=1}^M (z_i+[-r-2\rho , r+ 2\rho]^d))\\
&+& M \sup_{1\leq i\leq M}\mathrm{Diam}_{e^{\frac{\xi}{\gamma} \vphi_N}\cdot D_{h}}(z_i+[-r-2\rho,r+2\rho]^d)\\
&\leq & \ell_{D_{h}} (P)\\
&+&M \sup_{1\leq i\leq M}\mathrm{Diam}_{e^{\frac{\xi}{\gamma} \vphi_N}\cdot D_{h}}(z_i+[-r-2\rho,r+2\rho]^d).
\end{eqnarray*}
Now applying Lemma \ref{smallsqrdiam} we conclude the upper bound for $D_{e^{\frac{\xi}{\gamma} \vphi_N}\cdot D_{h}}(x,y).$ The lower bound can be obtained in the exact same way interchanging the metrics $D_{e^{\frac{\xi}{\gamma} \vphi_N}\cdot D_{h}}$ and $D_{h}.$ This completes the proof of Proposition \ref{samemetric}.

For the proof of Lemma \ref{smallsqrdiam}, we will need the following lemma.
\begin{lemma}\label{thing}
Assume that the events in the conclusion of Lemma \ref{fixmass} hold. Suppose that $x \in X_i^K+[-\rho,\rho]^d$ for some $1 \leq i \leq M.$ Let $\bar{\e}>0.$ Then for large enough $K$ depending on $r,$ we have
\[
e^{\frac{\xi}{\gamma}\vphi_N}\cdot D_{h}(x, \p (X_i^K+[-\rho,\rho]^d)) \leq \bar{\e}.
\]
\end{lemma}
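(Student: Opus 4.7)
The plan is to exhibit a short path from $x$ to a point of $\partial(X_i^K + [-\rho, \rho]^2)$ lying inside a slightly enlarged reference square, and to bound its weighted length using the $D_{h_L}$-diameter control \eqref{sqrcond1} together with a global pointwise upper bound on $\vphi_N$.

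First, I would require that the $\rho$ supplied by Lemma \ref{fixmass} additionally satisfies $\rho < r/(4K)$; this is permissible for $K$ large enough depending on $r$, because the conclusions \eqref{sqrcond3} and \eqref{bound} are monotone nondecreasing in $\rho$ and hence hold for all sufficiently small $\rho$. Under this condition, the $\rho$-neighborhoods (in $\ell^\infty$) of distinct small squares of $X_i^K$ are pairwise disjoint, so $X_i^K + [-\rho,\rho]^2$ is the disjoint union, over small squares $S\subset X_i^K$, of the enlarged squares $\bar S := S + [-\rho, \rho]^2$; in particular $\partial \bar S \subseteq \partial(X_i^K + [-\rho, \rho]^2)$. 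The point $x$ lies in a unique such $\bar S$, so it suffices to reach $\partial \bar S$.

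Next, let $\hat S$ denote the slightly larger square appearing in \eqref{sqrcond1} that contains $S$. Since $\hat S$ extends a distance $r/K > \rho$ beyond $S$ on every side we have $\bar S \subset \hat S$, and using $\rho < r/(4K)$ one can pick a point $y \in \hat S \cap \partial\bar S$ that is not in the $\rho$-neighborhood of any other small square of $X_i^K$---for instance, a point at $\ell^\infty$-distance exactly $\rho$ from $S$ sitting in the corridor between $S$ and an adjacent small square. By \eqref{sqrcond1} together with $x, y \in \hat S$,
\[
D_{h_L}(x,y) \;\leq\; \zeta\,\mu_{h_L}\bigl(z_i+[-r,r]^2\bigr) \;\leq\; \zeta \eta .
\]

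Finally, along any $D_{h_L}$-geodesic from $x$ to $y$ the bump $\vphi_N$ is bounded pointwise by a constant $L := \max_j \log(C_j/a_j)$: the individual bumps $\vphi_{N,r,\rho}^j$ for distinct indices $j$ can be arranged to have pairwise disjoint supports (by spacing the $z_j$ so that the $2\rho$-neighborhoods of the squares $z_j + [-r,r]^2$ are pairwise disjoint), and each such bump is bounded pointwise by $\log(C_j/a_j)$ by the definition \eqref{phidef}. Consequently, the weighted length of the geodesic is at most $e^{\xi L/\gamma}\,\zeta\,\eta$, which is less than $\bar\e$ once $\zeta$ is taken sufficiently small---recall that $\zeta$ is a free small parameter in the construction. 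The one subtlety is that the $D_{h_L}$-geodesic from $x$ to $y$ need not remain inside $\hat S$, which is precisely why one needs the \emph{global} upper bound on $\vphi_N$ rather than any local bound inside $\hat S$.
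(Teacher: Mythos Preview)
Your proof is correct and, in fact, cleaner than the paper's own argument. Both proofs proceed by bounding $\vphi_N$ pointwise from above and then bounding the underlying length to $\partial(X_i^K+[-\rho,\rho]^2)$; the difference lies in how the second step is carried out. The paper takes a path $P$ of \emph{Euclidean} length at most $r/(2K)$ and asserts $\ell_{e^{\xi\vphi_N/\gamma}\cdot D_{h_L}}(P)\le |C_i/a_i|\cdot r/(2K)$, which appears to identify the $D_{h_L}$-length of $P$ with its Euclidean length (and also drops the exponent $\xi/\gamma$ on the Weyl factor). Your route via \eqref{sqrcond1} sidesteps this: you bound the $D_{h_L}$-diameter of the reference square $\hat S$ directly, which is exactly what \eqref{sqrcond1} was set up to provide, and your remark that the $D_{h_L}$-geodesic need not stay in $\hat S$ (hence the global bound on $\vphi_N$) is a point the paper does not address.

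One small point to tighten: your final bound $e^{\xi L/\gamma}\zeta\eta$ involves $L=\max_j\log(C_j/a_j)$, and the $a_j$ are selected only \emph{after} $\zeta$ in the construction, so one cannot literally choose $\zeta$ in terms of $L$. The fix is to retain the sharper inequality $D_{h_L}(x,y)\le \zeta\,\mu_{h_L}(z_i+[-r,r]^2)$ from \eqref{sqrcond1} rather than passing immediately to $\zeta\eta$, and combine it with \eqref{sqrcond2} and \eqref{achoice} (which give $a_i\gtrsim \mu_{h_L}(z_i+[-r,r]^2)/4$) to obtain
\[
\Bigl(\tfrac{C_i}{a_i}\Bigr)^{\xi/\gamma}\,\zeta\,\mu_{h_L}\bigl(z_i+[-r,r]^2\bigr)
\;\lesssim\; C_i^{\xi/\gamma}\,\zeta\,\mu_{h_L}\bigl(z_i+[-r,r]^2\bigr)^{\,1-\xi/\gamma}
\;\le\; C_i^{\xi/\gamma}\,\zeta\,\eta^{\,1-\xi/\gamma},
\]
using $\xi/\gamma=1/d_\gamma<1$. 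The right-hand side now involves only quantities fixed before $\zeta$, so it is legitimately $<\bar\e$ once $\zeta$ is small; this is the same cancellation that the paper is implicitly relying on when it writes $|C_i/a_i|\le 4|C_i|/\mu_{h_L}(z_i+[-r,r]^2)$.
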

\begin{proof}
By \eqref{sqrcond2}, we have that
\[
\left\vert \frac{C_i}{a_i}\right\vert \leq \frac{2^d\vert 
C_i\vert}{\mu_{h}(z_i+[-r,r]^d)}.
\]
Therefore if $P$ is any path between $x$ and $\p X_i^K+[-\rho,\rho]^2$ of Euclidean length at most $\frac{r}{2K},$ we note that
\[
e^{\frac{\xi}{\gamma}\vphi_N}\cdot D_{h}(x, \p X_i^K+[-\rho,\rho]^d) \leq \ell_{e^{\frac{\xi}{\gamma}\vphi_N}\cdot D_{h}}(P) \leq \left\vert \frac{C_i}{a_i}\right\vert \frac{r}{2K} \leq \frac{2^d\vert 
C_i\vert}{\mu_{h}(z_i+[-r,r]^d)} \frac{r}{2K}.
\]
Now choosing $K$ large enough so that
\[
\frac{r}{2K} \leq \frac{\mu_{h}(z_i+[-r,r]^d)}{2^d\vert   
C_i\vert} \bar{\e}
\]
we conclude.
\end{proof}

Now we will prove Lemma \ref{smallsqrdiam}.
\begin{proof}[Proof of Lemma \ref{smallsqrdiam}]
Now suppose that $x,y \in z_i+[-r-\rho,r+\rho]^d.$ Then by the triangle inequality,
\[
D_{e^{\frac{\xi}{\gamma}\vphi_N} \cdot D_{h}}(x,y) \leq \mathfrak{d}_x+ \mathfrak{d}_y+ D_{e^{\frac{\xi}{\gamma}\vphi_N}\cdot D_{h}}(\bar{x},\bar{y}),
\]
where for any point $z \in z_i+[-r-\rho,r+\rho]^d,$ we define
\[
\mathfrak{d}_z:=
\left\{
\begin{matrix}
e^{\frac{\xi}{\gamma}\vphi_N}\cdot D_{h}(z, \p (X_i^K+[-\rho,\rho]^d)) & \mbox{ if } z \in X_i^K+[-\rho,\rho]^d,\\
0 & \mbox{ otherwise,}
\end{matrix}
\right.
\]
and where $\bar{z}$ is defined to be such that $e^{\frac{\xi}{\gamma}\vphi_N}\cdot D_{h}(z, \p (X_i^K+[-\rho,\rho]^d)) = e^{\frac{\xi}{\gamma}\vphi_N}\cdot D_{h}(z,\bar{z})$ if $z \in X_i^K+[-\rho,\rho]^d,$ and defined as $z$ otherwise (see Figures \ref{barz} and \ref{bothbars}).

%%%%%%%%%%%%%%%%%%%%%%%%%%%%%%%%%%%%%%%%%%%%
\begin{figure}
\centering
\begin{minipage}{0.4\textwidth}
  \centering
  \includegraphics[width=0.9\textwidth]{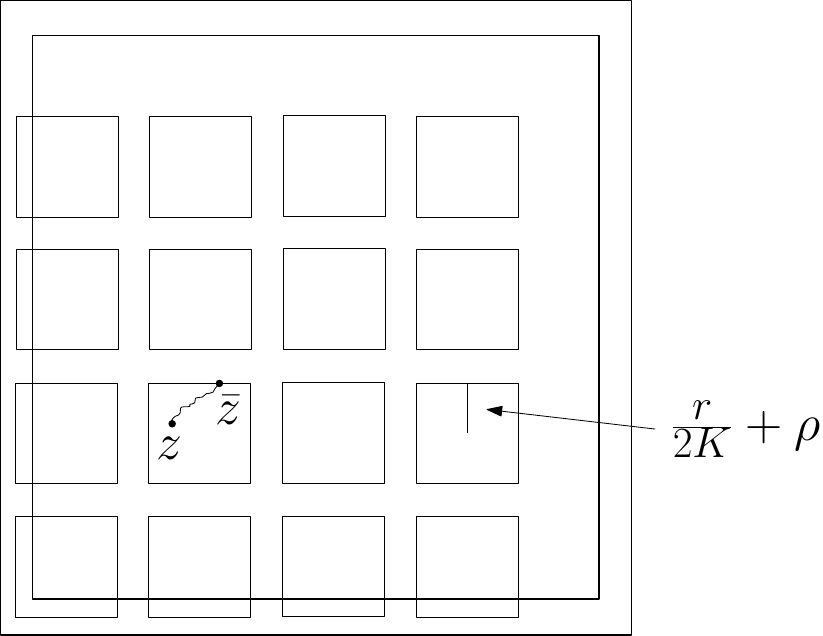}
  \caption{\label{barz} Definition of $\bar{z}$}
\end{minipage}
\begin{minipage}{0.4\textwidth}
  \centering
  \includegraphics[width=0.9\linewidth]{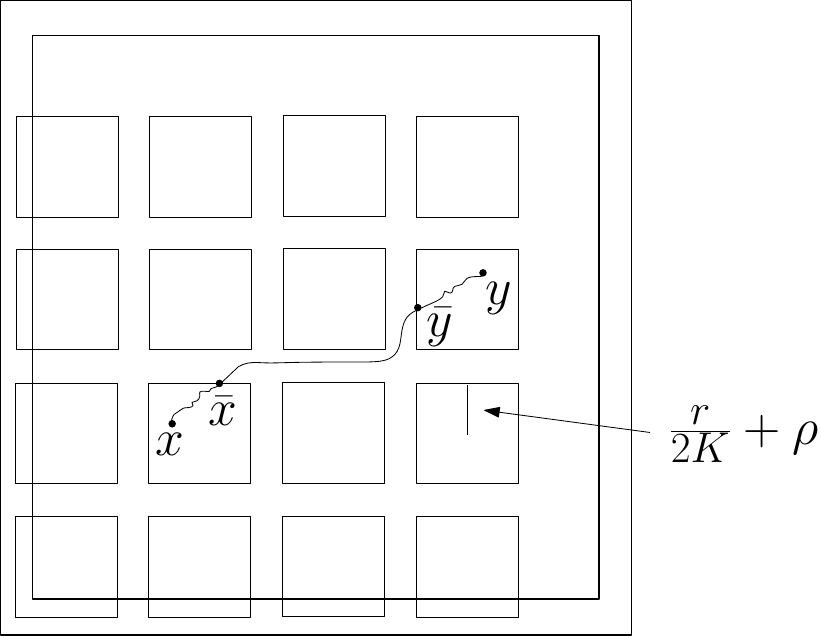}
  \caption{\label{bothbars} Triangle inequality argument}
\end{minipage}
\end{figure}
%%%%%%%%%%%%%%%%%%%%%%%%%%%%%%%%%%%%%%%%%%%%

By Lemma \ref{thing}, for small enough $K$ we have that $\mathfrak{d}_x,\mathfrak{d}_y \leq \bar{\e}.$ Additionally, we note that by taking a path $P$ between $\bar{x},\bar{y}$ not intersecting $X_i^K+[-\rho,\rho]^d$ of $D_{h}$-length at most $M,$
\[
e^{\frac{\xi}{\gamma}\vphi_N}\cdot D_{h}(\bar{x},\bar{y}) \leq e^{-\frac{\xi}{\gamma}N}M,
\]
and so for a small enough choice of $\bar{\e}$ and $N$ large enough so that $e^{-\frac{\xi}{\gamma}N}M \leq \bar{\eta}$ we conclude that
\[
e^{\frac{\xi}{\gamma}\vphi_N}\cdot D_{h}(x,y) \leq 2 \bar{\e}+ e^{-N}r \leq \bar{\eta}.
\]
This completes the proof.
\end{proof}

\bibliography{ref}

\newcommand{\etalchar}[1]{$^{#1}$}
\begin{thebibliography}{DKRV16}

\bibitem[AA21]{aa-odd-angulation}
L.~{Addario-Berry} and M.~{Albenque}.
\newblock {Convergence of odd-angulations via symmetrization of labeled trees}.
\newblock {\em {A}nnales {H}enri {L}ebesgue}, 4:653--683, 2021, \arxiv{1904.04786}.

\bibitem[ABA17]{ab-simple}
L.~Addario-Berry and M.~Albenque.
\newblock The scaling limit of random simple triangulations and random simple quadrangulations.
\newblock {\em Ann. Probab.}, 45(5):2767--2825, 2017, \arxiv{1306.5227}. \MR{3706731}

\bibitem[AHS17]{ahs-sphere}
J.~Aru, Y.~Huang, and X.~Sun.
\newblock Two perspectives of the 2{D} unit area quantum sphere and their equivalence.
\newblock {\em Comm. Math. Phys.}, 356(1):261--283, 2017, \arxiv{1512.06190}. \MR{3694028}

\bibitem[BBI01]{bbi-metric-geometry(dupe)}
D.~Burago, Y.~Burago, and S.~Ivanov.
\newblock {\em A course in metric geometry}, volume~33 of {\em Graduate Studies in Mathematics}.
\newblock American Mathematical Society, Providence, RI, 2001. \MR{1835418}

\bibitem[BD91]{otherboivin}
D.~Boivin and Y.~Derriennic.
\newblock The ergodic theorem for additive cocycles of {$\mathbb{Z}^d$} or {$\mathbb{R}^d$}.
\newblock {\em Ergodic Theory and Dynamical Systems}, 11(1):19--39, 1991.

\bibitem[Ber17]{berestycki-gmt-elementary}
N.~Berestycki.
\newblock An elementary approach to {G}aussian multiplicative chaos.
\newblock {\em Electron. Commun. Probab.}, 22:Paper No. 27, 12, 2017, \arxiv{1506.09113}. \MR{3652040}

\bibitem[BG22]{bg-harmonic-ball}
A.~{Bou-Rabee} and E.~{Gwynne}.
\newblock {Harmonic balls in Liouville quantum gravity}.
\newblock {\em ArXiv e-prints}, August 2022, \arxiv{2208.11795}.

\bibitem[BG24]{bg-rw-sphere-packing}
A.~{Bou-Rabee} and E.~{Gwynne}.
\newblock {Random walk on sphere packings and Delaunay triangulations in arbitrary dimension}.
\newblock {\em ArXiv e-prints}, May 2024, \arxiv{2405.11673}.

\bibitem[BJM14]{bjm-uniform}
J.~Bettinelli, E.~Jacob, and G.~Miermont.
\newblock The scaling limit of uniform random plane maps, {\it via} the {A}mbj\o rn-{B}udd bijection.
\newblock {\em Electron. J. Probab.}, 19:no. 74, 16, 2014, 1312.5842. \MR{3256874}

\bibitem[Boi90]{indepperc}
D.~Boivin.
\newblock First passage percolation: the stationary case.
\newblock {\em Probability theory and related fields}, 86(4):491--499, 1990.

\bibitem[BP]{bp-lqg-notes}
N.~{Berestycki} and E.~{Powell}.
\newblock {G}aussian {f}ree {f}ield, {L}iouville {q}uantum {g}ravity, and {G}aussian multiplicative chaos.
\newblock {A}vailable at \url{https://homepage.univie.ac.at/nathanael.berestycki/Articles/master.pdf}.

\bibitem[BRG22]{harmonicballs}
A.~Bou-Rabee and E.~Gwynne.
\newblock Harmonic balls in liouville quantum gravity.
\newblock {\em arXiv preprint arXiv:2208.11795}, 2022.

\bibitem[Cer22]{cercle-higher-dimension}
B.~Cercl\'{e}.
\newblock Liouville conformal field theory on even-dimensional spheres.
\newblock {\em J. Math. Phys.}, 63(1):Paper No. 012301, 25, 2022, \arxiv{1912.09219}. \MR{4367620}

\bibitem[CHG24]{appendix}
A.~Contreras~Hip and E.~Gwynne.
\newblock Approximation of length metrics by conformally flat riemannian metrics.
\newblock {\em ArXiv e-prints}, 2024.

\bibitem[DDDF20]{dddf-lfpp}
J.~Ding, J.~Dub\'{e}dat, A.~Dunlap, and H.~Falconet.
\newblock Tightness of {L}iouville first passage percolation for {$\gamma \in (0,2)$}.
\newblock {\em Publ. Math. Inst. Hautes \'{E}tudes Sci.}, 132:353--403, 2020, \arxiv{1904.08021}. \MR{4179836}

\bibitem[DDG21]{ddg-metric-survey}
J.~{Ding}, J.~{Dubedat}, and E.~{Gwynne}.
\newblock {Introduction to the Liouville quantum gravity metric}.
\newblock {\em ArXiv e-prints}, September 2021, \arxiv{2109.01252}.

\bibitem[DF20]{df-lqg-metric}
J.~Dub\'{e}dat and H.~Falconet.
\newblock Liouville metric of star-scale invariant fields: tails and {W}eyl scaling.
\newblock {\em Probab. Theory Related Fields}, 176(1-2):293--352, 2020, \arxiv{1809.02607}. \MR{4055191}

\bibitem[DFG{\etalchar{+}}20]{lqg-metric-estimates}
J.~Dub\'{e}dat, H.~Falconet, E.~Gwynne, J.~Pfeffer, and X.~Sun.
\newblock Weak {LQG} metrics and {L}iouville first passage percolation.
\newblock {\em Probab. Theory Related Fields}, 178(1-2):369--436, 2020, \arxiv{1905.00380}. \MR{4146541}

\bibitem[DG18]{dg-lqg-dim}
J.~{Ding} and E.~{Gwynne}.
\newblock {The fractal dimension of {L}iouville quantum gravity: universality, monotonicity, and bounds}.
\newblock {\em {C}ommunications in {M}athematical {P}hysics}, 374:1877--1934, 2018, \arxiv{1807.01072}.

\bibitem[DG19]{ding-goswami-watabiki}
J.~Ding and S.~Goswami.
\newblock Upper bounds on {L}iouville first-passage percolation and {W}atabiki's prediction.
\newblock {\em Comm. Pure Appl. Math.}, 72(11):2331--2384, 2019, \arxiv{1610.09998}. \MR{4011862}

\bibitem[DGZ23]{dgz-exponential-metric}
J.~{Ding}, E.~{Gwynne}, and Z.~{Zhuang}.
\newblock {Tightness of exponential metrics for log-correlated Gaussian fields in arbitrary dimension}.
\newblock {\em ArXiv e-prints}, October 2023, \arxiv{2310.03996}.

\bibitem[DHKS21]{dhks-even-dim}
L.~{Dello Schiavo}, R.~{Herry}, E.~{Kopfer}, and K.-T. {Sturm}.
\newblock {Conformally invariant random fields, quantum Liouville measures, and random Paneitz operators on Riemannian manifolds of even dimension}.
\newblock {\em ArXiv e-prints}, May 2021, \arxiv{2105.13925}.

\bibitem[DHKS23]{dhks-discrete-to-cont}
L.~{Dello Schiavo}, R.~{Herry}, E.~{Kopfer}, and K.-T. {Sturm}.
\newblock {Polyharmonic Fields and Liouville Quantum Gravity Measures on Tori of Arbitrary Dimension: from Discrete to Continuous}.
\newblock {\em ArXiv e-prints}, February 2023, \arxiv{2302.02963}.

\bibitem[DKRV16]{dkrv-lqg-sphere}
F.~David, A.~Kupiainen, R.~Rhodes, and V.~Vargas.
\newblock Liouville quantum gravity on the {R}iemann sphere.
\newblock {\em Comm. Math. Phys.}, 342(3):869--907, 2016, \arxiv{1410.7318}. \MR{3465434}

\bibitem[DMS21]{wedges}
B.~Duplantier, J.~Miller, and S.~Sheffield.
\newblock Liouville quantum gravity as a mating of trees.
\newblock {\em Ast\'{e}risque}, 427(427):viii+257, 2021, \arxiv{1409.7055}. \MR{4340069}

\bibitem[DRSV17]{lgf-survey}
B.~Duplantier, R.~Rhodes, S.~Sheffield, and V.~Vargas.
\newblock Log-correlated {G}aussian fields: an overview.
\newblock In {\em Geometry, analysis and probability}, volume 310 of {\em Progr. Math.}, pages 191--216. Birkh\"{a}user/Springer, Cham, 2017, \arxiv{1407.5605}. \MR{3821928}

\bibitem[DRV16]{drv-torus}
F.~David, R.~Rhodes, and V.~Vargas.
\newblock Liouville quantum gravity on complex tori.
\newblock {\em J. Math. Phys.}, 57(2):022302, 25, 2016, \arxiv{1504.00625}. \MR{3450564}

\bibitem[DZZ19]{dzz-heat-kernel}
J.~Ding, O.~Zeitouni, and F.~Zhang.
\newblock Heat kernel for {L}iouville {B}rownian motion and {L}iouville graph distance.
\newblock {\em Comm. Math. Phys.}, 371(2):561--618, 2019, \arxiv{1807.00422}. \MR{4019914}

\bibitem[GM20a]{gm-confluence}
E.~Gwynne and J.~Miller.
\newblock Confluence of geodesics in {L}iouville quantum gravity for {$\gamma \in (0,2)$}.
\newblock {\em Ann. Probab.}, 48(4):1861--1901, 2020, \arxiv{1905.00381}. \MR{4124527}

\bibitem[GM20b]{local-metrics}
E.~Gwynne and J.~Miller.
\newblock Local metrics of the {G}aussian free field.
\newblock {\em Ann. Inst. Fourier (Grenoble)}, 70(5):2049--2075, 2020, \arxiv{1905.00379}. \MR{4245606}

\bibitem[GM21a]{uniquenessoflqg}
E.~Gwynne and J.~Miller.
\newblock Existence and uniqueness of the liouville quantum gravity metric for $\gamma \in(0, 2)$.
\newblock {\em Inventiones mathematicae}, 223(1):213--333, 2021.

\bibitem[GM21b]{gm-uniqueness}
E.~Gwynne and J.~Miller.
\newblock Existence and uniqueness of the {L}iouville quantum gravity metric for {$\gamma\in(0,2)$}.
\newblock {\em Invent. Math.}, 223(1):213--333, 2021, \arxiv{1905.00383}. \MR{4199443}

\bibitem[GP19]{gp-kpz}
E.~{Gwynne} and J.~{Pfeffer}.
\newblock {KPZ formulas for the Liouville quantum gravity metric}.
\newblock {\em {T}ransactions of the {A}merican {M}athematical {S}ociety}, to appear, 2019.

\bibitem[GRV19]{grv-higher-genus}
C.~Guillarmou, R.~Rhodes, and V.~Vargas.
\newblock Polyakov's formulation of {$2d$} bosonic string theory.
\newblock {\em Publ. Math. Inst. Hautes \'{E}tudes Sci.}, 130:111--185, 2019, \arxiv{1607.08467}. \MR{4028515}

\bibitem[GS22]{gs-lqg-minkowski}
E.~Gwynne and J.~Sung.
\newblock {T}he {M}inkowski content measure of the {L}iouville quantum gravity metric.
\newblock In preparation, 2022.

\bibitem[HS23]{hs-cardy-embedding}
N.~Holden and X.~Sun.
\newblock Convergence of uniform triangulations under the {C}ardy embedding.
\newblock {\em Acta Math.}, 230(1):93--203, 2023, \arxiv{1905.13207}. \MR{4567714}

\bibitem[JSW18]{imaginarymultchaos}
J.~Junnila, E.~Saksman, and C.~Webb.
\newblock Imaginary multiplicative chaos: Moments, regularity and connections to the ising model. preprint.
\newblock {\em arXiv preprint arXiv:1806.02118}, 2018.

\bibitem[Kah85]{kahane}
J.-P. Kahane.
\newblock Sur le chaos multiplicatif.
\newblock {\em Ann. Sci. Math. Qu\'ebec}, 9(2):105--150, 1985. \MR{829798 (88h:60099a)}

\bibitem[Kin68]{kingman}
J.~F. Kingman.
\newblock The ergodic theory of subadditive stochastic processes.
\newblock {\em Journal of the Royal Statistical Society: Series B (Methodological)}, 30(3):499--510, 1968.

\bibitem[{Le }13]{legall-uniqueness}
J.-F. {Le Gall}.
\newblock Uniqueness and universality of the {B}rownian map.
\newblock {\em Ann. Probab.}, 41(4):2880--2960, 2013, \arxiv{1105.4842}. \MR{3112934}

\bibitem[LG14]{legall-sphere-survey}
J.-F. Le~Gall.
\newblock Random geometry on the sphere.
\newblock In {\em Proceedings of the {I}nternational {C}ongress of {M}athematicians---{S}eoul 2014. {V}ol. 1}, pages 421--442. Kyung Moon Sa, Seoul, 2014, \arxiv{1403.7943}. \MR{3728478}

\bibitem[LSSW16]{fgf-survey}
A.~Lodhia, S.~Sheffield, X.~Sun, and S.~S. Watson.
\newblock Fractional {G}aussian fields: a survey.
\newblock {\em Probab. Surv.}, 13:1--56, 2016, \arxiv{1407.5598}. \MR{3466837}

\bibitem[Mar22]{marzouk-degree-sequence}
C.~Marzouk.
\newblock On scaling limits of random trees and maps with a prescribed degree sequence.
\newblock {\em Ann. H. Lebesgue}, 5:317--386, 2022, \arxiv{1903.06138}. \MR{4443293}

\bibitem[Mie13]{miermont-brownian-map}
G.~Miermont.
\newblock The {B}rownian map is the scaling limit of uniform random plane quadrangulations.
\newblock {\em Acta Math.}, 210(2):319--401, 2013, \arxiv{1104.1606}. \MR{3070569}

\bibitem[MS16]{brownianliouville3}
J.~Miller and S.~Sheffield.
\newblock Liouville quantum gravity and the brownian map iii: the conformal structure is determined.
\newblock {\em arXiv preprint arXiv:1608.05391}, 2016.

\bibitem[MS20a]{brownianliouville1}
J.~Miller and S.~Sheffield.
\newblock Liouville quantum gravity and the brownian map i: the $\mathrm qle(8/3, 0)$ metric.
\newblock {\em Inventiones mathematicae}, 219(1):75--152, 2020.

\bibitem[MS20b]{lqg-tbm1}
J.~Miller and S.~Sheffield.
\newblock Liouville quantum gravity and the {B}rownian map {I}: the {${\mathrm QLE}(8/3,0)$} metric.
\newblock {\em Invent. Math.}, 219(1):75--152, 2020, \arxiv{1507.00719}. \MR{4050102}

\bibitem[MS21a]{lqg-tbm2}
J.~Miller and S.~Sheffield.
\newblock Liouville quantum gravity and the {B}rownian map {II}: {G}eodesics and continuity of the embedding.
\newblock {\em Ann. Probab.}, 49(6):2732--2829, 2021, \arxiv{1605.03563}. \MR{4348679}

\bibitem[MS21b]{brownianliouville2}
J.~Miller and S.~Sheffield.
\newblock Liouville quantum gravity and the brownian map ii: geodesics and continuity of the embedding.
\newblock {\em The Annals of Probability}, 49(6):2732--2829, 2021.

\bibitem[MW17]{miller-wu-dim}
J.~Miller and H.~Wu.
\newblock Intersections of {SLE} {P}aths: the double and cut point dimension of {SLE}.
\newblock {\em Probab. Theory Related Fields}, 167(1-2):45--105, 2017, \arxiv{1303.4725}. \MR{3602842}

\bibitem[Pol81]{polyakov-qg1}
A.~M. Polyakov.
\newblock Quantum geometry of bosonic strings.
\newblock {\em Phys. Lett. B}, 103(3):207--210, 1981. \MR{623209 (84h:81093a)}

\bibitem[Rem18]{remy-annulus}
G.~Remy.
\newblock Liouville quantum gravity on the annulus.
\newblock {\em J. Math. Phys.}, 59(8):082303, 26, 2018, \arxiv{1711.06547}. \MR{3843631}

\bibitem[RV14]{rhodes-vargas-review}
R.~Rhodes and V.~Vargas.
\newblock Gaussian multiplicative chaos and applications: {A} review.
\newblock {\em Probab. Surv.}, 11:315--392, 2014, \arxiv{1305.6221}. \MR{3274356}

\bibitem[She07]{shef-gff}
S.~Sheffield.
\newblock Gaussian free fields for mathematicians.
\newblock {\em Probab. Theory Related Fields}, 139(3-4):521--541, 2007, \arxiv{math/0312099}. \MR{2322706 (2008d:60120)}

\bibitem[WP20]{pw-gff-notes}
W.~{Werner} and E.~{Powell}.
\newblock {Lecture notes on the Gaussian Free Field}.
\newblock {\em ArXiv e-prints}, April 2020, \arxiv{2004.04720}.

\end{thebibliography}
\bibliographystyle{hmralphaabbrv}

\end{document}